\documentclass[12pt]{amsart}

\usepackage[margin = 1in]{geometry}
\usepackage{amsfonts, amsmath,amscd, amssymb, cite, color, latexsym, mathrsfs, mathtools,
slashed, stmaryrd, verbatim, wasysym }
\usepackage[all]{xy}
\usepackage[pdftex]{graphicx}
\usepackage{hyperref}
\usepackage[applemac]{inputenc}


\newtheorem{theorem}{Theorem}

\newtheorem{lemma}[theorem]{Lemma}
\newtheorem{proposition}[theorem]{Proposition}
\newtheorem{remark}{Remark}

\numberwithin{equation}{section}
\numberwithin{theorem}{section}

\let\oldsqrt\sqrt
\def\sqrt{\mathpalette\DHLhksqrt}
\def\DHLhksqrt#1#2{%
\setbox0=\hbox{$#1\oldsqrt{#2\,}$}\dimen0=\ht0
\advance\dimen0-0.2\ht0
\setbox2=\hbox{\vrule height\ht0 depth -\dimen0}%
{\box0\lower0.4pt\box2}}


\newcommand{\df}[1]{\mathfrak{#1}}

\renewcommand{\bar}{\overline}
\renewcommand{\hat}[1]{\widehat{#1}}

\newcommand{\wt}[1]{\widetilde{#1}}
\newcommand{\rest}[1]{\big\rvert_{#1}} 



\newcommand\lra{\longrightarrow}

\newcommand\pa{\partial}

\newcommand\eps\varepsilon
\renewcommand\epsilon\varepsilon






\newcommand\CI{{\mathcal{C}}^{\infty}}


\newcommand\ang[1]{\langle #1 \rangle}
\newcommand\floor[1]{\lfloor #1 \rfloor}
\newcommand{\lrpar}[1]{\left( #1 \right)}
\newcommand{\lrspar}[1]{\left[ #1 \right]}

\newcommand{\norm}[1]{\lVert #1 \rVert}




\renewcommand\det{\operatorname{det}}

\newcommand\dvol{\operatorname{dvol}}

\DeclareMathOperator*{\FP}{\operatorname{FP}}


\renewcommand\Re{\operatorname{Re}}

\newcommand\Ric{\operatorname{Ric}}

\newcommand\scal{\mathrm{scal}}

\newcommand\Vol{\operatorname{Vol}}
\newcommand\Weyl{\operatorname{Weyl}}

\newcommand\Mand{\text{ and }}
\newcommand\Mas{\text{ as }}

\newcommand\Mev{\operatorname{even}}

\newcommand\Mforall{\text{ for all }}
\newcommand\Mforany{\text{ for any }}

\newcommand\Mif{\text{ if }}

\newcommand\Modd{\operatorname{odd}}

\newcommand\Motherwise{\text{ otherwise }}

\newcommand\Mwhere{\text{ where }}
\newcommand\Mwith{\text{ with }}

\newcommand\paperintro%
        {%
         }
\newcommand\paperbody%
        {%
         }

\newcommand\bA{\mathbf{A}}
\newcommand\bB{\mathbf{B}}

\newcommand\bbB{\mathbb{B}}

\newcommand\bbN{\mathbb{N}}

\newcommand\bbR{\mathbb{R}}
\newcommand\bbS{\mathbb{S}}

\newcommand\cB{\mathcal{B}}
\newcommand\cC{\mathcal{C}}
\newcommand\cD{\mathcal{D}}
\newcommand\cE{\mathcal{E}}

\newcommand\cG{\mathcal{G}}

\newcommand\cL{\mathcal{L}}
\newcommand\cM{\mathcal{M}}

\newcommand\cO{\mathcal{O}}

\newcommand\cR{\mathcal{R}}
\newcommand\cS{\mathcal{S}}

\newcommand\cU{\mathcal{U}}

\newcommand\sB{\mathscr{B}}
\newcommand\sC{\mathscr{C}}
\newcommand\sD{\mathscr{D}}
\newcommand\sE{\mathscr{E}}

\newcommand\sO{\mathscr{O}}

\newcommand\sQ{\mathscr{Q}}


\DeclareMathAlphabet{\mathpzc}{OT1}{pzc}{m}{it}

\hyphenation{mezzo-perversities mezzo-perversity}


\title{Poincar\'e-Lovelock metrics on conformally compact manifolds}
\author{Pierre Albin}
\address{University of Illinois, Urbana-Champaign}
\email{palbin@illinois.edu}

\begin{document}

\begin{abstract}
An important tool in the study of conformal geometry, and the AdS/CFT correspondence in physics, is the Fefferman-Graham expansion of conformally compact Einstein metrics.
We show that conformally compact metrics satisfying a generalization of the Einstein equation, Poincar\'e-Lovelock metrics, also have Fefferman-Graham expansions. Moreover we show that conformal classes of metrics that are near that of the round metric on the $n$-sphere have fillings into the ball satisfying the Lovelock equation, extending the existence result of Graham-Lee for Einstein metrics.
\end{abstract}

\maketitle

\paperintro
\section*{Introduction}

The purpose of this paper is to show that an important part of the theory developed for Poincar\'e-Einstein metrics, metrics that are conformally compact and Einstein, holds also for Poincar\'e-Lovelock metrics, metrics that are conformally compact and Lovelock. Specifically we show that Poincar\'e-Lovelock metrics with sufficient boundary regularity on arbitrary manifolds have an asymptotic expansion identical in form to that of Poincar\'e-Einstein metrics and that conformal classes of metrics on the sphere sufficiently close to that of the round metric can be filled in with Poincar\'e-Lovelock metrics.\\

The local invariants of a Riemannian manifold are easy to write down. Weyl's invariant theory identifies them with the contractions of the Riemann curvature tensor and its covariant derivatives. On the other hand local scalar invariants of a conformal structure are less readily accessible. Inspired by the tight connection between the Riemannian geometry of hyperbolic space and the conformal geometry of the round sphere, the Fefferman-Graham \cite{Fefferman-Graham:Conf, Fefferman-Graham:Ambient} `ambient construction' seeks to invariantly associate to a manifold with a conformal structure another manifold with a Riemannian structure. Conformal invariants of the former are then obtained from Riemannian invariants of the latter,

A Riemannian manifold $(M,g)$ is conformally compact if $M$ is the interior of a manifold with boundary $\bar M$ and for some, hence any, non-negative function $x \in \CI(\bar M)$ that  vanishes simply and exactly at $\pa M,$ $x^2 g$ is a metric on $\bar M.$ The metric on $\pa M$ obtained by restricting $x^2g$ to $\pa M$ depends on the choice of $x,$ but different choices yield metrics in the same conformal class, the `conformal infinity'  of $g$ \cite[Chapter 9]{Penrose-Rindler}. 
The problem posed in \cite{Fefferman-Graham:Conf} is, given a conformal class of metrics on $\pa M,$ find a conformally compact Einstein metric $g$ whose conformal infinity is the given conformal class. These `Poincar\'e-Einstein metrics' can, for appropriate choices of $x,$ be written near the boundary as $x^{-2}(dx^2 + h)$ where $h$ has an asymptotic expansion of the form
\begin{equation}\label{eq:FGExp}
	h \sim
	\begin{cases}
	h_0 + h_2 x^2 + (\text{even powers}) + h_{n-1}x^{n-1} + h_{n}x^n + \ldots & \Mif n \Modd\\
	h_0 + h_2 x^2 + (\text{even powers}) + h_{n,1}x^{n} \log x + h_{n}x^n + \ldots & \Mif n \Mev
	\end{cases}
\end{equation}
with $n= \dim \pa M.$ (These `appropriate' $x$ are known as {\em special} boundary defining functions.)

The choice of $x$ determines a metric $h_0$ in the conformal infinity and Riemannian invariants that do not depend on such a choice are invariants of the conformal class of $h_0.$ An important example is the renormalized volume,
\begin{equation}\label{eq:RenVol}
	{}^{R}\Vol(M) = \FP_{s=0} \int_M x^s \; \dvol_g = \FP_{\eps=0} \int_{ \{x \geq \eps\} } \; \dvol_g,
\end{equation}
which for $n$ odd is independent of the choice of special boundary defining function used in its definition, while for $n$ even its dependence on $x$ is mediated through the term $h_{n,1}$ in the expansion of the metric.

The importance of the renormalized volume is that it plays a prominent r\^ole in the Anti-de-Sitter / Conformal Field Theory (briefly AdS/CFT) correspondence. This is a proposed duality \cite{Maldacena} between a quantum gravity theory in the interior of a manifold and a conformal field theory on the boundary. This duality was clarified in \cite{Gubser-Klebanov-Polyakov}, \cite{Witten:AdS} as an equivalence of partition functions and the renormalized volume shows up as the partition function of the gravity theory. The dependence on the choice of boundary defining function was shown to match the expected conformal anomaly of the conformal field theory on the boundary   when $n=2$ or $n=4$ \cite{Henningson-Skenderis}.\\

A natural generalization arises from recalling that in four dimensions the only natural tensors on Riemannian manifolds that are symmetric, built up from the metric and its first two derivatives, and divergence-free are linear combinations of the metric and its Einstein tensor,
\begin{equation*}
	a g_{ij} + b E_{ij}(g), \quad
	E_{ij}(g) = \Ric(g)_{ij} -\frac{\scal(g)}2 g_{ij}.
\end{equation*}
Indeed, this is one of the motivations for the form of the field equations of gravity in general relativity.
It was shown by Lovelock \cite{Lovelock} that in dimension $m,$ the space of tensors satisfying these properties has dimension $\floor{\frac m2}$ (though only the metric and the Einstein tensor are {\em linear} in the second derivatives of the metric).
Generators for the other tensors are given by
\begin{multline*}
	E^{(2q)}_{ij}(g) = \Ric^{(2q)}_{ij} - \frac{ \scal^{(2q)}(g) }{2q} g_{ij} 
	\Mwhere
	\Ric^{(2q)}_{ij} = \delta^{\alpha_1 \alpha_2 \cdots \alpha_{2q}}_{i \beta_2 \cdots \beta_{2q}} 
	R_{\alpha_1\alpha_2j}^{\beta_2}
	R_{\alpha_3\alpha_4}^{\beta_3\beta_4} \cdots
	R_{\alpha_{2q-1}\alpha_{2q}}^{\beta_{2q-1}\beta_{2q}}, \\
	\scal^{(2q)}(g) = g^{st}\Ric^{(2q)}_{st}, \Mand
	\delta^{\alpha_1\cdots \alpha_{2q}}_{\beta_1\cdots \beta_{2q}} = \det( (\delta^{\alpha_i}_{\beta_j}) ).
\end{multline*}

\begin{remark}
For locally conformally flat metrics, we have
\begin{equation*}
	\mathrm{scal}^{(2q)}(g) = 
	\sigma_{2q}(g^{-1}P(g)),
\end{equation*}
the $(2q)^{\text{th}}$ elementary symmetric function of the eigenvalues of the Schouten tensor of $g,$ see \eqref{eq:SigmaKh} and Remark \ref{rmk:Effectives}.
\end{remark}

Divergence-free symmetric two tensors natural in the metric and its first two derivatives are known as generalized Einstein tensors, or Lovelock tensors. 
We will refer to a metric that is conformally compact and satisfies an equation of the form
\begin{equation}\label{eq:IntroLovelock}
	\sum \alpha_q E^{(2q)}_{ij}(g) = \lambda g_{ij},
\end{equation}
as a Poincar\'e-Lovelock metric.
For our purposes the particular values of the coefficients will be immaterial as long as they satisfy a single linear restriction.

For a fixed $n\geq 3,$ and any choice of scalars $\alpha = (\alpha_1, \ldots, \alpha_{\floor{\tfrac {n+1}2}})$ let
\begin{equation*}
	\lambda(\alpha) 
	= \sum \alpha_q \lrpar{-\frac12}^q \frac{n!(2q)!}{(n-2q+1)!} 
	= \sum \alpha_q \lambda^{(2q)},
\end{equation*}
chosen so that \eqref{eq:IntroLovelock} holds with $\lambda=\lambda(\alpha)$ for a hyperbolic metric.
Let $\mathrm{LimSec}(\alpha)$ be the set of $\kappa > 0$ such that
\begin{equation*}
\begin{gathered}
	\sum \alpha_q \lrpar{-\frac\kappa2}^q \frac{n!(2q)!}{(n-2q+1)!}
	= \lambda(\alpha),\\
	\bA_1(\alpha, \kappa) 
	= \sum \alpha_q \lrpar{-\frac\kappa2}^{q-1} \frac{(n-2)!}{2}\frac{(2q)!}{(n-2q)!} \neq 0.
\end{gathered}
\end{equation*}
The usual Einstein equation corresponds to $\alpha = (1,0,\ldots, 0),$ $\lambda(\alpha) = -n,$ $\mathrm{LimSec}(\alpha) = \{1\}.$
In general the number of elements in $\mathrm{LimSec}(\alpha)$ can be any number in $\{0,\ldots, \floor{\tfrac {n+1}2}\},$ but if the signs of the $\alpha_i$ alternate then $\mathrm{LimSec}(\alpha) = \{1\}.$ 
We will assume that $\mathrm{LimSec}(\alpha)\neq\emptyset.$

\begin{theorem}\label{thm:FefGrExp}
Let $X$ be an $n$-dimensional closed manifold, $n\geq 3$ with a conformal class of Riemannian metrics $\df c,$ and fix $\alpha$ such that $\mathrm{LimSec}(\alpha)\neq \emptyset.$\\

a)
Choose a locally constant function $\bar\kappa: X \lra \mathrm{LimSec}(\alpha).$
Let $N = n-2$ if $n$ is even and $N= \infty$ if $n$ is odd.
There is a conformally compact Riemannian metric $g$ on $X \times [0,1]_x$ with conformal boundary $X \times \{0\},$ whose sectional curvatures converge to $-\bar\kappa$ as $x\to 0,$ which is even modulo $\cO(x^{N+2})$ and asymptotically satisfies the Lovelock equation
\begin{equation*}
	\sum \alpha_q E^{(2q)}_{ij}(g) = \lambda(\alpha) g_{ij} + \cO(x^{N}).
\end{equation*}
Moreover, $g$ is unique modulo $\cO(x^{N})$ up to a diffeomorphism fixing $X \times \{0\}.$

For any Riemannian metric $h_0$ in the conformal class $\df c$ there is a boundary defining function $x$ for which $g$ takes the form $x^{-2}(\bar\kappa^{-1}dx^2+h(x))$ with $h(0) =h_0$ and the tensors $\{\pa_x^i h(0)\}_{i \in \{0,\ldots, N+1\}}$ are formally determined by $h_0.$\\

b)
Assume that $(M,g)$ is a conformally compact manifold with conformal boundary $(X, \df c),$ and $g$ satisfies the Lovelock equation
\begin{equation*}
	\sum \alpha_q E^{(2q)}_{ij}(g) = \lambda(\alpha) g_{ij}.
\end{equation*}
Then the sectional curvatures of $M$ converge to $-\bar\kappa,$ with $\bar\kappa:X \lra \mathrm{LimSec}(\alpha)$ a locally constant function, and we can find $x$ such that $g$ has the form $x^{-2}(\bar\kappa^{-1}dx^2+h_x)$ near the boundary.
Moreover, if $g$ has sufficient boundary regularity,\\
i) $h$ has an expansion of the form \eqref{eq:FGExp} where the tensors $h_k$ for $k<n,$ and $h_{n,1}$ if $n$ is even, are formally determined by $h_0.$ \\
ii) The tensor $h_n$ is not formally determined by $h_0;$ if $n$ is odd $h_n$ is trace free, if $n$ is even its trace is formally determined by $h_0.$\\
iii) if $n$ is odd $h_n$ is divergence free, if $n$ is even its divergence is formally determined by $h_0.$\\
In any case all of the tensors in the expansion are formally determined by  $h_0$ and $h_n.$

\end{theorem}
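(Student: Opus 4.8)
The plan is to run, for both parts, the same order-by-order argument used in the Fefferman--Graham treatment of the Einstein case, the one genuinely new ingredient being the analysis of the Lovelock tensor in a normal form near the boundary. First I would fix a representative $h_0\in\df c$ and, exactly as for Poincar\'e--Einstein metrics, produce a \emph{special} boundary defining function: the condition $|dx|^2_{x^2 g}=\bar\kappa$ near $X\times\{0\}$ is a non-characteristic first order (eikonal) equation whose solution is uniquely fixed by the prescribed boundary metric $h_0$, and in the resulting gauge $g=x^{-2}(\bar\kappa^{-1}dx^2+h(x))$ with $h(0)=h_0$. In part (a) this form is simply posited for the metric to be built; in part (b), where $g$ is assumed only to be conformally compact and (eventually) to have the stipulated boundary regularity, one first extracts the limiting sectional curvature and then applies the same construction.

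The heart of the matter---and the step I expect to be the main obstacle---is the computation of $\sum\alpha_q E^{(2q)}_{ij}(g)-\lambda(\alpha)g_{ij}$ for $g$ in this normal form. Writing $\bar g=x^2g=\bar\kappa^{-1}dx^2+h(x)$, the curvature of the conformally related metric $g$ splits into a radial piece, equal to $-\bar\kappa$ plus corrections linear in $\partial_x h$ and $\partial_x^2 h$, and a tangential piece, equal to $-\bar\kappa$ plus corrections built from $\Ric(h(x))$ and $(\partial_x h)^2$. Feeding this into the generalized Kronecker delta symbols defining $E^{(2q)}$ and sorting by powers of $x$, two scalars turn out to control everything: at leading order the equation reduces to $\sum\alpha_q\lrpar{-\tfrac{\bar\kappa}{2}}^q\tfrac{n!(2q)!}{(n-2q+1)!}=\lambda(\alpha)$, the first defining relation of $\mathrm{LimSec}(\alpha)$; and the terms linear in $\partial_x^2 h$ (with the $\partial_x h$ terms the structure forces to accompany them) appear with coefficient exactly $\bA_1(\alpha,\bar\kappa)$, so that the $ij$-component of the equation takes the schematic form
\begin{equation*}
	\bA_1(\alpha,\bar\kappa)\lrpar{x\,\partial_x^2 h_{ij}-(n-1)\partial_x h_{ij}}+(\text{trace and lower-order terms})=0.
\end{equation*}
Dividing by $\bA_1(\alpha,\bar\kappa)\neq 0$ places us in precisely the Einstein situation: the indicial operator on trace-free symmetric $2$-tensors has roots $0$ and $n$ (the case $\alpha=(1,0,\dots)$ gives $\bA_1\equiv 1$ and recovers the classical computation). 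This is exactly why one must restrict to $\mathrm{LimSec}(\alpha)$: in part (b) the leading terms force the limiting sectional curvature to be locally constant and to satisfy the first relation above, and inspection of the indicial equation shows the metric cannot be conformally compact to the required order unless $\bA_1$ is nonzero there, so that $\bar\kappa$ is valued in $\mathrm{LimSec}(\alpha)$.

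With the indicial structure in hand the recursion is routine. Expanding $h(x)=\sum_k h_k x^k$, the equation, collected at the order where $h_k$ first appears, reads $\bA_1(\alpha,\bar\kappa)\,k(k-n)\,h_k=P_k$, with $P_k$ a universal polynomial expression in $h_0,\dots,h_{k-1}$ and the curvature of $h_0$. For $0<k<n$ the factor $k(k-n)\neq 0$ determines $h_k$, while invariance of the whole setup under $x\mapsto-x$ for even $h$ forces $h_k=0$ when $k$ is odd and $k<n$; this yields the even expansion through order $N+1$. At the critical order $k=n$ the factor vanishes: for $n$ odd the obstruction $P_n$ vanishes and the trace-free part of $h_n$ is left free---part (a) then takes $h_n=0$, and the $x\mapsto-x$ symmetry makes the resulting formal solution even to all orders, as claimed for $N=\infty$---whereas for $n$ even $P_n$ need not vanish, a term $h_{n,1}x^n\log x$ must be introduced, and $h_{n,1}$ is pinned down by $h_0$ so as to cancel $P_n$, again leaving only the trace-free part of $h_n$ free. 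For $k>n$ the recursion determines $h_k$ in terms of $h_0$ and $h_n$. Uniqueness in (a), modulo $\cO(x^N)$ and up to a boundary-fixing diffeomorphism, is then just the statement that each $h_k$ with $k\le N+1$ is forced once $h_0$ is fixed, the only residual freedom being the choice of special boundary defining function.

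Finally, the trace and divergence assertions for $h_n$ come from the remaining components of the equation. Since each $E^{(2q)}$ is divergence-free by construction, so is $\sum\alpha_q E^{(2q)}$, and the contracted Bianchi identity propagates the $xx$- and $xj$-components of $\sum\alpha_q E^{(2q)}_{ij}(g)=\lambda(\alpha)g_{ij}$ inward from the boundary as constraint equations. Tracking these to order $x^n$ shows that the $h_0$-trace of $h_n$ and its divergence are formally determined by $h_0$---vanishing identically when $n$ is odd, given by explicit $h_0$-expressions when $n$ is even---while the trace-free, divergence-free part of $h_n$ is the single piece of Cauchy data not fixed by $h_0$. Combined with the recursion, this shows that every coefficient in the expansion \eqref{eq:FGExp} is a universal expression in $h_0$ and $h_n$, which is the theorem's last assertion.
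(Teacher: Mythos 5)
Your proposal is correct and follows essentially the same route as the paper: normal form via a special boundary defining function, identification of $\bA_1(\alpha,\bar\kappa)$ as the coefficient of the indicial operator (the paper organizes the curvature computation with Kulkarni's double forms rather than generalized Kronecker deltas, but that is only bookkeeping), the order-by-order recursion with critical exponent $n$ and separate treatment of the trace, the parity argument, the $\log$ term for $n$ even, and the contracted Bianchi identity propagating the off-diagonal components to obtain the trace/divergence constraints on $h_n$. The one place you claim slightly more than the paper establishes is the assertion that conformal compactness by itself forces $\bA_1(\alpha,\bar\kappa)\neq 0$; the paper instead builds this nondegeneracy into the definition of $\mathrm{LimSec}(\alpha)$ and explicitly declines to analyze the case $\bA_1(\alpha,\kappa)=0$.
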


\begin{remark}
If $\alpha = (1,0,\ldots,0)$ then the Lovelock equation is the Einstein equation and this theorem is the usual Fefferman-Graham expansion. In this case the boundary regularity of $g$ in (b) is shown in \cite{Chrusciel-Delay-Lee-Skinner}.
\end{remark}

It turns out that the formal determination of the asymptotic expansion of a conformally compact metric holds for a larger family of curvature equations, obtained by modifying the trace of the Lovelock tensors,
\begin{equation*}
	F_g(\alpha,\beta) 
	= \sum \alpha_q (\Ric^{(2q)}(g)  - \lambda^{(2q)}g) + \beta_q (\scal^{(2q)}(g)-(n+1)\lambda^{(2q)})g = 0,
\end{equation*}
which reduces to the Lovelock equation above if $\beta_q = -\frac{\alpha_q}{2q}$ for all $q.$

\begin{theorem}\label{thm:FefGrExp2}
Parts (b)(i) and (b)(ii) of Theorem \ref{thm:FefGrExp} hold for metrics satisfying $F_g(\alpha, \beta)=0$ as long as $\alpha \neq -(n+1)\beta$ and $\mathrm{LimSec}(\alpha, \beta) \neq\emptyset.$
\end{theorem}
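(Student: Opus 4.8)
\noindent\emph{Proof proposal.}
The plan is to reduce Theorem~\ref{thm:FefGrExp2} to the portion of the proof of Theorem~\ref{thm:FefGrExp} already in hand, exploiting that the extra term in $F_g(\alpha,\beta)$ relative to the Lovelock tensors is \emph{pure trace}. Fix a metric $h_0$ in the conformal class and put $g$ in the normal form $x^{-2}(\bar\kappa^{-1}dx^2+h_x)$; the reduction to this form, and the local constancy of the limiting sectional curvature $\bar\kappa$, proceed exactly as in Theorem~\ref{thm:FefGrExp}(b), since they depend only on the leading--order/indicial behavior of the equation, which is controlled by the hypothesis $\mathrm{LimSec}(\alpha,\beta)\neq\emptyset$ in the same way that $\mathrm{LimSec}(\alpha)\neq\emptyset$ controlled it before. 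Now decompose the symmetric two--tensor $F_g(\alpha,\beta)$ into its $h_x$--trace-free part and its $h_x$--trace, so that $F_g(\alpha,\beta)=0$ is equivalent to the pair of equations obtained from these two pieces, and run the Fefferman--Graham recursion on each piece.

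For the trace-free equation: since $-\alpha_q\lambda^{(2q)}g$ and $\beta_q(\scal^{(2q)}(g)-(n+1)\lambda^{(2q)})g$ are multiples of $g$, and $E^{(2q)}(g)$ differs from $\Ric^{(2q)}(g)$ by a multiple of $g$, the trace-free part of $F_g(\alpha,\beta)$ coincides with the trace-free part of $\sum\alpha_q E^{(2q)}(g)-\lambda(\alpha)g$. Hence the recursion determining the trace-free parts of the coefficients $h_k$, the appearance of the obstruction tensor $h_{n,1}$ when $n$ is even, and the fact that the trace-free part of $h_n$ is unconstrained are all governed by the identical computation already carried out for the Lovelock equation --- nowhere in that half of the argument did one use the special value $\beta_q=-\alpha_q/(2q)$. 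This yields at once the ``trace-free halves'' of (b)(i) and (b)(ii).

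For the trace equation: taking the $g$--trace of $F_g(\alpha,\beta)$ produces a scalar combination of the $\scal^{(2q)}(g)$ which, because $\alpha\neq-(n+1)\beta$, is a \emph{nontrivial} equation of the same shape as the trace of the Lovelock equation. I would then rerun the order--by--order analysis of this one scalar equation. As before, at order $x^k$ it has the form $c_k(\alpha,\beta,\bar\kappa)\,\tr_{h_0}h_k=P_k$, where the right-hand side $P_k$ is a polynomial expression in $h_0,\ldots,h_{k-1}$, their covariant derivatives, and the already--determined trace-free part of $h_k$; the content is that the normalization constant $c_k$ is nonzero for $0<k<n$ and for the trace of $h_n$ when $n$ is even, while for $n$ odd there is simply no trace equation at order $n$, so $h_n$ is trace free. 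Here the role of the $\bA_1(\alpha,\kappa)\neq0$ clause in the definition of $\mathrm{LimSec}(\alpha)$ is played by the corresponding non-degeneracy clause in the definition of $\mathrm{LimSec}(\alpha,\beta)$, nonempty by hypothesis. Combining with the trace-free analysis gives (b)(i) --- all $h_k$ with $k<n$, together with $h_{n,1}$ for $n$ even, are formally determined by $h_0$ --- and (b)(ii).

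The main obstacle is bookkeeping, not a new idea. One has to re-enter the proof of Theorem~\ref{thm:FefGrExp} and verify that the recursion there genuinely splits along the trace / trace-free decomposition in a \emph{triangular} manner: the nonlinear cross-terms coupling $\tr_{h_0}h_k$ to the trace-free part of $h_k$ (and conversely) must involve only lower--order, already--determined data, so that the two recursions can be solved in tandem up through order $n$; and one must confirm $c_k\neq0$ for $0<k<n$ and every $\bar\kappa\in\mathrm{LimSec}(\alpha,\beta)$. It should also be remarked that part (b)(iii) is deliberately \emph{not} asserted: the divergence constraints on $h_n$ in the Lovelock case come from the contracted Bianchi identity $\nabla^i E^{(2q)}_{ij}(g)=0$, and $F_g(\alpha,\beta)$ is not divergence free once $\beta_q\neq-\alpha_q/(2q)$, so that conclusion has no analogue here.
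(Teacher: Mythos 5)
Your two structural observations are both correct and both match the paper: the $\beta$-terms in $F_g(\alpha,\beta)$ are pure trace, so the $g$-trace-free content of the equation is $\beta$-independent, and part (b)(iii) genuinely has no analogue because the contracted Bianchi identity (Lemma \ref{lem:DivFree}) is only available when $\beta_q=-\alpha_q/(2q)$. Note, though, that the paper does not prove Theorem \ref{thm:FefGrExp2} by reducing to the Lovelock case: the whole recursion of \S\ref{sec:FG} is run for general $(\alpha,\beta)$ from the outset, and Theorem \ref{thm:FefGrExp}(b)(i),(ii) is the specialization, not the other way around. More importantly, the decomposition the paper actually uses is not trace/trace-free but the splitting along $\ang{\pa_x}\oplus\ang{\pa_x}^{\perp}$: at each order the $dx\otimes dx$ coefficient and the $h$-trace of the tangential part give \emph{two} scalar equations for $\sC_h(h^{(k+1)})$, with coefficients $(1-k)\bA_1+2(n-k)\bB_{1,2}$ and $(2n-1-k)\bA_1+2n(n-k)\bB_{1,2}$ in \eqref{eq:PLk}, and the point is that these cannot both vanish when $\bA_1(\alpha,\kappa)\neq0$ (a weighted difference equals $(n-1)(k+1)\bA_1$).

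The gap in your proposal is precisely that you replace this pair by the single scalar equation $\sC_g(F_g(\alpha,\beta))=0$ and assert $c_k\neq 0$. That assertion does not follow from the hypotheses. The $g$-trace adds $\kappa$ times the first of the two coefficients above to the second, giving $2\kappa(n-k)\lrpar{\bA_1(\alpha,\kappa)+(n+1)\bB_{1,2}(\alpha,\beta,\kappa)}$, and a short computation shows $\bA_1(\alpha,\kappa)+(n+1)\bB_{1,2}(\alpha,\beta,\kappa)=-\tfrac1n p'(\kappa)$ where $p(\kappa)=\sum(\alpha_q+(n+1)\beta_q)\lambda^{(2q)}\kappa^q$ is the polynomial whose equation $p(\kappa)=p(1)$ defines the admissible limiting curvatures. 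If $\kappa$ is a multiple root of $p(\kappa)=p(1)$ then $c_k=0$ for \emph{every} $k$, and this is perfectly compatible with $\bA_1(\alpha,\kappa)\neq0$, since $p$ depends only on $\alpha+(n+1)\beta$ while $\bA_1$ depends only on $\alpha$ (so for $n\geq 3$ one can prescribe them independently). Thus your scalar trace recursion can be vacuous, and the traces $\sC_{h_0}(h_k)$ are in general \emph{not} recoverable from $\sC_g(F_g(\alpha,\beta))=0$. The missing information sits in the normal--normal component: equivalently, the $dx\otimes dx$ component of the $g$-trace-free part of $F_g(\alpha,\beta)$ carries the coefficient $-\tfrac{(n-1)(k+1)}{2(n+1)}\bA_1(\alpha,\kappa)$ on $\sC_h(h^{(k+1)})$, which never vanishes. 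So the traces are determined by the very piece you assigned to the ``trace-free half,'' and your proposed split is not triangular in the direction you need; the repair is exactly the paper's two-equation argument. (A smaller slip: at order $n$ with $n$ odd it is the trace-free coefficient $(n-1-k)\bA_1$ that degenerates; the trace of $h_n$ is still forced to vanish by an order-$n$ equation together with parity, not by the absence of a trace equation.)
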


If $(n+1)>4$ and $g$ is a solution of $F_g(\alpha, \beta)=0,$ such as a Poincar\'e-Lovelock metric, then 
\begin{equation*}
	g = x^{-2}(\kappa^{-1}dx^2+h_0 + h_2x^2 + h_4 x^4 + \cO(x^5)).
\end{equation*}
We determine the tensors $h_2$ and $h_4$ below in \S\ref{sec:FirstCouple}.
The tensor $h_2$ is always a multiple of the Schouten tensor of $h_0,$
\begin{equation*}
	h_2 = -\frac1{\kappa} P(h_0),
\end{equation*}
while the tensor $h_4$ is more complicated, 
\begin{multline*}
	h_4 
	= 
	-\frac1{(n-4)}
	\Big( 
	-\frac{h_0\sC_h(\dot{\Ric})}{4\kappa(n-1)}
	+h_0(\tfrac14 \sC_{h_0}^2(h_2^2)
	- \tfrac12 \sC_{h_0}(h_2)^2)
	- \sC_{h_0}(h_2^2) 
	+2 h_2\sC_{h_0}(h_2) 
	+\frac{\dot{\Ric}}{\kappa}
	\Big) \\
	-\frac{\bB_{1,2}(\alpha, \beta, \kappa)h_0}{(n-4) \bA_1(\alpha, \kappa)}	
	\Big(
	\tfrac12 (2n-5) \sC_{h_0}^2(h_2^2)
	- 2(n-2)\sC_{h_0}(h_2)^2 
	+ \frac{\dot{\scal}}{\kappa} \Big)\\
	+2(n-3)
	\Big(
	-\frac14\sC_{h_0}^2(h_2^2) + \frac12\sC_{h_0}(h_2)^2 
	-\frac1{4\kappa(n-1)}\sC_h(\dot{\Ric})
	-\frac{\bA_3(\alpha, \kappa)}{4\kappa(n-1)\bA_1(\alpha, \kappa)}\sC_{h_0}^4(\Weyl_{h_0}^2)
	 \Big) \\
	 \\
	-\frac{
	4(n-1)\bA_3(\alpha, \kappa) \sC_{h_0}^3( \Weyl_{h_0}^2) 
	+ (4(n-1) \bB_{3,4}(\alpha, \beta, \kappa)-\bA_3(\alpha, \kappa))
		h_0\sC_{h_0}^4( \Weyl_{h_0}^2 ) 
	}{4\kappa(n-1)(n-4)\bA_1(\alpha, \kappa)} 
\end{multline*} 
where
\begin{equation*}
\begin{gathered}
	\dot{\Ric} 
	= \frac12\Delta_{L,h_0}(h_2) - \delta_{h_0}^*(\delta_{h_0}h_2) - \frac12\mathrm{Hess}_{h_0}(\sC_{h_0}(h_2)),\\
	\dot{\scal} 
	= \sC_{h_0}(\dot{\Ric})
	+\tfrac12\sC_{h_0}^2(\Ric \owedge h_2)-\sC_{h_0}(h_2)\scal,
\end{gathered}
\end{equation*}
and we are using the double form formalism reviewed in \S\ref{sec:DoubleForms}, and functions of $\alpha,$ $\beta$ specified in \S\ref{sec:FG}.\\

An advantage of the Poincar\'e-Lovelock metrics over other solutions of  $F_g(\alpha, \beta)=0$ is that the former are guaranteed to exist, at least on the ball, by the following analogue of \cite[Theorem A]{Graham-Lee}.

\begin{theorem}
Let $M = \bbB^{n+1},$ $n\geq 4,$ ${\df h}$ the hyperbolic metric on $M$ and $\hat {\df h} = \rho^2{\df h}|_{\{\rho=0\}}$ the round metric on $\bbS^n = \pa M.$ Let $\alpha$ be such that $\mathrm{LimSec}(\alpha)\neq\emptyset.$

For any smooth Riemannian metric $\hat g$ on $\bbS^{n}$ which, for some $\theta>0,$ is sufficiently close in $\cC^{2,\theta}(M, \cS^2(M))$ to $\hat {\df h}$ there is a metric $g \in \cC^{\infty}(M, \cS^2(M))\cap \rho^{-2}\cC^{0}(\bar M, \cS^2(M))$ satisfying 
\begin{equation*}
	\begin{cases}
	\sum \alpha_q E^{(2q)}_{ij}(g) = \lambda(\alpha) g_{ij}, \\
	x^2 g\rest{\pa M} \text{ is conformal to } \hat g.
	\end{cases}
\end{equation*}
\end{theorem}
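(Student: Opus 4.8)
The plan is to adapt the implicit-function argument of Graham-Lee \cite{Graham-Lee} for Poincar\'e-Einstein metrics, taking advantage of the fact, established in the computations of \S\ref{sec:DoubleForms} and \S\ref{sec:FG}, that at a constant-curvature model the linearization of the Lovelock tensor is a non-zero scalar multiple of the linearization of the Einstein tensor. Since $\bbS^n$ is connected, fix a single $\kappa \in \mathrm{LimSec}(\alpha)$ and take as model $g_0 = \kappa^{-1}{\df h}$, the metric of constant sectional curvature $-\kappa$ on $M = \bbB^{n+1}$; by the discussion preceding Theorem \ref{thm:FefGrExp} its conformal infinity is the round class $[\hat{\df h}]$ and $\sum \alpha_q E^{(2q)}_{ij}(g_0) = \lambda(\alpha)(g_0)_{ij}$. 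For $g$ near $g_0$ introduce the gauge-broken operator
\[
	\cL(g) = \sum \alpha_q E^{(2q)}_{ij}(g) - \lambda(\alpha)\, g_{ij} + \bigl(\delta_g^*\, \cB_{g_0}(g)\bigr)_{ij},
\]
with $\cB_{g_0}$ the Bianchi operator of the background $g_0$, exactly the gauge term of \cite{Graham-Lee}. Each $E^{(2q)}$ is a universal polynomial in $g$, $g^{-1}$ and the Riemann tensor of $g$ (with no covariant derivatives of curvature), so $\cL$ depends smoothly on $g$; with the Graham-Lee gauge term its linearization at $g_0$ is elliptic, and ellipticity persists for $g$ in a $\cC^{2,\theta}$-neighbourhood of $g_0$.

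Because $g_0$ has constant curvature, its Riemann tensor is parallel and an explicit algebraic expression in $g_0$; substituting this into the formula for $E^{(2q)}$ and differentiating, each contraction of the Lichnerowicz-type variation of the curvature against the remaining $q-1$ curvature factors collapses to a combinatorial multiple of the operator appearing in the linearization of $E^{(1)}$, and the lower-order terms recombine with the variation of $\lambda(\alpha)g$. As computed in \S\ref{sec:FG} (compare \S\ref{sec:FirstCouple}),
\[
	D\cL\rest{g_0} = \bA_1(\alpha,\kappa)\, P_0,
\]
where $P_0$ is precisely the linear operator Graham-Lee analyse for the rescaled hyperbolic metric and $\bA_1(\alpha,\kappa)\neq 0$ by the defining condition of $\mathrm{LimSec}(\alpha)$. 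Hence the indicial-root computation, the weighted H\"older mapping properties and the isomorphism theorem of \cite{Graham-Lee} transfer verbatim: on symmetric two-tensors $D\cL\rest{g_0}$ is an isomorphism from $\rho^{\delta}\cC^{2,\theta}(M,\cS^2(M))$ onto $\rho^{\delta}\cC^{0,\theta}(M,\cS^2(M))$ for $\delta$ in the Graham-Lee range.

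For $\hat g$ close to $\hat{\df h}$ in $\cC^{2,\theta}$, Theorem \ref{thm:FefGrExp}(a), applied to the order permitted by the regularity of $\hat g$, furnishes a metric $g_{\hat g}$ on $M$ with conformal infinity $[\hat g]$, depending smoothly on $\hat g$ and equal to $g_0$ when $\hat g = \hat{\df h}$, such that $\cL(g_{\hat g})$ lies in $\rho^{\delta}\cC^{0,\theta}$ and is small near $\hat{\df h}$. The map $\Psi(\hat g, u) = \cL(g_{\hat g}+u)$, defined near $(\hat{\df h},0)$ on $\cC^{2,\theta}(\bbS^n,\cS^2(\bbS^n)) \times \rho^{\delta}\cC^{2,\theta}(M,\cS^2(M))$, is smooth, vanishes at $(\hat{\df h},0)$, and has $D_u\Psi\rest{(\hat{\df h},0)} = \bA_1(\alpha,\kappa)P_0$ an isomorphism onto $\rho^{\delta}\cC^{0,\theta}$; the implicit function theorem produces, for each such $\hat g$, a correction $u(\hat g)\in\rho^{\delta}\cC^{2,\theta}$ with $\cL\bigl(g_{\hat g}+u(\hat g)\bigr)=0$. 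Applying the contracted second Bianchi identity to the divergence-free Lovelock tensors, $\cB_{g_0}(g)$ is then seen to satisfy a homogeneous second-order elliptic equation whose coefficients are close to those of the Einstein case, so the uniqueness argument of \cite{Graham-Lee} forces $\cB_{g_0}(g)=0$ and $g = g_{\hat g}+u(\hat g)$ genuinely satisfies the Lovelock equation with conformal infinity $[\hat g]$. Interior elliptic regularity for $\cL(g)=0$ gives $g\in\cC^{\infty}(M,\cS^2(M))$, and the structure of $g_{\hat g}$ together with the decay of $u$ gives $g\in\rho^{-2}\cC^{0}(\bar M,\cS^2(M))$ with $\rho^2 g\rest{\pa M}$ conformal to $\hat g$.

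The main obstacle, absent when $\alpha = (1,0,\ldots,0)$, is that $E^{(2q)}$ is fully nonlinear in the second derivatives of $g$ once $q\geq 2$: one must verify uniformly over the neighbourhood, and not merely at $g_0$, that (i) $\cL$ and $\Psi$ are smooth maps between the Graham-Lee weighted H\"older spaces at the same regularity thresholds, (ii) $\cL$ stays elliptic and $D\cL\rest{g}$ stays a small perturbation of $\bA_1(\alpha,\kappa)P_0$, and (iii) the Bianchi identity still yields an operator on $\cB_{g_0}(g)$ to which the vanishing argument applies. Each of these is immediate at $g_0$, and closeness in $\cC^{2,\theta}$ controls the relevant curvature quantities; obtaining this quantitative control is the crux of the argument.
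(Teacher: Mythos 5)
Your overall strategy is the one the paper follows: DeTurck-type gauge fixing, computation of the linearization at a constant-curvature background via the double-form identity $\cM_0^{(2q)}(r)=q\,\sC_{g_0}^{2q-1}((-\tfrac12 g_0^2)^{q-1}\dot{R_g})$, transfer of the Graham--Lee weighted H\"older isomorphism theorem using $\bA_1(\alpha,\kappa)\neq 0$, an implicit function theorem around an approximate solution built from the formal expansion, and removal of the gauge via the second Bianchi identity and a maximum principle (this is exactly the chain Lemma \ref{lem:LinCurv}--Lemma \ref{lem:Linear}, Proposition \ref{lem:GL211}, Theorem \ref{thm:GL41}, Lemma \ref{lem:GL22}). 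The one concrete gap is your choice of gauge term. You take $\delta_g^*\cB_{g_0}(g)$, ``exactly the gauge term of Graham--Lee,'' but that term is tailored to the Ricci tensor, whose linearization has non-elliptic part $-\delta_{g_0}^*\delta_{g_0}\cG^{(2)}_{g_0}(r)$ only. The Lovelock tensor $\cE^{(2q)}=\cR^{(2q)}-\tfrac1{2q}\scal^{(2q)}g$ carries a trace modification, and by \eqref{eq:LinE} its linearization contains, besides $\delta_{g_0}^*\delta_{g_0}\cG^{(2)}_{g_0}(r)$, the term $\tfrac12 g_0\,\delta_{g_0}\delta_{g_0}\cG^{(2)}_{g_0}(r)$. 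A gauge term of the form $c\,\delta_g^*\omega$ cannot cancel the latter; what is needed is $c\,(\delta_g^*+\tfrac12 g\delta_g)\omega = c\,\cG^{(2)}_g\delta_g^*\omega$ with $c=-\bA_1(\alpha,\kappa)$, which is the paper's $\Phi_{(\alpha,\beta)}$ of \eqref{eq:DefPhi} specialized to $\beta_q=-\alpha_q/(2q)$. With your gauge term the uncancelled $g_0\,\delta_{g_0}\delta_{g_0}\cG^{(2)}_{g_0}(r)$ contributes a second-order term to the pure-trace block, so $D\cL\rest{g_0}$ is \emph{not} $\bA_1(\alpha,\kappa)P_0$: the trace block is no longer a multiple of $\Delta+2n$, its indicial roots change, and the Graham--Lee isomorphism theorem cannot be quoted verbatim. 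The same $\cG^{(2)}$ is needed at the end: divergence-freeness of $\sum\alpha_q\cE^{(2q)}$ gives $\delta_g(\text{gauge term})=0$, and it is precisely $\delta_g\cG^{(2)}_g\delta_g^*\omega=0$, not $\delta_g\delta_g^*\omega=0$, that yields the Bochner inequality $\Delta_g|\omega|^2_g\leq 2K|\omega|^2_g$ to which \cite[Theorem 3.5]{Graham-Lee} applies.

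Two smaller remarks. First, your observation that the implicit function theorem only requires invertibility of the linearization at the background, not ellipticity of the nonlinear Lovelock operator on a whole neighbourhood, is correct and important (the introduction notes that the Lovelock equations are generally not elliptic even after gauge fixing); your parenthetical claim that ``ellipticity persists'' nearby is only needed, and only used, for the interior bootstrap to $\CI.$ Second, your normalization $g_0=\kappa^{-1}{\df h}$ for a chosen $\kappa\in\mathrm{LimSec}(\alpha)$ is a sensible way to guarantee $\bA_1(\alpha,\kappa)\neq0$ at the background; note that for $\beta_q=-\alpha_q/(2q)$ the paper's additional hypothesis $\sum q\lambda^{(2q)}(\alpha_q+(n+1)\beta_q)\neq0$ in Theorem \ref{thm:GL41} is proportional to $\bA_1(\alpha,\kappa)\neq 0,$ so you are not silently dropping a condition. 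Once the gauge term is corrected as above, your argument is the paper's.
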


The Lovelock equations are generally not elliptic, even after gauge-fixing, and hence can behave very differently to the Einstein equations. For example, the product of an $n$-dimensional Riemannian manifold and the $\ell$-dimensional flat torus satisfies $E^{(2q)}(g)=0$ whenever $2q>n,$ so that for many Lovelock equations the moduli space of solutions is infinite dimensional.
However it turns out that the linearization of the Lovelock equations at the hyperbolic metric on the ball is, as long as $\bA_1(\alpha, \kappa) \neq 0,$ essentially the same as the linearization of the Einstein equations. 

\begin{remark}
We do not explore the consequences of the Lovelock equations with $\bA_1(\alpha,\kappa)=0.$
The Lovelock equation in this case shows that the trace of $\pa_x h|_{x=0}$ vanishes but does not determine its trace-free part, while in the Graham-Lee argument for existence the vanishing of $\bA_1(\alpha,\kappa)$ implies the vanishing of the linearization of the Lovelock equations at a hyperbolic metric.
\end{remark}

There are many papers in the literature that discuss  modifications of the Einstein equation.
Recently, for example,  Alaee and Woolgar \cite{Alaee-Woolgar} consider asymptotically hyperbolic metrics satisfying the Bach equation in dimension four and a modification in higher dimensions and derive their formal power series expansions, while in \cite{Chernicoff:Q} the authors consider higher curvature theories of gravity whose actions are given by generalizations of Branson's Q-curvature.

In the context of the AdS/CFT correspondence, there is a systematic discussion of asymptotic expansions of solutions of higher derivative theories in three dimensional gravity in \cite{Skenderis-Taylor-vanRees}.
Four-dimensional theories are treated in, e.g., \cite{Smolic2013}.
The paper
\cite{ImbimboSchwimmerTheisenYankielowicz}
(cf. \cite{Skenderis:AAdS})
discusses how the coefficients of the expansion of a conformally compact metric are constrained by their behavior under conformal transformations regardless of the gravitational equation imposed (assuming that the expansion is smooth and that the gravitational expansion is satisfied by hyperbolic space). 
Note that Fefferman-Graham \cite[Proposition 3.5]{Fefferman-Graham:Ambient} show that for the Einstein equation only contractions of the Ricci curvature and its covariant derivatives show up, while, e.g.,  the expression for $h_4$ above shows that the Weyl curvature is involved in the expansion of solutions of general Lovelock equations.

We mention a few papers that are more specifically in the setting of Lovelock gravity in the AdS/CFT correspondence.
In \cite{Kofinas-Olea}
boundary terms consistent with the Lovelock action and AdS asymptotics are determined.
In \cite{deBoer2010}
the authors consider $AdS_7/CFT_6$ and explain how considerations in a conformal field theory hypothetically dual to a Lovelock theory restrict the physically meaningful values of the coupling constant vector $\alpha.$ This theme is also explored in \cite{Camanho2010} 
for cubic Lovelock gravity in arbitrary dimensions.
In \cite{Camanho2013} 
the authors point out that the inclusion of `higher curvature terms' allows for the description of more general conformal field theories.
In \cite{Aksteiner2016} the authors consider actions that are up to quadratic in the curvature and they identify specific values of the couplings for which the Lovelock equations do not determine the terms in the expansion of the metric; this seems to correspond to the condition $\bA_1(\alpha, \kappa)=0$ above. In {\em loc. cit.} the authors point out that in five dimensions this corresponds to `gravitational Chern-Simons theory'.

$ $\\
{\bf Consequences}\\
We briefly review some of the immediate consequences of Theorem \ref{thm:FefGrExp}; for a more complete survey of these consequences in the Einstein setting see, e.g., \cite{Djadli-Guillarmou-Herzlich}.

As mentioned above, if $(M,g)$ is a Poincar\'e-Einstein manifold then an important conformal invariant of its boundary is the renormalized volume \eqref{eq:RenVol}. In \cite{Albin:RenInt} it is shown that every scalar Riemannian invariant of $(M,g)$ has a renormalized integral that is independent of the choice of special boundary defining function used in its definition. As this only depends on the form of the Fefferman-Graham expansion it holds for all Poincar\'e-Lovelock metrics.

A particularly interesting example is the Pfaffian, the integrand of the Gauss-Bonnet theorem, for which we have \cite[Theorem 1.2]{Albin:RenInt}
\begin{equation*}
	{}^R\int_M \mathrm{Pff}\; \dvol_g = \chi(M).
\end{equation*}
It is natural to wonder if this is an index theorem but the relevant elliptic operator, the Gauss-Bonnet operator, $\eth_{GB},$ is shown {\em not} to be Fredholm on any conformally compact manifold in \cite{Mazzeo:Hodge}. 
Nevertheless a renormalized index is defined in \cite{Albin:RenInd} using renormalized integrals and shown to satisfy
\begin{equation*}
	{}^R\mathrm{ind}(\eth_{GB}) = {}^R\int_M \mathrm{Pff}\; \dvol_g.
\end{equation*}
Indeed a renormalized index theorem is proven for all Dirac-type operators on conformally compact manifolds.
The renormalized supertrace of the heat kernel is only guaranteed to be independent of the choice of special boundary defining function if the metric is even to order $n+1,$ so to one order greater than the general Poincar\'e-Lovelock metric.
(Most Dirac-type operators on conformally compact manifolds can not even be smoothly perturbed to be Fredholm \cite{Albin-Melrose:Fred1}.
An index formula for elliptic pseudodifferential operators on conformally compact manifolds that {\em are} Fredholm is established in \cite{Albin-Melrose:RelChern}.)

For any conformally compact metric $g,$ whose sectional curvatures converge to a locally constant function at $\pa M,$ the resolvent
\begin{equation*}
	R(s) = (\Delta - s(n-s))^{-1}
\end{equation*}
is constructed by Mazzeo and Melrose \cite{Mazzeo-Melrose:Zero} as an analytic family of bounded operators on $L^2$ for $\Re(s)>n.$ 
In {\em loc. cit.} they show that its Schwartz kernel extends as a meromorphic function to the complex plane minus a discrete set.
Guillop\'e and Zworski \cite{Guillope-Zworksi} showed that for a conformally compact metric with constant curvature near infinity the extension is to the whole complex plane. The general case was understood by Guillarmou \cite{Guillarmou:Mero} who showed that if the metric is even modulo $\cO(x^{2k+1})$ then the resolvent 
extends meromorphically to $\Re(s)>(n-2k-1)/2.$ (A different approach has subsequently been developed by Vasy \cite{Vasy:AH}.)
Thus for Poincar\'e-Einstein and Poincar\'e-Lovelock metrics the resolvent is a meromorphic function for $\Re(s)>0.$

Using the resolvent it can be shown that, given a function $f \in \CI(\pa M),$ and $s$ such that 
\begin{equation*}
	\Re(s)\geq n/2, \quad
	2s-n \notin \bbN_0,  \Mand 
	\text{$s(n-s)$ is not a pole of $R(s),$}
\end{equation*}
there is a unique solution of the equation $(\Delta-s(n-s))u=0$ of the form
\begin{equation*}
	u = x^{n-s} F(x,y) + x^{s}G(x,y)
\end{equation*}
with $F, G \in \CI(M)$ and $F(0,y) = f.$
The scattering matrix at energy $s,$ $S(s),$ is the map that sends $f$ to $G(0,y)$ \cite{Joshi-SaBarreto} and makes up a meromorphic family of pseudodifferential operators on $\pa M$ (cf. \cite[\S 5]{deHaroSkenderisSolodukhin}).
Graham and Zworski \cite{Graham-Zworski} show that an appropriate multiple of the residue of $S(s)$ at $s=n/2+k,$
\begin{equation*}
	P_k = (-1)^{k+1}(2^{2k}k!(k-1)!) \mathrm{Res}_{s=n/2+k} S(s)
\end{equation*}
(with $k \in \bbN,$ $k\leq n/2$ if $n$ is even, and under a generic assumption on $g$)
are conformally covariant self-adjoint differential operators on $\pa M$ whose principal part is the same as the $k^{\text{th}}$ power of the Laplacian $\Delta^k.$ 
They show \cite[\S 4]{Graham-Zworski} that these operators can also be obtained by formal power series arguments and coincide with the GJMS operators \cite{GJMS}.

Assuming now that $n$ is even,  it follows from the asymptotic expansion of the Laplacian that $P_{n/2}1=0$ so that $S(s)1$ does not have a pole at $s=n.$
The scalar Riemannian invariant
\begin{equation*}
	Q = (-1)^{n/2}(2^{n}(\tfrac n2)!(\tfrac n2-1)!) S(n)1
\end{equation*}
is known as Branson's Q-curvature. If we denote the Q-curvatures of $h$ and $\hat h = e^{2\Upsilon}h$ by $Q$ and $\hat Q$ respectively, these are related by
\begin{equation*}
	e^{n\Upsilon}\hat Q = Q + P_{n/2}\Upsilon.
\end{equation*}
The integral of Q-curvature is (thus) conformally invariant and Graham-Zworski show that if one writes
\begin{equation*}
	\Vol_g(\{x\geq \eps\}) = c_0 \eps^{-n} + c_2\eps^{-n+2} + \ldots + c_{n-2}\eps^{-2} + L \log (\tfrac1\eps) + {}^R\Vol(M) + o(1)
\end{equation*}
then $L$ is the integral of $2S(n)1,$ hence a multiple of the integral of Q-curvature.

In \cite{Fefferman-Graham:Q}, Fefferman and Graham make use of the work of \cite{Graham-Zworski} and define a Q-curvature in odd dimensions whose integral is a multiple of the renormalized volume. (In \cite{Chang-Qing-Yang} this is related to the Gauss-Bonnet theorem.)

The theorems in \cite{Graham-Zworski, Fefferman-Graham:Q} only make use of the Einstein equation through the form of the expansion of the metric \eqref{eq:FGExp} and so hold also for Poincar\'e-Lovelock metrics. Thus for each choice of $\alpha$ such that $\mathrm{LimSec}(\alpha) \neq \emptyset,$ there are GJMS operators with the same leading part and conformal covariance and there is a Q-curvature with the corresponding conformal transformation law whose integral appears in the asymptotic expansion of the volume.

$ $\\
The contents of the paper are as follows. In section \ref{sec:LovelockTensors} we discuss Lovelock tensors using the formalism of double forms. This was introduced by Kulkarni \cite{Kulkarni} and has recently been developed by Labbi \cite{Labbi:pq} -- \cite{Labbi:Weit}. In section \ref{sec:FG} we apply this formalism to find the formal asymptotic expansion of solutions to the equation $F_g(\alpha, \beta)=0$ mentioned above. This is analogous to the treatment of the Einstein equation in, e.g., \cite{Graham:Vol, Graham-Hirachi:Obst}. We then parallel \cite[\S6.9]{Juhl:Book} in \S\ref{sec:FirstCouple} to compute the first couple of non-zero tensors in the expansion of a Poincar\'e-Lovelock metric. In section \ref{sec:GL} we turn to the existence result. We follow \cite{DeLimaSantos:Defs} to compute the linearization of the gauge-fixed Lovelock equation and then use the results of \cite{Graham-Lee}.\\

{\bf Acknowledgements.}
This work was supported by NSF grant DMS-1711325. I am happy to acknowledge useful conversations and encouragement from Rafe Mazzeo, Robin Graham, Guofang Wang,  and especially Marika Taylor and Kostas Skenderis to whom I am indebted for pointers to the physics literature.
I am also grateful to the Banff International Research Station and the organizers of the workshop `Asymptotically Hyperbolic Manifolds' held in May 2018.  \\

\tableofcontents

\paperbody
\section{Lovelock tensors and double forms} \label{sec:LovelockTensors}

\subsection{Lovelock tensors} 

Certain problems in statistics (fitting regression equations non-linear in parameters) led Hotelling to pose the problem of determining the volume of a small tube around a manifold $\wt M$ embedded in $\bbR^N,$
\begin{equation*}
	\cB_{\eps}(\wt M) = \{ r \in \bbR^N: \text{ distance from $r$ to $\wt M$ is less than }\eps \}.
\end{equation*}
In 1937 Weyl attended a seminar where Hotelling gave a solution for submanifolds $\wt M$ of codimension one \cite{Hotelling} and the following year Weyl gave a solution for arbitrary codimension \cite{Weyl:Tubes}.
He showed that, for small $\eps,$ the volume of $\cB_{\eps}(\wt M)$ is a polynomial 
\begin{equation*}
	\Vol(\cB_{\eps}(\wt M)) = \Vol(\bbB_{\eps}^{N-m})
	\sum_{q=0}^{\floor{\tfrac m2}} \frac{\eps^{2q}}{(N- m + 2)(N- m + 4) \cdots (N- m + 2q)} 
	\lrspar{ \int_{\wt M} \wt\ell_{2q}(g) \; \mathrm{dVol}_{g} },
\end{equation*}
where $m = \dim \wt M,$ $\bbB_{\eps}^{N-m}$ denotes a ball of radius $\eps$ in $\bbR^{N-m},$ and
the coefficients are integral invariants of $\wt M$ with its induced Riemannian metric $g$---hence are independent of the particular embedding.
The integrands, $\wt\ell_{2q}(g),$ are known by many names, e.g., `Weyl volume-of-tube invariants', `Lipshitz-Killing curvatures', 
and `Lovelock scalars', the latter because they essentially coincide with the traces of the Lovelock tensors mentioned in the introduction,
\begin{equation*}
	\wt\ell_{2q}(g) = \frac{\scal^{(2q)}(g)}{(2q)!q!}.
\end{equation*}
The first few are given by
\begin{equation*}
	\wt\ell_0(g) = 1, \quad
	\wt\ell_2(g) = \frac12\scal, \quad
	\wt\ell_4(g) = \frac18(|R|^2-4|\Ric|^2 + \scal^2).
\end{equation*}
Another name for these invariants is `Gauss-Bonnet curvatures' as $\wt\ell_{2q}(g)$ is, after multiplying by $(2\pi)^q,$ the integrand of the Gauss-Bonnet theorem in dimension $2q,$ i.e., the $2q$-dimensional Pfaffian.
This observation was used by Allendoerfer and Weil in the original proof of the Gauss-Bonnet theorem \cite{Allendoerfer, Allendoerfer-Weil}. 

These invariants have connections to many topics in geometry and physics. They appear, for example, in Chern's kinematic formul\ae{} for quermassintegrals \cite{Chern:Kinematic}, Steiner's formula \cite[Chapter 10]{Gray:Tubes}, and an approach to lattice gravity \cite{Cheeger-Muller-Schrader:Lattice, Cheeger-Muller-Schrader:Curvature, Cheeger-Muller-Schrader:Kinematic}. 
For a modern discussion see the book \cite{Gray:Tubes}.\\

Just as each  $\scal^{(2q)}(g)$ is a generalization of the scalar curvature, the functional
\begin{equation*}
	g \mapsto  \int_{\wt M} \scal^{(2q)}(g) \; \mathrm{dVol}_{g}
\end{equation*}
generalizes the Einstein-Hilbert action and  its Euler-Lagrange derivative (after multiplying by $-q^{-1}$), $E^{(2q)}(g),$ known as the $(2q)-$Lovelock tensor, generalizes the Einstein tensor.
On a manifold of dimension $m$ the functions $\scal^{(2q)}(g)$ vanish identically if $2q>m$ (see \eqref{eq:DimVanish} below), while if $m$ is even the scalar $\scal^{(m)}(g)$ is essentially the Pfaffian of the curvature of $g$ and hence its Euler-Lagrange derivative is identically zero.

Directly from their definition, the Lovelock tensors are symmetric divergence-free $(0,2)$-tensors (e.g., \cite[Proposition 4.11]{Besse}) that only depend on the metric and its first two covariant derivatives (i.e., its curvature).
Lovelock \cite{Lovelock} showed that every $(0,2)$-tensor satisfying these properties is in the $\bbR$-span of $\{E^0(g), \ldots, E^{\floor{m/2}}(g)\},$ which is now known as the space of Lovelock tensors.

Lovelock tensors satisfy Schur's Lemma: if for some metric $g$ some non-zero $\bbR$-linear combination of the Lovelock tensors is equal to the product of a scalar function with the metric,
\begin{equation*}
	\sum \alpha_q E^{(2q)}_{ij}(g) = f g_{ij},
\end{equation*}
then that scalar function $f$ must be locally constant. We refer to such metrics as {\bf Lovelock metrics}.

\subsection{Double forms} \label{sec:DoubleForms}

The formalism of double forms studied by Kulkarni \cite{Kulkarni} is very convenient for analyzing Lovelock tensors and scalars.
It has recently been developed in various articles of Labbi \cite{Labbi:pq, Labbi:OnGB, Labbi:Var, Labbi:OnGr, Labbi:Rmks, Labbi:Weit}.

On a Riemannian manifold $(M,g)$ of dimension $m,$ an $(a,b)$-form is an element of
\begin{equation*}
	\Omega^{a \otimes b}(M) =
	\CI(M; \Lambda^a T^*M \otimes \Lambda^b T^*M),
\end{equation*}
and a double form is an element of the direct sum of the $(a,b)$-forms,
\begin{equation*}
	\Omega^{*\otimes *}(M) = \bigoplus_{a,b} \Omega^{a\otimes b}(M).
\end{equation*}
The wedge product induces a product on double forms by extending 
\begin{equation*}
	(\alpha \otimes \beta)(\gamma \otimes \delta) = (\alpha \wedge \gamma) \otimes (\beta \wedge \delta)
\end{equation*}
from simple tensors to all of $\Omega^{*\otimes*}(M)$ by linearity.
This is known as the Kulkarni-Nomizu product, is often denoted $\owedge,$ and satisfies
\begin{equation*}
	\omega \in \Omega^{p\otimes q}(M), \;
	\theta \in \Omega^{r\otimes s}(M) \implies
	\omega\theta = (-1)^{pr+qs}\theta\omega.
\end{equation*}
In particular multiplication in $\bigoplus_a \Omega^{a\otimes a}(M)$ is commutative.

An important operation on double forms is contraction
\begin{equation*}
	\sC_g: \Omega^{r\otimes s}(M) \lra \Omega^{(r-1) \otimes (s-1)}(M).
\end{equation*}
If $r=0$ or $s=0,$ we set $\sC_g\omega=0$ for every $\omega \in \Omega^{r\otimes s}(M).$
Otherwise, for any vector fields $V_1, \ldots, V_{r-1}$ and $W_1, \ldots, W_{s-1},$ we set
\begin{equation*}
	\sC_g\omega((V_1, \ldots, V_{r-1}), (W_1, \ldots, W_{s-1}))
	= \sum \omega((e_j, V_1, \ldots, V_{r-1}), (e_j,W_1, \ldots, W_{s-1}))
\end{equation*}
where the sum runs over a $g$-orthonormal basis of vector fields, $\{e_j\}.$

For example, if $\omega, \theta \in \Omega^{1\otimes 1}(M)$ are given in a local coordinates by
\begin{equation*}
	\omega = \omega_{a,b} \; \theta^a \otimes \theta^b, \quad
	\eta = \eta_{a,b} \; \theta^a \otimes \theta^b,
\end{equation*}
then we have
\begin{equation}\label{eq:Example}
\begin{gathered}
	(\omega\eta)_{s\sigma, t\tau}
	=(\omega \owedge \eta)_{s\sigma, t\tau}
	=  \omega_{s,t}\eta_{\sigma,\tau} - \omega_{s,\tau}\eta_{\sigma, t} 
	- \omega_{\sigma, t} \eta_{s, \tau} + \omega_{\sigma, \tau} \eta_{s,t}, \\
\begin{multlined}
	\sC_g(\omega  \eta)_{i,j}
	= g^{ab}(\omega_{a,b}\eta_{i,j} - \omega_{a,j}\eta_{i,b} - \omega_{i,b}\eta_{a,j} + \omega_{s,t}\eta_{a,b}) 
		\phantom{Filler space formatting}\\
	= \sC_g(\omega) \eta_{i,j} + \sC_g(\eta) \omega_{i,j} - g^{ab}(\omega_{a,j}\eta_{i,b}  + \omega_{i,b}\eta_{a,j}), 
\end{multlined} \\
	\sC_g^2(\omega  \eta)
	= 2(\sC_g(\omega)\sC_g(\eta) - g^{ab}g^{ij}\omega_{a,j}\eta_{i,b}).
\end{gathered}
\end{equation}
Further, by considering an eigenbasis of the operator induced by $\omega,$ it is easy to see that
the complete contraction of $\omega^k$ is equal to the $k^{\text{th}}$ elementary symmetric polynomial of its eigenvalues,
\begin{equation}\label{eq:SigmaKh}
	\sC_g^k(\omega^k) = \sigma_k(g^{-1}\omega).
\end{equation}
$ $\\

The metric $g$ is naturally seen as a $(1,1)$-form, which we continue to denote $g,$
\begin{equation*}
	g(V)(W) = g(V,W).
\end{equation*}
The curvature of $g,$ $\mathrm{R},$ defines a $(2,2)$-form by
\begin{equation*}
	R_g \in \Omega^{2\otimes2}(M), \quad
	R_g((V_1, V_2), (W_1, W_2)) = g(\mathrm R(V_1, V_2)W_1, W_2).
\end{equation*}

The computation of the Weyl volume of tube invariants in \cite[Chapter 4]{Gray:Tubes} shows that
\begin{equation*}
	\scal^{(2q)}(g) = \sC_g^{(2q)}(R_g^q).
\end{equation*}
The tensor $\Ric^{(2q)}$ from the introduction corresponds to the $(1,1)$-form,
\begin{equation*}
	\cR^{(2q)}_g = \sC_g^{2q-1}R^q_g,
\end{equation*}
and the $(2q)$-Lovelock tensor, $E^{(2q)}(g),$ corresponds to the $(1,1)$-form
\begin{equation*}
	\cE^{(2q)}_g = \cR^{(2q)}_g - \frac{\scal^{(2q)}(g)}{2q} g.
\end{equation*}
As mentioned above, Lovelock \cite{Lovelock} showed (see also \cite{Labbi:Var}) that ($-q^{-1}$-times) the Euler-Lagrange derivative of $\int  \scal^{(2q)}(g) \; \dvol_g$ is $\cE^{(2q)}_g.$

Note that $\Lambda^pT^*M=0$ for $p>m$ implies that
\begin{equation}\label{eq:DimVanish}
	R_g^{\ell}=0, \quad \cR_g^{(2\ell)}=0, \quad \scal^{(2\ell)}(g)=0, 
	\text{ whenever } 2\ell>m.
\end{equation}
A useful observation is that that curvature $(2,2)$-form of a metric $g$ whose sectional curvature is identically equal to a constant $\kappa$ is given by
\begin{equation}\label{eq:CurvCstSec}
	R_g = \frac\kappa2 g^2.
\end{equation}

\begin{remark} \cite{Kulkarni}
A double form $\omega \in \Omega^{a\otimes b}(M)$ is {\em symmetric} if $a=b$ and 
\begin{equation*}
	\omega((V_1, \ldots, V_a), (W_1, \ldots, W_a)) = 
	\omega((W_1, \ldots, W_a), (V_1, \ldots, V_a))
\end{equation*}
for any vector fields. Symmetry is preserved by multiplication and contraction.

A double form satisfies the {\em first Bianchi identity} if it is in the null space of the operator
\begin{equation*}
\begin{gathered}
	\sB_1: \Omega^{a\otimes b}(M) \lra \Omega^{(a+1)\otimes (b-1)}(M), \\
\begin{multlined}
	\sB_1\omega((V_1, \ldots, V_{a+1}), (W_1, \ldots, W_{b-1})) \\
	= \sum (-1)^j \omega((V_1, \ldots, \hat V_j, \ldots, V_{a+1}), (V_j, W_1, \ldots, W_{b-1}))
\end{multlined}
\end{gathered}
\end{equation*}
and the {\em second Bianchi identity} if it is in the null space of the operator
\begin{equation*}
\begin{gathered}
	\sB_2: \Omega^{a\otimes b}(M) \lra \Omega^{(a+1)\otimes b}(M), \\
\begin{multlined}
	\sB_2\omega((V_1, \ldots, V_{a+1}), (W_1, \ldots, W_{b})) \\
	= \sum (-1)^j (\nabla_{V_j}\omega)((V_1, \ldots, \hat V_j, \ldots, V_{a+1}), (V_j, W_1, \ldots, W_{b-1})).
\end{multlined}
\end{gathered}
\end{equation*}
The null spaces of these operators are preserved by multiplication and that of $\sB_1$ is preserved by contraction. 

The metric and the curvature are symmetric $(1,1)$ and $(2,2)$ forms respectively, and both satisfy the two Bianchi identities. It follows that for all $j,k,\ell$ the double form $\sC_g^{j}(g^k R_g^{\ell})$ is symmetric, satisfies the first Bianchi identity, and, if $j=0,$ satisfies the second Bianchi identity.
\end{remark}

\begin{remark} The Hodge star extends to double forms by 
\begin{equation*}
	*(\alpha \otimes \beta) = (*\alpha) \otimes (*\beta).
\end{equation*}
A four-dimensional manifold is Einstein if and only if its curvature, as a $(2,2)$-form, satisfies $*R=R,$ so the 
Hitchin-Thorpe inequality \cite{Thorpe:GB, Hitchin:Einstein}
can be written
\begin{equation*}
	\text{ in $4$ dimensions, } *R=R \implies \chi(M) \geq \frac32|\mathrm{sign}(M)|,
\end{equation*}
where $\mathrm{sign}(M)$ denotes the signature of $M.$
Thorpe obtained this inequality as a particular instance of the more general
\begin{equation*}
	\text{ in $4k$ dimensions, } *R^k=R^k \implies \chi(M) \geq \frac{(k!)^2}{(2k)!}|\mathrm{p}_k(M)|,
\end{equation*}
where $\mathrm{p}_k(M)$ denotes the $k^{\text{th}}$ Pontrjagin number of the manifold.
Thorpe's higher dimensional self-dual metrics are Lovelock, see \cite{Labbi:OnGr} for a discussion and generalization, and seem natural objects to study.
\end{remark}

\begin{remark} \label{rmk:Effectives}
The Kulkarni-Nomizu product is most often encountered in the orthogonal decomposition of the curvature tensor
\begin{equation*}
	R = W + g\lrpar{\frac{\sC_g R-\tfrac{\sC_g^2R}m}{m-2}} + g^2 \lrpar{\frac{\sC_g^2 R}{2m(m-1)}}.
\end{equation*}
There is a similar decomposition of symmetric double forms satisfying the first Bianchi identity, such as
the double forms $R^k$ and their contractions, see \cite[\S3]{Kulkarni}.
\end{remark}

The following result will be very useful below (\cite[Lemma 2.1]{Labbi:pq}).

\begin{lemma} \label{lem:DoubleForms}
For any $\omega \in \Omega^{\ell\otimes\ell}(M)$ we have
\begin{align*}
	0)& 
	\sC_g(\omega g) = g  \sC(w) + (m-2\ell) \omega \\
	1)&
	\sC_g(g^k \omega) = g^k  \sC(w) + k (m-2\ell-k+1) g^{k-1} \omega \\
	2)&
	\sC_g^p(g^k\omega)
	= \sum_{r=0}^{p}
	\binom{m-2\ell+p-k}{r}
	\frac{k!}{(k-r)!}
	\frac{p!}{(p-r)!}
	g^{k-r} \sC_g^{p-r}(\omega)
%
\end{align*}
with the convention that if $k-r<0$ then $g^{k-r}=0$ and if $p-r \notin[0,\ell]$ then $\sC_g^{p-r}(\omega)=0.$
\end{lemma}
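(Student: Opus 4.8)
The plan is to prove all three identities by a single inductive mechanism, with identity (0) serving as the base case and both (1) and (2) following by iterating it. First I would establish (0) directly from the definition of the contraction $\sC_g$. Fix a $g$-orthonormal frame $\{e_j\}$ and compute $\sC_g(\omega g)$ by feeding the basis vectors into the first and second slots as in the defining formula. Since $g$, viewed as a $(1,1)$-form, acts as the identity pairing, the product $\omega g = \omega \owedge g$ expands by the Kulkarni--Nomizu rule, exactly as in the $(1,1)$-example \eqref{eq:Example} but now with $\omega \in \Omega^{\ell\otimes\ell}(M)$ and the second factor a single $(1,1)$-form. Carrying out the contraction, one term reproduces $g\,\sC_g(\omega)$, and the remaining terms collapse — using that the frame has $m$ elements and that $\omega$ has bidegree $(\ell,\ell)$, so there are $\ell$ ``diagonal'' slots in each factor that can coincide with the contracted index — to a multiple $(m-2\ell)\,\omega$. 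This counting is the only genuinely computational point, and it is the same bookkeeping that underlies the scalar-curvature formula $\sC_g(g) = m$ and more generally the dimension-vanishing statement \eqref{eq:DimVanish}; I would present it as a short frame computation rather than grinding through indices.

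Next, identity (1) follows from (0) by induction on $k$. For $k=1$ it is exactly (0). For the inductive step, write $g^k\omega = g\,(g^{k-1}\omega)$, apply (0) with $g^{k-1}\omega \in \Omega^{(\ell+k-1)\otimes(\ell+k-1)}(M)$ in place of $\omega$ — so the ``effective degree'' is $\ell+k-1$ and the factor becomes $m - 2(\ell+k-1)$ — and then apply the inductive hypothesis to $\sC_g(g^{k-1}\omega)$. Collecting the two contributions $g\cdot\big(g^{k-1}\sC_g(\omega) + (k-1)(m-2\ell-k+2)g^{k-2}\omega\big) + (m-2\ell-2k+2)g^{k-1}\omega$ and simplifying the coefficient of $g^{k-1}\omega$ gives $k(m-2\ell-k+1)$, which is exactly (1). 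The only care needed is that multiplication in $\bigoplus_a \Omega^{a\otimes a}(M)$ is commutative, so that $g$ can be pulled in or out of the product freely; this is recorded in the double-form preliminaries above.

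Finally, identity (2) follows from (1) by induction on $p$, the number of contractions. The case $p=0$ is trivial and $p=1$ is (1). For the step, write $\sC_g^p(g^k\omega) = \sC_g^{p-1}\big(\sC_g(g^k\omega)\big)$, substitute (1) for the inner contraction, and apply the inductive hypothesis separately to $\sC_g^{p-1}(g^k\,\sC_g(\omega))$ and to $\sC_g^{p-1}(g^{k-1}\omega)$ — noting that $\sC_g(\omega) \in \Omega^{(\ell-1)\otimes(\ell-1)}(M)$, so the ``$2\ell$'' in the binomial shifts appropriately and the two resulting sums reindex to a single sum. The combinatorial identity one must verify is a Vandermonde-type relation among the binomial coefficients $\binom{m-2\ell+p-k}{r}$ together with the falling-factorial factors $\tfrac{k!}{(k-r)!}$ and $\tfrac{p!}{(p-r)!}$; I expect this to be the main obstacle, in the sense that it is the one place where the bookkeeping is delicate rather than routine. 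The conventions ``$g^{k-r}=0$ if $k-r<0$'' and ``$\sC_g^{p-r}(\omega)=0$ if $p-r\notin[0,\ell]$'' are precisely what make the reindexing and the binomial absorption work at the boundary terms, so I would check those edge cases explicitly. An alternative, if the direct induction proves unwieldy, is to pass to a simultaneous eigenbasis of the operator $g^{-1}\omega$ and reduce to an identity about symmetric functions — by \eqref{eq:SigmaKh} the complete contractions are elementary symmetric polynomials — but I expect the inductive argument via (0) and (1) to be cleaner and self-contained.
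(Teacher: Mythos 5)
Your proposal is correct and follows essentially the same route as the paper: identity (0) as the base (the paper simply cites Kulkarni for it), then induction on $k$ for (1) and induction on $p$ using (1) for (2), with the required binomial identity reducing to $(B-r+1)\binom{B}{r-1}=r\binom{B}{r}$ exactly as you anticipate. The only cosmetic difference is that you peel off the innermost contraction in step (2) while the paper peels off the outermost; both orderings work.
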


The same proof shows that for $\omega \in \Omega^{a\otimes b}(M),$
\begin{equation*}
	\sC_g^p(g^k\omega)
	= \sum_{r=0}^{p}
	\binom{m-a-b+p-k}{r}
	\frac{k!}{(k-r)!}
	\frac{p!}{(p-r)!}
	g^{k-r} \sC_g^{p-r}(\omega).
\end{equation*}

\begin{proof}
(0) is \cite[Proposition 2.4]{Kulkarni}.
We prove $(1)$ by induction, using (0) as our base case. The inductive step is
\begin{multline*}
	\sC(g^{k+1} \omega) = g \sC(g^k \omega) + (m-2(\ell+k)) g^k \omega \\
	= g^{k+1} \sC(\omega) + k(m-2\ell-k+1) g^k \omega
	+ (m-2(\ell+k)) g^k \omega \\
	= g^{k+1} \sC(\omega) + (k+1)(m-2\ell-k) g^k \omega.
\end{multline*}
Similarly we prove (2) by induction using (1) as our base case. The inductive step is, with $\bar m = m-2\ell,$
\begin{multline*}
	\sC_g^{p+1}(g^k\omega)
	= \sum_{r=0}^{p}
	\binom{\bar m+p-k}{r}
	\frac{k!}{(k-r)!}
	\frac{p!}{(p-r)!}
	\sC_g(g^{k-r} \sC_g^{p-r}(\omega))\\
	= \sum_{r=0}^{p}
	\binom{\bar m+p-k}{r}
	\frac{k!}{(k-r)!}
	\frac{p!}{(p-r)!}
	\Big(g^{k-r} \sC_g^{p+1-r}(\omega)
	+(k-r)(\bar m+2p-k-r+1)g^{k-r-1}\sC_g^{p-r}(\omega)\Big) \\
\begin{multlined}
	= \sum_{r=0}^{p}
	\frac{(\bar m+p-k)!}{r!(\bar m+p-k-r+1)!}\frac{p!}{(p-r+1)!}\frac{k!}{(k-r)!} \\
	\Big( (\bar m+p-k-r+1)(p-r+1)+r(\bar m+2p-k-r+2) \Big) g^{k-r}\sC^{p+1-r}(\omega)
\end{multlined} \\
	= \sum_{r=0}^{p+1}
	\frac{(\bar m+p-k)!}{r!(\bar m+p-k-r+1)!}\frac{p!}{(p-r+1)!}\frac{k!}{(k-r)!}
	\Big( (\bar m+p+1-k)(p+1) \Big) g^{k-r}\sC^{p+1-r}(\omega) \\
	= \sum_{r=0}^{p+1}
	\binom{\bar m+p+1-k}{r}
	\frac{k!}{(k-r)!}
	\frac{(p+1)!}{(p-r)!}
	g^{k-r} \sC_g^{p+1-r}(\omega)
\end{multline*}
\end{proof}

Some useful particular cases are 
\begin{equation*}
	\sC_g^k(g^k) = \frac{k!m!}{(m-k)!},
\end{equation*}
\begin{equation*}
	\sC_g^{k}(g^{k}\omega)
	= 
	\frac{(m-2)!k!}{(m-k-1)!}\lrpar{ k g\sC_g(\omega) + (m-k-1) \omega},
	\text{ whenever } \omega \in
	\Omega^{1\otimes 1}(M),
\end{equation*}
\begin{multline*}
	\sC_g^{k+1}(g^k\omega) = \frac{(m-3)!(k+1)!}{2(m-k-2)!}
	\lrpar{ k g \sC_g^2(\omega) + 2(m-k-2)\sC_g(\omega)},
	\text{ whenever } \omega \in 
	\Omega^{2\otimes 2}(M).
\end{multline*}
%

\section{Fefferman-Graham expansions} \label{sec:FG}

Let $(M,g)$ be a conformally compact manifold of dimension $m=n+1$ with curvature $R_g \in \Omega^{2\otimes2}(M).$
Recall that, for each $q < \frac{n+1}2,$
\begin{equation*}
	\cR^{(2q)}_g = \sC_g^{2q-1}R^q_g, \quad
	\scal^{(2q)}(g) = \sC_g^{2q}R^q_g, \quad
	\cE^{(2q)}_g = \cR^{(2q)}_g - \tfrac1{2q}\scal^{(2q)}(g) g.
\end{equation*}
For a hyperbolic metric ${\df h},$ using \eqref{eq:CurvCstSec}, these are given by 
\begin{equation*}
\begin{aligned}
	\cR^{(2q)}_{\df h} &= \lrpar{-\frac12}^q \frac{n!(2q)!}{(n-2q+1)!} {\df h} = \lambda^{(2q)} {\df h}, \\
	\scal^{(2q)}(\df h) &= \lrpar{-\frac12}^q \frac{(n+1)!(2q)!}{(n-2q+1)!} = (n+1)\lambda^{(2q)}, \\
	\cE^{(2q)}_{\df h} 
	&= \cR^{(2q)}_{\df h} - \frac{\scal^{(2q)}(\df h)}{2q}{\df h}
	= (1-\tfrac{n+1}{2q})\lambda^{(2q)}{\df h}.
\end{aligned}
\end{equation*}

In this section we follow \cite[\S2]{Graham:Vol} and work out the formal consequences of the equations
\begin{equation}\label{eq:PLEq}
	F_g(\alpha, \beta) = \sum \alpha_q (\cR^{(2q)}_g - \lambda^{(2q)}g) 
		+ \beta_q (\scal^{(2q)}(g)-(n+1)\lambda^{(2q)})g = 0,
\end{equation}
with $\alpha_q,$ $\beta_q$ constants (with the Lovelock equation corresponding to $\beta_q = -\alpha_q/(2q)$).

For given constants $\alpha,$ $\beta$ we define
\begin{multline*}
	\mathrm{LimSec}(\alpha, \beta) = \left\lbrace \kappa >0 :
	\sum \lambda^{(2q)} (\alpha_q +(n+1)\beta_q)(\kappa^q-1)=0, 
	\right. \\ \left.
	\Mand
	\bA_1(\alpha,\kappa) 
	= \sum \alpha_q \lrpar{-\frac\kappa2}^{q-1} \frac{(n-2)!}{2}\frac{(2q)!}{(n-2q)!} \neq 0
	\right\rbrace.
\end{multline*}
Note that since $(-1)^q\lambda^{(2q)}> 0,$ 
\begin{equation*}
	(-1)^q \alpha_q \geq 0 \text{ (or $\leq 0$)} \implies 1 \in \mathrm{LimSec}(\alpha, \beta),
\end{equation*}
and similarly,
\begin{equation*}
	(-1)^q (\alpha_q+(n+1)\beta_q) \geq 0 \text{ (or $\leq 0$)} \implies \mathrm{LimSec}(\alpha, \beta) \subseteq \{1\}.
\end{equation*}
On the other hand, by choosing $\alpha_q,$ $\beta_q,$ appropriately we can arrange 
\begin{equation*}
	\sum \lambda^{(2q)} (\alpha_q +(n+1)\beta_q)(\kappa^q-1) = (\kappa-1)p(\kappa)
\end{equation*}
for any polynomial $p$ of degree $\floor{\frac{n+1}2}-1,$ and so we can arrange for there to be $\floor{\frac{n+1}2}$ different positive solutions $\kappa.$

\begin{remark}
For concreteness, if 
\begin{equation*}
	\alpha = (6n(n-2)(n-3)a, 1, 0, \ldots, 0), \quad
	\beta_q = -\frac{\alpha_q}{2q}
\end{equation*}
then we are studying the equation
\begin{equation*}
	6(n-2)(n-3)a E^{(2)}_{ij}(g) + E^{(4)}_{ij}(g) 
	= 6n(n-1)(n-2)(n-3)\lrpar{a-1}g_{ij},
\end{equation*}
which is satisfied by any hyperbolic metric. The set $\mathrm{LimSec}(\alpha, \beta)$ in this case is defined as those $\kappa>0$ satisfying
\begin{equation*}
\begin{gathered}
	-\tfrac32n(n-1)(n-2)(n-3)(\kappa-1)(\kappa+1-2a) =0\\
	\bA_1(\alpha,\kappa) = 6(n-2)(n-3)(a - \kappa) \neq 0
\end{gathered}
\end{equation*}
and hence, for this particular choice of $(\alpha, \beta),$ we have
\begin{equation*}
	\mathrm{LimSec}(\alpha, \beta) = 
	\begin{cases}
	\emptyset & \Mif a=1 \\
	\{1, 2a-1\} & \Mif a>\tfrac12, a\neq 1 \\
	\{1\} & \Motherwise
	\end{cases}
\end{equation*}
\end{remark}
$ $\\

During the computations below, {\bf we will assume}
\begin{equation*}
	\alpha \neq -(n+1)\beta, \quad
	\mathrm{LimSec}(\alpha,\beta) \neq \emptyset.
\end{equation*}
We will make use of the following functions to simplify the expressions we obtain,
\begin{equation*}
\begin{aligned}
	\bA_2(\alpha, \kappa) 
	&= \sum \alpha_q \lrpar{-\frac\kappa2}^{q-1} \frac{(n-2)!}{2}\frac{(2q)!}{(n-2q+1)!} (q-1) \\
	\bA_3(\alpha,\kappa) 
	&= \sum \alpha_q \lrpar{-\frac\kappa2}^{q-2} \frac{(n-4)!}{4!}\frac{(2q)!}{(n-2q)!} (q-1) \\
	\bA_4(\alpha, \kappa) 
	&= \sum \alpha_q \lrpar{-\frac\kappa2}^{q-2} \frac{(n-4)!}{4!}\frac{(2q)!}{(n-2q+1)!} \frac{(q-1)(q-2)}2 \\
	\bB_{i,j}(\alpha, \beta, \kappa)
	&= \bA_j(\alpha)+\bA_i(\beta)+(n+1)\bA_j(\beta)
\end{aligned}
\end{equation*}
and we point out that for the usual Einstein equation $\Ric(g) = -n g$ we have $\alpha = (1,0,\ldots,0),$ $\beta=0,$ $\mathrm{LimSec}=\{1\},$
\begin{equation*}
	\lambda(\alpha) = -n, \quad
	\bA_1(\alpha,1) =1, \quad
	\bA_2(\alpha,1) =
	\bA_3(\alpha,1)=
	\bA_4(\alpha,1)=0.
\end{equation*}
%

\subsection{Asymptotically hyperbolic}

First, when does \eqref{eq:PLEq} imply that $g$ is asymptotically hyperbolic?

Let $x$ be a boundary defining function, i.e., a non-negative function smooth on $\bar M$ with $\pa M = \{x=0\}$ and vanishing to first order at $\pa M,$ and let 
\begin{equation}\label{eq:AH}
	\kappa = |\tfrac{dx}x|^2_g\rest{\pa M}.
\end{equation}
Mazzeo \cite[pg. 311]{Mazzeo:Hodge}
pointed out that the curvature of $g$ satisfies
\begin{equation*}
	R_g = -\kappa \tfrac{g^2}2 + \cO(x^{-3}) \Mas x \to 0.
\end{equation*}
It follows that
\begin{equation*}
	\sC_g^{2q}R_g^q = \kappa^q (n+1)\lambda^{(2q)} + \cO(x)
\end{equation*}
and hence 
\begin{equation*}
	F_g(\alpha, \beta)
	= \sum \lambda^{(2q)} (\alpha_q +(n+1)\beta_q)(\kappa^q-1)g +\cO(x^{-1}).
\end{equation*}
Thus $F_g(\alpha, \beta)=0$ implies 
\begin{equation}\label{eq:PolyKappa}
	\sum \lambda^{(2q)} (\alpha_q +(n+1)\beta_q)(\kappa^q-1)=0.
\end{equation}

We conclude that as long as  $\alpha_q+(n+1)\beta_q\not\equiv 0,$ which are assuming, the sectional curvatures of $g$ converge to a locally constant function as $x \to 0.$
For the computations below, $\kappa$ can be any locally constant function valued in $\mathrm{LimSec}(\alpha,\beta).$

\subsection{General expansion}
Now that we know that sectional curvatures of $g$ converge to $\kappa,$ a locally constant function on $\pa M,$ 
it follows from \cite[Lemma 2.1]{Graham:Vol} that for any boundary defining function $x_0$ there is 
another boundary defining function $x$ such that
\begin{equation*}
	x = x_0 + \cO(x_0^2) \Mas x_0 \to 0, \quad
	|\tfrac{dx}x|^2_g \equiv \kappa \text{ in a neighborhood of }\pa M.
\end{equation*}
Boundary defining functions satisfying the latter condition are known as {\em special}, or geodesic, boundary defining functions. 

From now on we assume that $x$ is a special boundary defining function, and we introduce the notation
\begin{equation*}
	\bar g = x^2 g, \quad h_0 = \bar g\rest{\pa M},
\end{equation*}
for the associated incomplete metric and boundary metric, respectively.
We use the integral curves of $\nabla_{\bar g}x$ to identify a neighborhood of $\pa M$ with a collar $[0,1)_x \times \pa M$ in which the metric takes the form
\begin{equation*}
	\bar g = \frac{dx^2}{\kappa} + h,
\end{equation*}
and we will work in this neighborhood (cf. \cite[Lemma 5.2]{Graham-Lee}).

In this neighborhood, the curvature of $g$ satisfies 
\begin{align*}
	g(R_g(\pa_x, \pa_i)\pa_x,\pa_j)
	&= x^{-4}(
	-\tfrac12 x^2h''_{ij}
	+\tfrac14 x^2 h'_{ia}h^{ab}h'_{bj}
	+ \tfrac12 x h'_{ij}
	-h_{ij} ) \\
	g(R_g(\pa_i, \pa_j)\pa_k,\pa_\ell)
	&= x^{-4}
	\Big(x^2
	h(R_h(\pa_i, \pa_j)\pa_k,\pa_\ell)
	-\tfrac{x^2\kappa}4(h'_{ik}h'_{\ell j}-h'_{i\ell}h'_{kj}) \\
	&\phantom{xxxxxxx}
	+\tfrac{x\kappa}2(h_{ik}h'_{\ell j}+h'_{ik}h_{\ell j}-h_{i\ell}h'_{kj}-h_{i\ell}h'_{kj}) 
	-\kappa(h_{ik}h_{\ell j}-h_{i\ell}h_{kj})\Big)\\
	g(R_g(\pa_x, \pa_i)\pa_j,\pa_k)
	&= \frac1{2x^2}((\nabla_{\pa_k}h')(\pa_i, \pa_j)
	- (\nabla_{\pa_j}h')(\pa_i, \pa_k) )
\end{align*}
We can reexpress this, using \eqref{eq:Example}, as an equality of $(2,2)$-forms
\begin{multline}\label{eq:R2form}
	R_g = (dx \otimes dx) \owedge
	x^{-4}(
	\tfrac{x^2}2(-h''+\tfrac12\sC_h(h')h'-\tfrac14\sC_h((h')^2))
	+ \tfrac x2  h' -h ) 
	\\
	+ x^{-2}\cS((dx\otimes 1)\owedge Dh')
	+ x^{-4}(x^2 R_h-\tfrac\kappa2(\tfrac x2h'-h)^2)
\end{multline}
where $Dh'$ is the $(1,2)$ form 
\begin{equation*}
	Dh'(U)(V,W) = \frac12 ( (\nabla_Vh')(U,W) - (\nabla_Wh')(U,V))
\end{equation*}
and $\cS((dx\otimes 1)\owedge Dh')$ denotes the symmetric $(2,2)$ form extending $(dx\otimes 1)\owedge Dh'.$

Taking $q^{\text{th}}$ power, we see that $R_g^q$ is given by
\begin{multline*}
	R_g^q
	= x^{-4q}\Big(
	q(dx \otimes dx) \owedge
	(
	\tfrac{x^2}2(-h''+\tfrac12\sC_h(h')h'-\tfrac14\sC_h((h')^2))
	+ \tfrac x2  h' -h ) 
	(x^2 R_h-\tfrac\kappa2(\tfrac x2h'-h)^2)^{q-1} \\
	+qx^{2}\cS((dx \otimes 1)\owedge Dh' (x^2 R_h-\tfrac\kappa2(\tfrac x2h'-h)^2)^{q-1})
	+(x^2 R_h-\tfrac\kappa2(\tfrac x2h'-h)^2)^q \Big),
\end{multline*}
its $(2q-1)^{\text{th}}$ contraction by
\begin{multline}\label{eq:R2q}
	\sC_g^{2q-1}(R_g^q) = \\
	x^{-2}\Big(
	q(dx \otimes dx) \owedge
	\sC_h^{2q-1}\Big((
	\tfrac{x^2}2(-h''+\tfrac12\sC_h(h')h'-\tfrac14\sC_h((h')^2))
	+ \tfrac x2  h' -h ) 
	(x^2 R_h-\tfrac\kappa2(\tfrac x2h'-h)^2)^{q-1}\Big) \\
	+qx^{2}\cS((dx \otimes 1)\owedge \sC_h^{2q-1}(Dh' (x^2 R_h-\tfrac\kappa2(\tfrac x2h'-h)^2)^{q-1}))\\
	+\sC_h^{2q-1}((x^2 R_h-\tfrac\kappa2(\tfrac x2h'-h)^2)^q) \\
	+ (2q-1)q\kappa  
	\sC_h^{2q-2}\Big((
	\tfrac{x^2}2(-h''+\tfrac12\sC_h(h')h'-\tfrac14\sC_h((h')^2))
	+ \tfrac x2  h' -h ) 
	(x^2 R_h-\tfrac\kappa2(\tfrac x2h'-h)^2)^{q-1}\Big) \Big),
\end{multline}
and its $(2q)^{\text{th}}$ contraction by
\begin{multline}\label{eq:ell2q}
	\sC_g^{2q}(R_g^q) 
	= 
	\sC_h^{2q}((x^2 R_h-\tfrac\kappa2(\tfrac x2h'-h)^2)^q) \\
	+ (2q)q\kappa  
	\sC_h^{2q-1}\Big((
	\tfrac{x^2}2(-h''+\tfrac12\sC_h(h')h'-\tfrac14\sC_h((h')^2))
	+ \tfrac x2  h' -h ) 
	(x^2 R_h-\tfrac\kappa2(\tfrac x2h'-h)^2)^{q-1}\Big) \Big).
\end{multline}

A priori, $F_g(\alpha, \beta)$ is $\cO(x^{-2}),$ but as we saw above the most singular term in $\cR^{(2q)}_g$ cancels with that in $\lambda^{(2q)}g$ and the most singular term in $\scal^{(2q)}(g)$ with $(n+1)\lambda^{(2q)}.$ Thus the most singular term in $F_g(\alpha, 0)$ is 
\begin{equation*}
\begin{gathered}
\begin{multlined}
	x^{-1}\sum \alpha_q \Big(
	(dx \otimes dx) \owedge
	\sC_h^{2q-1} ( q(q-\tfrac12)h'(-\tfrac\kappa2h^2)^{q-1}) \\
	+(2q-1)\kappa
	\sC_h^{2q-2} ( q(q-\tfrac12)h'(-\tfrac\kappa2h^2)^{q-1})
	- \sC_h^{2q-1} (q h'(-\tfrac\kappa2h^2)^{q})
	\Big) 
\end{multlined} \\
\begin{multlined}
	=x^{-1}\sum \alpha_q \Big(
	(dx \otimes dx) \owedge
	\frac{2q-1}2\lrpar{-\frac{\kappa}2}^{q-1}\frac{(n-1)!}2\frac{(2q)!}{(n-2q+1)!}\sC_h(h') \\
	- \lrpar{-\frac\kappa2}^q\frac{(n-1)!}2\frac{(2q)!}{(n-2q+1)!}
	( (n-2q+1)h' + (2q-1) h \sC_h(h') )
	\Big)
\end{multlined} \\
\begin{multlined}
	=x^{-1} \Big(
	(dx \otimes dx) \owedge \frac12(\bA_1(\alpha, \kappa)+2n\bA_2(\alpha, \kappa))\sC_h(h') \\
	+ \frac\kappa2 ((n-1)\bA_1(\alpha, \kappa) h' + (\bA_1(\alpha, \kappa)+2n\bA_2(\alpha, \kappa))h \sC_h(h'))
	\Big),
\end{multlined}
\end{gathered}
\end{equation*}
and so the most singular term in $F_g(\alpha, \beta)$ is
\begin{equation*}
\begin{gathered}
	x^{-1} \Big(
	(dx \otimes dx) \owedge \frac12(\bA_1(\alpha, \kappa)+2n\bA_2(\alpha, \kappa))\sC_h(h') \\
	+ \frac\kappa2 ((n-1)\bA_1(\alpha, \kappa) h' + (\bA_1(\alpha, \kappa)+2n\bA_2(\alpha, \kappa))h \sC_h(h'))
	\Big) \\
	+
	x^{-1}\kappa n(\bA_1(\beta, \kappa)+(n+1)\bA_2(\beta, \kappa))\sC_h(h')(\frac{dx \otimes dx}\kappa+ h).
\end{gathered}
\end{equation*}

The equation $F_g(\alpha, \beta)=0$ imposes that both the coefficient of $dx \otimes dx$ and the complement  vanish at $x=0,$
\begin{equation*}
\begin{gathered}
	\Big(\bA_1(\alpha, \kappa)+2n\bA_2(\alpha, \kappa)
	+  2n(\bA_1(\beta, \kappa)+(n+1)\bA_2(\beta, \kappa)) \Big)\sC_h(h') = \cO(x), \\
\begin{multlined}
	\Big((n-1)\bA_1(\alpha, \kappa) h' + (\bA_1(\alpha, \kappa)+2n\bA_2(\alpha, \kappa) 
		+ 2n(\bA_1(\beta, \kappa)+(n+1)\bA_2(\beta, \kappa)))h \sC_h(h'))\Big) \\
	=\cO(x).
\end{multlined}
\end{gathered}
\end{equation*}
Substituting the first equation into the second yields $h' = \cO(x)$ as long as $\bA_1(\alpha, \kappa)\neq 0.$

From \eqref{eq:R2q} we see that the terms with a factor of $x^0$ in $\sum \alpha_q \cR_g^{(2q)}$ are
\begin{equation*}
\begin{gathered}
\begin{multlined}
	\sum \alpha_q \Big(
	(dx \otimes dx) \owedge
	\sC_h^{2q-1} ( 
	-\tfrac q2(-\tfrac\kappa2)^{q-1}h^{2q-2}h'' - q(q-1)(-\tfrac\kappa2)^{q-2}h^{2q-3}R_h
	) \\
	+q\cS((dx\otimes1)\owedge \sC_h^{2q-1}(Dh' (-\tfrac\kappa2 h^2)^{q-1}))\\
	+(2q-1)\kappa
	\sC_h^{2q-2} ( 
	-\tfrac q2(-\tfrac\kappa2)^{q-1}h^{2q-2}h'' - q(q-1)(-\tfrac\kappa2)^{q-2}h^{2q-3}R_h
	) \\
	+ \sC_h^{2q-1} (q(-\tfrac\kappa2)^{q-1}h^{2q-2}R_h)
	\Big)
\end{multlined} \\
\begin{multlined}
	=\sum \alpha_q \Big(
	(dx \otimes dx) \owedge
	\Big( -\frac 12 \lrpar{-\frac\kappa2}^{q-1} \frac{(2q)!}{(n-2q+1)!} \frac{(n-1)!}2 \sC_h(h'') \\
	-(q-1)\lrpar{-\frac\kappa2}^{q-2}\frac{(2q)!}{(n-2q+1)!}\frac{(n-2)!}2\sC_h^2(R_h)\Big) \\
	+\lrpar{-\frac\kappa2}^{q-1} \frac{(2q)!}{(n-2q)!} \frac{(n-2)!}2
	\cS((dx\otimes1)\owedge \sC_h(Dh'))\\
	+
	\lrpar{-\frac\kappa2}^{q} \frac{(2q)!}{(n-2q+1)!}\frac{(n-2)!}2
	( (n-2q+1)h'' + (2q-2)h \sC_h(h'')) \\
	+\lrpar{-\frac\kappa2}^{q-1} \frac{(2q)!}{(n-2q+1)!}\frac{(n-2)!}2
	((n-2q+1)\sC_h(R_h) + (q-1)h\sC_h^2(R_h))
	\Big)
\end{multlined} \\
\begin{multlined}
	=(dx \otimes dx) \owedge
	\Big( -\frac12(\bA_1(\alpha, \kappa) + 2\bA_2(\alpha, \kappa)) \sC_h(h'') 
	+\frac2\kappa\bA_2(\alpha, \kappa)\sC_h^2(R_h) \Big) \\
	+\bA_1(\alpha, \kappa)
	\cS((dx\otimes1)\owedge \sC_h(Dh'))\\
	+
	\lrpar{-\frac\kappa2}
	( \bA_1(\alpha, \kappa)h'' + 2\bA_2(\alpha, \kappa)h \sC_h(h'')) 
	+
	(\bA_1(\alpha, \kappa)\sC_h(R_h) + \bA_2(\alpha, \kappa)h\sC_h^2(R_h)).
\end{multlined}
\end{gathered}
\end{equation*}
Hence the terms in $F_g(\alpha, \beta)$ with a factor of $x^0$ are
\begin{equation*}
\begin{gathered}
	(dx \otimes dx) \owedge
	\Big( -\frac12(\bA_1(\alpha, \kappa) + 2\bA_2(\alpha, \kappa)) \sC_h(h'') 
	+\frac2\kappa\bA_2(\alpha, \kappa)\sC_h^2(R_h) \Big) \\
	+\bA_1(\alpha, \kappa)
	\cS((dx\otimes1)\owedge \sC_h(Dh'))\\
	+
	\lrpar{-\frac\kappa2}
	( \bA_1(\alpha, \kappa)h'' + 2\bA_2(\alpha, \kappa)h \sC_h(h'')) 
	+
	(\bA_1(\alpha, \kappa)\sC_h(R_h) + \bA_2(\alpha, \kappa)h\sC_h^2(R_h)) \\
\begin{multlined}
	+\Big(
	-\kappa(\bA_1(\beta, \kappa)+(n+1)\bA_2(\beta, \kappa))\sC_h^2(h'')
	+(\bA_1(\beta, \kappa)+(n+2)\bA_2(\beta, \kappa)) \sC_h^2(R_h)\Big) \\
	( \frac{dx\otimes dx}\kappa + h).
\end{multlined}
\end{gathered}
\end{equation*}

Since $h'=\cO(x),$ we can use \eqref{eq:R2q}, \eqref{eq:ell2q} to write $F_g(\alpha, \beta)=0$ as
\begin{multline}\label{eq:SR2q}
	(dx \otimes dx) \owedge \frac12(\bA_1(\alpha, \kappa)+2n\bA_2(\alpha, \kappa))\sC_h(h') \\
	+ \frac\kappa2 ((n-1)\bA_1(\alpha, \kappa) h' + (\bA_1(\alpha, \kappa)+2n\bA_2(\alpha, \kappa))h \sC_h(h'))
	\\
	+
	\kappa n(\bA_1(\beta, \kappa)+(n+1)\bA_2(\beta, \kappa))\sC_h(h')(\frac{dx \otimes dx}\kappa+ h) \\
	+x\Big(
	(dx \otimes dx) \owedge
	\Big( -\frac12(\bA_1(\alpha, \kappa) + 2\bA_2(\alpha, \kappa)) \sC_h(h'') 
	+\frac2\kappa\bA_2(\alpha, \kappa)\sC_h^2(R_h) \Big) \\
	+\bA_1(\alpha, \kappa)
	\cS((dx\otimes1)\owedge \sC_h(Dh'))\\
	+
	\lrpar{-\frac\kappa2}
	( \bA_1(\alpha, \kappa)h'' + 2\bA_2(\alpha, \kappa)h \sC_h(h'')) 
	+
	(\bA_1(\alpha, \kappa)\sC_h(R_h) + \bA_2(\alpha, \kappa)h\sC_h^2(R_h)) \\
	+\Big(
	-\kappa(\bA_1(\beta, \kappa)+(n+1)\bA_2(\beta, \kappa))\sC_h^2(h'')
	+(\bA_1(\beta, \kappa)+(n+2)\bA_2(\beta, \kappa)) \sC_h^2(R_h)\Big)( \frac{dx\otimes dx}\kappa + h) \Big) \\
	= \cO(x^3).
\end{multline}
Taking $k$ derivatives with respect to $x$ we find
\begin{multline}\label{eq:PLk}
	(dx \otimes dx) \owedge \frac{1}2((1-k)\bA_1(\alpha, \kappa)+2(n-k)\bB_{1,2}(\alpha, \beta, \kappa) )\sC_h(h^{(k+1)}) \\
	+ \frac\kappa2 (n-1-k)\bA_1(\alpha, \kappa) h^{(k+1)} \\
	+ \frac\kappa2(\bA_1(\alpha, \kappa)+2(n-k)\bB_{1,2}(\alpha, \beta, \kappa))h \sC_h(h^{(k+1)}))  \\
	= \text{ terms involving fewer derivatives of $h$ } + \cO(x).
\end{multline}
Restricting the coefficient of $dx \otimes dx$ and the contraction of the coefficient without $dx$ to $x=0$ yield the equations
\begin{equation*}
\begin{gathered}
\begin{multlined}
	((1-k)\bA_1(\alpha, \kappa)+2(n-k)\bB_{1,2}(\alpha, \beta, \kappa) )\sC_h(h^{(k+1)})|_{x=0} \\
	\phantom{filler for formating xxx}
	= \text{ terms involving fewer derivatives of $h$ } + \cO(x), 
\end{multlined} \\
\begin{multlined} 
	\Big((n-1-k)\bA_1(\alpha, \kappa) 
	+ n(\bA_1(\alpha, \kappa)+2(n-k)\bB_{1,2}(\alpha, \beta, \kappa)) \Big)
	 \sC_h(h^{(k+1)}))|_{x=0} \\
	= \text{ terms involving fewer derivatives of $h$ } + \cO(x).
\end{multlined}
\end{gathered}
\end{equation*}
Note that if $\bA_1(\alpha, \kappa)\neq0$ then the two coefficients of $\sC_h(h^{(k+1)})|_{x=0}$ can not both be zero;
indeed if the first should vanish, then the second can be written as
$(n-1)(k+1)\bA_1(\alpha, \kappa).$
Hence if we have determined $\{h_0, h'|_{x=0}, \ldots, h^{(k)}|_{x=0}\}$ we can determine $\sC_{h_0}(h^{(k+1)})|_{x=0}$ and then, as long as $k+1\neq n,$ use  \eqref{eq:PLk} to determine $h^{(k+1)}|_{x=0}.$

It follows inductively that the equations
\begin{equation}\label{eq:Constructive}
	\text{ on-diagonal parts of }F_g(\alpha, \beta) = \cO(x^{n-2}), 
	\quad
	\sC_g(F_g(\alpha, \beta)) = \cO(x^{n-1})
\end{equation}
(note that the analysis above only involved the on-diagonal parts of $F_g(\alpha,\beta)$ with respect to the splitting $\ang{\pa_x}\oplus\ang{\pa_x}^{\perp}$)
uniquely determine a metric, up to order $x^{n-2},$ of the form $x^{-2}(dx^2/\kappa + h)$ with 
$h^{(\ell)}|_{x=0}$ for $\ell<n,$ and $\sC_{h_0}(h^{(n)})|_{x=0},$ natural tensor invariants of $h|_{x=0},$ and, since the left hand side of \eqref{eq:PLEq} respects parity in $x$ (see \eqref{eq:R2form}), with $h^{(\ell)}|_{x=0}=0$ for $\ell<n$ odd. \\

When $\ell=n,$  equation \eqref{eq:PLk} is
\begin{multline*}
	(dx \otimes dx) \owedge \frac{1}2( (2-n)\bA_1(\alpha, \kappa)+2\bB_{1,2}(\alpha, \beta, \kappa))\sC_{h_0}(h^{(n)})  \\
	+ \frac\kappa2 (\bA_1(\alpha, \kappa) + 2\bB_{1,2}(\alpha, \beta, \kappa))h_0 \sC_{h_0}(h^{(n)})  \\
	= \text{ terms involving fewer derivatives of $h$ } + \cO(x).
\end{multline*}
If $n$ is odd, then by parity the right hand side is $\cO(x),$ so $ \sC_{h_0}(h^{(n)})|_{x=0}=0,$ but the trace-free part is unconstrained.
If $n$ is even, then the right hand side may have a non-vanishing trace-free part, so that the expansion of $h$ must include a term $x^n \log x$ with a trace-free coefficient. \\

In this way we have shown Theorem \ref{thm:FefGrExp} that metrics satisfying $F_g(\alpha,\beta)=0$ formally have a Fefferman-Graham expansion. 
On the other hand, if we start with $h_0,$ we have only shown how to arrange \eqref{eq:Constructive}.
Following \cite{Graham-Hirachi:Obst} we next show that in the particular case of the Lovelock equations, i.e., when $\beta_q = -\alpha_q/2q$ so that $F_g(\alpha,\beta)$ is a linear combination of Lovelock tensors, the off-diagonal terms are related to the on-diagonal because the Lovelock tensors are divergence-free.

\begin{lemma}\label{lem:DivFree}
If $\beta_q = -\alpha_q/(2q),$ and $g$ satisfies \eqref{eq:Constructive}, then $g$ satisfies
\begin{equation*}\label{eq:Constructive'}
	\text{ off-diagonal parts of }F_g(\alpha, \beta) = \cO(x^{n-1}),
\end{equation*}
and, if $\sO = x^{2-n}\;\text{trace-free}(F_g(\alpha,\beta))|_{x=0},$ then  the divergence of $\sO$ is determined by $h_0$ and vanishes if $n$ is odd.

\end{lemma}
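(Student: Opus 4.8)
The plan is to run the argument of \cite{Graham:Vol, Graham-Hirachi:Obst} for the Einstein tensor essentially verbatim. The one point that makes this possible is that when $\beta_q=-\alpha_q/(2q)$ the tensor $F:=F_g(\alpha,\beta)$ equals $\sum_q\alpha_q\cE^{(2q)}_g$ plus a locally constant multiple of $g$; since each Lovelock tensor $\cE^{(2q)}_g$ is divergence-free and $g$ is parallel, this yields the pointwise Bianchi-type identity $\delta_g F=0$ on all of $M$. Crucially this identity involves no reference to the coefficients $\alpha$, $\beta$, so the degeneracy measured by $\bA_1(\alpha,\kappa)$ plays no role in it.

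First I would pass to the geodesic collar $[0,1)_x\times X$ in which $g=x^{-2}(\kappa^{-1}dx^2+h)$, set $\bar g=\kappa^{-1}dx^2+h$, and split $F$ along $\ang{\pa_x}\oplus\ang{\pa_x}^{\perp}$ into its $dx\otimes dx$-coefficient $f$, its mixed part $\phi$ (an $x$-dependent one-form on $X$, the off-diagonal part of $F$), and its tangential part $\psi$; hypothesis \eqref{eq:Constructive} says exactly that the on-diagonal parts $f$ and $\psi$ vanish to the order recorded there while $\sC_g F$ vanishes one order further. I would then expand $\delta_g F=0$ by converting $\delta_g$ to $\delta_{\bar g}$ via the conformal-change formula for the codifferential on symmetric $2$-tensors and using the warped-product Christoffel symbols of $\bar g$ (which involve only $h$, $h'$, $h^{-1}$ and the Levi-Civita connection of the slice). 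The components of $\delta_g F=0$ obtained by pairing the free index against a tangential vector field reduce to a first-order linear ODE in $x$ of the schematic form $x\,\pa_x\phi_i+(1-n+\cO(x))\phi_i=-\kappa^{-1}x\,(\delta_h\psi)_i$, whose indicial exponent $1-n$ is nonzero since $n\ge 3$ and whose inhomogeneous term is built purely from the on-diagonal part $\psi$ (with $h'=\cO(x)$ entering only as a coefficient of $\phi$). Integrating this ODE, and using that $f,\psi$ vanish to the order of \eqref{eq:Constructive}, forces $\phi$ to vanish one order more; this is the assertion that the off-diagonal parts of $F$ are $\cO(x^{n-1})$.

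For the statement about $\sO$ I would read the same tangential components of $\delta_g F=0$ one order higher. With $\phi=\cO(x^{n-1})$ now available, the $\phi$-terms in that equation cancel at this order, and what remains is an identity expressing $\delta_{h_0}\sO$ through the already-determined coefficients $h^{(\ell)}|_{x=0}$, $\ell<n$, and $\sC_{h_0}(h^{(n)})|_{x=0}$; by Theorem \ref{thm:FefGrExp}(a) these are all natural tensors in $h_0$ --- the hypothesis $\sC_g F=\cO(x^{n-1})$ being precisely what pins down the trace of $h^{(n)}$ and so keeps $h^{(n)}$ itself out of the expression. Hence $\delta_{h_0}\sO$ is formally determined by $h_0$. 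When $n$ is odd, $n-2$ is odd, and since the left-hand side of \eqref{eq:PLEq} respects parity in $x$ (by \eqref{eq:R2form}) while $h$ is even modulo $\cO(x^n)$, the on-diagonal coefficients of $F$ at the odd order $x^{n-2}$ vanish; inserting this into the same identity gives $\delta_{h_0}\sO=0$.

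The substantive work, and the main obstacle, is the bookkeeping in the last two steps. One has to carry the conformal-rescaling terms, the $h'$-dependent corrections, and the $x$-dependence of the slice connection all the way through $\delta_g F=0$ to check: that the coefficient of $\phi_i$ in the tangential components really has the nonzero indicial value $1-n$, so the ODE can be integrated to gain the extra power of $x$ (and, for $n$ even, to control the possible $x^{n-1}\log x$ resonance); and that at the critical order every term in the relevant component of $\delta_g F=0$ other than $\delta_{h_0}\sO$ either involves only the already-determined coefficients of $h$, or dies by parity. Each of these is structurally identical to the Einstein computation of \cite{Graham:Vol, Graham-Hirachi:Obst}; the genuinely new input is only the opening observation that $\delta_g F=0$ holds for the full Lovelock combination, which is what makes the passage from Einstein to Lovelock cause no new difficulty here.
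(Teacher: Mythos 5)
Your proposal matches the paper's proof in all essentials: the key input is the same (the Lovelock combination is divergence-free, so $\delta_g F=0$ holds identically), and the tangential components of this identity in the geodesic collar give exactly the first-order ODE $x\,\pa_x F_{0\ell}+(1-n+\cO(x))F_{0\ell}=\cO(x\cdot x^{n-2})$ whose indicial root $n-1$ forces the off-diagonal part to gain one order, with the critical ($s=n-1$) coefficient of the same equation identifying $\delta_{h_0}\sO$ with data determined by $h_0.$ The only cosmetic difference is in the final step: the paper reads off $\delta_{h_0}\sO$ as the $x^{n-1}\log x$ coefficient of $F_{0\ell}$ (which vanishes for $n$ odd because the expansion is log-free), whereas you argue via parity of the on-diagonal $x^{n-2}$ coefficient; both are valid.
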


\begin{proof}
We  compute in local coordinates, where $\pa_0=\pa_x,$ indices $\{s, t, u, v\}$ vary in $\{0, \ldots, n\}$ and indices $\{i,j,k,\ell\}$ vary in $\{1, \ldots, n\}.$

If $F_{ij}$ is the $(0,2)$-tensor corresponding to $F_g(\alpha, \beta),$ then it satisfies
\begin{equation*}
	0 = g^{st} F_{tu;s} 
	= g^{st}(\pa_sF_{tu} - \Gamma^v_{st}F_{vu}-\Gamma_{su}^vF_{tv}),  \Mforall u \in \{0,\ldots, n\}.
\end{equation*}
For a metric of the form $g = x^{-2}(dx^2/\kappa + h_x)$ the Christoffel symbols satisfy
\begin{equation*}
	\Gamma^k_{ij} = \bar \Gamma^k_{ij}, \quad
	\Gamma^0_{jk} = -\frac\kappa2 h'_{jk} + \frac\kappa x h_{jk}, \quad
	\Gamma^i_{0k} = \frac12 h^{i\ell}h'_{\ell k} - \frac1x \delta^i_k, \quad
	\Gamma^0_{0k} = \Gamma^i_{00} = 0, \quad
	\Gamma^0_{00} = -\frac1x,
\end{equation*}
where $\bar\Gamma$ denotes the Christoffel symbol of $h_x.$
Hence we have
\begin{multline*}
	0 = g^{st}F_{tu;s}
	= x^2 \left[
	\kappa \lrpar{ \pa_0F_{0u} + \frac1x F_{0u} + \frac{\delta_{0u}}xF_{00} }
	\right. \\ \left.
	+h^{ij}\pa_i F_{ju} + \kappa \lrpar{\frac12 h^{ij}h_{ij}'-\frac{n}x}F_{0u}
	-h^{ij}\bar\Gamma^k_{ij} F_{ku}
	-h^{ij}\Gamma_{iu}^k F_{jk} \right].
\end{multline*}
For $u=0$ this says
\begin{equation*}
	\kappa \lrpar{ \pa_x + \frac{2-n}x + \frac12 h^{ij}h_{ij}' } F_{00}  
	=
	\lrpar{\frac12h^{ij}h^{k\ell}h'_{i\ell}-h^{jk}} F_{jk}
	-
	(h^{ij}\pa_i  
	-h^{ik}\bar\Gamma^j_{ik} )F_{j0}
\end{equation*}
and for $u=\ell \neq 0,$ this says
\begin{equation}\label{eq:DivNot0}
	\kappa\lrpar{\pa_x + \frac{1-n}x + \frac12 h^{ij}h_{ij}'}F_{0\ell}
	= -h^{ij}F_{j\ell;i}.
\end{equation}
From \eqref{eq:R2q} we know that an expansion in $x$ for $h$ induces an expansion in $x$ for $F_{0\ell},$ starting at $x^0.$

Now since the right hand side of  \eqref{eq:DivNot0} is $\cO(x^{n-2})$ and $\frac12 h^{ij}h_{ij}' = \cO(x),$ if we assume that 
$F_{0\ell} = A_s x^s + B_s x^s\log x + \cO(x^{s+1})$ then we have
\begin{equation*}
	(s+1-n) (A_s x^{s-1} + B_s x^{s-1}\log x) 
	+ B_s x^{s-1} + \cO(x^{s})
	= \cO(x^{n-2}),
\end{equation*}
hence $A_s=B_s=0$ if $s<n-1.$
On the other hand if $s=n-1$ and we write $F_{j\ell} = \sO_{j\ell} x^{n-2} + \cO(x^{n-1}),$ then this same equation tells us that $h_0^{ij}\sO_{j\ell;i}=B_{n-1}.$
Thus we can conclude that $F_{0\ell}= \cO(x^{n-1}),$ and that the divergence of $\sO$ is determined by $h_0$ and vanishes if $n$ is odd.
\end{proof}

This finishes the proof of part (b) of Theorem \ref{thm:FefGrExp}. We have already shown parts (i) and (ii) for more general equations $F_g(\alpha, \beta)=0,$ and part (iii) follows from Lemma \ref{lem:DivFree}.

If $n$ is even then this finishes the proof of part (a) of Theorem \ref{thm:FefGrExp}, since we now know that any smooth conformally compact metric whose Taylor expansion is as above satisfies 
\begin{equation*}
	\sum \alpha_q E^{(2q)}_{ij}(g) = \lambda(\alpha) g_{ij} + \cO(x^{n-2}).
\end{equation*}
As in \cite{Graham-Hirachi:Obst} this also shows that it is unique modulo $\cO(x^{n-2})$ up to a diffeomorphism fixing $X \times \{0\}.$

If $n$ is odd and we postulate that $h$ has a smooth Taylor expansion at $\pa M,$
\begin{equation*}
	h \sim \sum x^j h^{(j)},
\end{equation*}
then the first term that we have yet to determine, $x^n h^{(n)},$ is also the first odd power of $x.$
We know that the on-diagonal part of the equation $F_g(\alpha,\beta)=0$ only imposes $\sC_g(h^{(n)})=0,$
and the argument in Lemma \ref{lem:DivFree} tells us that the off-diagonal parts of $F_g(\alpha, \beta)$ are $\cO(x^{n-1})$ but does not determine the $x^{n-1}$ term.
Fortunately, since this is the first odd power of $x$ in the expansion of $h,$ it is easy to determine directly from \eqref{eq:R2q} that 
the $x^{n-1}$ term in the expansion of the off-diagonal part of $F_g(\alpha, \beta)$ is
\begin{equation*}
	\sum \alpha_q \lrpar{
	q\cS((dx \otimes 1)\owedge \sC_h^{2q-1}(Dh^{(n)} (h^{2q-2}) ) ) }.
\end{equation*}
In particular, if we set $h^{(n)}=0$ then the off-diagonal part of $F_g(\alpha, \beta)$ is $\cO(x^{n}).$

After setting $h^{(n)}=0$ we can now continue as above and use the on-diagonal parts of the equation 
$F_g(\alpha,\beta)=0$ to determine the full Taylor expansion of $h$ (involving only even powers of $x$) 
and then use Lemma \ref{lem:DivFree} to see that  the $h$ we have constructed satisfies
\begin{equation*}
	F_g(\alpha, \beta) = \cO(x^{\infty})
\end{equation*}
(but we emphasize that here $\beta_q = -\alpha_q/(2q)$).
This finishes the proof of part (a) of Theorem \ref{thm:FefGrExp} when $n$ is odd.

\subsection{First couple of terms} \label{sec:FirstCouple}

A Poincar\'e-Lovelock metric in dimension greater than four has an expansion
\begin{equation*}
	g = x^{-2}(dx^2 + h_0 + x^2h_2 + x^4h_4 + \cO(x^5))
\end{equation*}
with $h_2$ and $h_4$ symmetric two tensors on $\pa M$ locally determined from $h_0.$
In this subsection we follow \cite[\S6.9]{Juhl:Book} and determine the coefficients $h_2$ and $h_4$ in terms of $h_0.$
We will show that $h_2$ is always a constant multiple of the Schouten tensor of $h_0,$ while $h_4$ depends on the coefficients of \eqref{eq:PLEq}.

In a technique used in {\em loc. cit.} but that goes back at least to \cite{Henningson-Skenderis}, 
we introduce the coordinate $\rho = x^2$ so that the metric takes the form
\begin{equation*}
	g = \frac{d\rho^2}{4\kappa \rho^2} + \frac{h}\rho.
\end{equation*}
In this subsection we consider $h$ as a function of $\rho$ and we will use $\dot h$ to denote $\pa_{\rho}h.$

We obtain an expression for the curvature in these coordinates from \eqref{eq:R2form} by making the replacements
\begin{equation*}
	x\mapsto \rho^{1/2}, \quad
	dx \mapsto  \frac{d\rho}{2\rho^{1/2}}, \quad
	\pa_x h \mapsto 2\rho^{1/2} \pa_{\rho}h, 
\end{equation*}
which yields
\begin{equation*}
\begin{gathered}
	R_g 
	=
	(d\rho\otimes d\rho)\owedge
	\lrpar{ \frac1{4\rho} (-2\ddot{h}+\sC_h(\dot{h})\dot{h}-\tfrac12\sC_h(\dot{h}^2)) - \frac1{4\rho^3}h}\\
	+\frac1\rho \cS((d\rho\otimes 1) \owedge D(\dot{h}))
	+\frac1\rho R_h
	-\frac\kappa{2\rho^2}(\rho \dot{h}-h)^2.
\end{gathered}
\end{equation*}
Similarly,
\begin{multline*}
	\sC_g^{2q-1}(R_g^q) \\
	=
	(d\rho\otimes d\rho)\owedge \rho^{-2}\sC_h^{2q-1}\Big[
	\frac{q}{4}\Big( \rho^2 (-2\ddot{h}+\sC_h(\dot{h})\dot{h}-\tfrac12\sC_h(\dot{h}^2)) -h \Big)
	 \Big( -\tfrac\kappa2 \rho^2\dot{h}^2 +\rho(R_h +\kappa h\dot{h}) -\tfrac\kappa2 h^2\Big)^{q-1}\Big]\\
	+q \cS\Big((d\rho\otimes 1) \owedge \sC_h^{2q-1}\Big[
		D(\dot{h})
		 \Big( -\tfrac\kappa2 \rho^2\dot{h}^2 +\rho(R_h +\kappa h\dot{h}) -\tfrac\kappa2 h^2\Big)^{q-1} \Big]\Big) \\
	+\rho^{-1}\sC_h^{2q-1}
	\Big( -\tfrac\kappa2 \rho^2\dot{h}^2 +\rho(R_h +\kappa h\dot{h}) -\tfrac\kappa2 h^2\Big)^{q} \\
	+(2q-1)q\kappa\rho^{-1}\sC_h^{2q-2}\Big[
	\Big( \rho^2 (-2\ddot{h}+\sC_h(\dot{h})\dot{h}-\tfrac12\sC_h(\dot{h}^2)) -h \Big)
	 \Big( -\tfrac\kappa2 \rho^2\dot{h}^2 +\rho(R_h +\kappa h\dot{h}) -\tfrac\kappa2 h^2\Big)^{q-1}\Big],
\end{multline*}
and
\begin{multline*}
	\sC_g^{2q}(R_g^q) 
	=
	\sC_h^{2q}
	\Big( -\tfrac\kappa2 \rho^2\dot{h}^2 +\rho(R_h +\kappa h\dot{h}) -\tfrac\kappa2 h^2\Big)^{q} \\
	+(2q)q\kappa\sC_h^{2q-1}\Big[
	\Big( \rho^2 (-2\ddot{h}+\sC_h(\dot{h})\dot{h}-\tfrac12\sC_h(\dot{h}^2)) -h \Big)
	 \Big( -\tfrac\kappa2 \rho^2\dot{h}^2 +\rho(R_h +\kappa h\dot{h}) -\tfrac\kappa2 h^2\Big)^{q-1}\Big].
\end{multline*}

Thus the coefficient of $d\rho\otimes d\rho$ in $\sum \alpha_q (\cR_g^{(2q)}-\lambda^{(2q)}g)$ is given by
\begin{equation*}
\begin{gathered}
\begin{multlined}
	\sum \alpha_q \frac{d\rho\otimes d\rho}{4\kappa\rho^2}\owedge \Big[
	\rho (q-1)\lrpar{-\frac\kappa2}^{q-1} \frac{(2q)!}{(n-2q+1)!}\frac{(n-2)!}2\sC_h^2(R_h+\kappa h\dot{h}) \\
	+ \rho^2 (q-1) \lrpar{-\frac\kappa2}^{q} \frac{(2q)!}{(n-2q+1)!}\frac{(n-2)!}2\sC_h^2(\dot{h}^2) \\
	-\rho^2\lrpar{-\frac\kappa2}^{q}\frac{(2q)!}{(n-2q+1)!}(n-1)! \sC_h(-2\ddot{h}+\sC_h(\dot{h})\dot{h}-\tfrac12\sC_h(\dot{h}^2)) \\
	+ \rho^2 \frac{(q-1)(q-2)}2\lrpar{-\frac\kappa2}^{q-2}
		\frac{(2q)!}{(n-2q+1)!}\frac{(n-4)!}{4!}\sC_h^4((R_h+\kappa h\dot{h})^2) 
	+\cO(\rho^3) \Big]	
\end{multlined} \\
\begin{multlined}
	=\frac{d\rho\otimes d\rho}{4\kappa\rho^2}\owedge \Big[ 
	\rho \bA_2(\alpha, \kappa)\sC_h^2(R_h+\kappa h\dot{h}) 
	+ \rho^2 \lrpar{-\frac\kappa2}(\bA_1(\alpha, \kappa)+3\bA_2(\alpha, \kappa))\sC_h^2(\dot{h}^2) \\
	+\rho^2\kappa(\bA_1(\alpha, \kappa)+2\bA_2(\alpha, \kappa))\sC_h(-2\ddot{h}+\sC_h(\dot{h})\dot{h}) \\
	+\rho^2\bA_4(\alpha, \kappa)\sC_h^4((R_h+\kappa h\dot{h})^2) 
	+\cO(\rho^3) \Big],
\end{multlined}
\end{gathered}
\end{equation*}
while the terms without $d\rho$ in $\sum \alpha_q (\cR_g^{(2q)}-\lambda^{(2q)}g)$ are given by
\begin{equation*}
\begin{gathered}
\begin{multlined}
	\rho^{-1} \sum \alpha_q \frac{(2q)!}{(n-2q+1)!}
	\Big[ \\
	\rho  \lrpar{-\frac\kappa2}^{q-1} \frac{(n-2)!}2
		((n-2q+1)\sC_h(R_h+\kappa h\dot{h}) + (q-1)h\sC_h^2(R_h+\kappa h\dot{h})) \\
	+\rho^2 \lrpar{-\frac\kappa2}^{q}  \frac{(n-2)!}2
		(2(n-2q+1)\sC_h(\dot{h}^2) + 3(q-1)\sC_h^2(\dot{h}^2)) \\
	-\rho^2\lrpar{-\frac\kappa2}^{q}\frac{(n-2)!}2 
		(2(n-2q+1)(-2\ddot{h}+\sC_h(\dot{h})\dot{h})
			+4(q-1)h\sC_h(-2\ddot{h}+\sC_h(\dot{h})\dot{h})) \\
	+\rho^2 \lrpar{-\frac\kappa2}^{q-2} \frac{(q-1)}2\frac{(n-4)!}{4!}
		(2(n-2q+1)\sC_h^3( (R_h+\kappa h\dot{h})^2) + (q-2)h\sC_h^4( (R_h+\kappa h \dot{h})^2 )) \\
	+\cO(\rho^3) \Big] 
\end{multlined}\\
\begin{multlined}
	= 
	\rho^{-1}
	\Big[
	\rho (\bA_1(\alpha, \kappa) \sC_h(R_h+\kappa h\dot{h}) + \bA_2(\alpha, \kappa)h\sC_h^2(R_h+\kappa h\dot{h})) \\
	+\rho^2 
	\lrpar{-\frac\kappa2}
		(2\bA_1(\alpha, \kappa)\sC_h(\dot{h}^2) + 3\bA_2(\alpha, \kappa)\sC_h^2(\dot{h}^2)) \\
	-\rho^2\lrpar{-\frac\kappa2}
		(2\bA_1(\alpha, \kappa)(-2\ddot{h}+\sC_h(\dot{h})\dot{h})
			+4\bA_2(\alpha, \kappa)h\sC_h(-2\ddot{h}+\sC_h(\dot{h})\dot{h})) \\
	+\rho^2 
		(\bA_3(\alpha, \kappa)\sC_h^3( (R_h+\kappa h\dot{h})^2) 
		+ \bA_4(\alpha, \kappa)h\sC_h^4( (R_h+\kappa h \dot{h})^2 )) 
	+\cO(\rho^3) \Big].
\end{multlined}
\end{gathered}
\end{equation*}
By contracting, we see that $\sum \beta_q(\scal^{(2q)}(g) - (n+1)\lambda^{(2q)})g$ is given by
\begin{multline*}
	\Big[
	\rho (\bA_1(\beta, \kappa) +(n+1)\bA_2(\beta, \kappa))\sC_h^2(R_h+\kappa h\dot{h}) \\
	+\rho^2 
	\lrpar{-\frac\kappa2}
		3(\bA_1(\beta, \kappa)+ (n+1)\bA_2(\beta, \kappa))\sC_h^2(\dot{h}^2) \\
	-\rho^2\lrpar{-\frac\kappa2}
		4(\bA_1(\beta, \kappa)+(n+1)\bA_2(\beta, \kappa))\sC_h(-2\ddot{h}+\sC_h(\dot{h})\dot{h}) \\
	+\rho^2 
		(\bA_3(\beta, \kappa)+(n+1)\bA_4(\beta, \kappa))\sC_h^4( (R_h+\kappa h\dot{h})^2) 
	+\cO(\rho^3) \Big]\lrpar{\frac{d\rho\otimes d\rho}{4\kappa\rho^2}+\frac{h}\rho}.
\end{multline*}

It follows that $F_g(\alpha, \beta)=0$ gives us from the $d\rho \otimes d\rho$ term the equation
\begin{multline}\label{eq:JN}
	d\rho\otimes d\rho\owedge \Big[ 
	\bB_{1,2}(\alpha, \beta, \kappa)\sC_h^2(R_h+\kappa h\dot{h}) \\
	+ \rho \lrpar{-\frac\kappa2}
	(\bA_1(\alpha, \kappa)+3\bB_{1,2}(\alpha, \beta, \kappa))\sC_h^2(\dot{h}^2) \\
	-\rho\lrpar{-\frac\kappa2}
	(2\bA_1(\alpha, \kappa)+4\bB_{1,2}(\alpha, \beta, \kappa))
		\sC_h(-2\ddot{h}+\sC_h(\dot{h})\dot{h}) \\
	+\rho
	\bB_{3,4}(\alpha, \beta, \kappa)
	\sC_h^4((R_h+\kappa h\dot{h})^2) 
	+\cO(\rho^2) \Big] =0
\end{multline} 
and from the term without $d\rho$ the equation
\begin{multline}\label{eq:JT}
	\bA_1(\alpha, \kappa) \sC_h(R_h+\kappa h\dot{h}) + 
	\bB_{1,2}(\alpha, \beta, \kappa)h\sC_h^2(R_h+\kappa h\dot{h}) \\
	+\rho
	\lrpar{-\frac\kappa2}
		(2\bA_1(\alpha, \kappa)\sC_h(\dot{h}^2) 
		+ 3\bB_{1,2}(\alpha, \beta, \kappa)h\sC_h^2(\dot{h}^2)) \\
	-\rho\lrpar{-\frac\kappa2}
		(2\bA_1(\alpha, \kappa)(-2\ddot{h}+\sC_h(\dot{h})\dot{h})
			+4\bB_{1,2}(\alpha, \beta, \kappa)h\sC_h(-2\ddot{h}+\sC_h(\dot{h})\dot{h})) \\
	+\rho 
		(\bA_3(\alpha, \kappa)\sC_h^3( (R_h+\kappa h\dot{h})^2) 
		+ \bB_{3,4}(\alpha, \beta, \kappa)
		h\sC_h^4( (R_h+\kappa h \dot{h})^2 )) 
	+\cO(\rho^2)=0.
\end{multline} 

Restricting \eqref{eq:JN} and \eqref{eq:JT} to $\rho=0$ yields the two equations
\begin{equation*}
\begin{gathered}
	\bB_{1,2}(\alpha, \beta, \kappa)\sC_h^2(R_h+\kappa h\dot{h})|_{\rho=0} =0 \\
	(\bA_1(\alpha, \kappa) \sC_h(R_h+\kappa h\dot{h}) + 
	\bB_{1,2}(\alpha, \beta, \kappa)h\sC_h^2(R_h+\kappa h\dot{h}))|_{\rho=0}=0
\end{gathered}
\end{equation*}
which, since $\bA_1(\alpha, \kappa)\neq 0,$ imply $\sC_h(R_h+\kappa h\dot{h})|_{\rho=0}=0.$
Contracting we have $\sC_h^2(R_h+\kappa h\dot{h})|_{\rho=0}=0,$ i.e.,
\begin{equation*}
	\sC_{h_0}(h_2) = -\frac1{2(n-1)\kappa}\sC_{h_0}^2(R_{h_0}),
\end{equation*}
 and hence
\begin{equation*}
\begin{gathered}
	\sC_{h_0}(R_{h_0}) + \kappa(h_0\sC_{h_0}(h_2)+(n-2)h_2) = 0\\
	\implies
	h_2 = -\frac1{(n-2)\kappa}\Big(\sC_{h_0}(R_{h_0})-\frac{\sC_{h_0}^2(R_{h_0})}{2(n-1)}h_0\Big)
	= -\frac1\kappa P(h_0)
\end{gathered}
\end{equation*}
where $P(h_0)$ denotes the Schouten tensor of $h_0.$\\

In particular it follows that
\begin{equation*}
	(R_h + \kappa h \dot{h})|_{\rho=0} = \mathrm{Weyl}_{h_0},
\end{equation*}
the Weyl curvature of $h_0,$ considered as a $(2,2)$-form.

To determine $h_4$ we differentiate \eqref{eq:JN} and \eqref{eq:JT} with respect to $\rho$ and set $\rho=0.$ First note that
\begin{equation*}
\begin{gathered}
\begin{multlined}
	\pa_{\rho}(\sC_h(R_h + \kappa h\dot{h}))|_{\rho=0} \\
	= \dot{\Ric}+ \kappa (h_2\sC_{h_0}(h_2) + h_0 (\tfrac12\sC_{h_0}^2(h_2^2)-\sC_{h_0}(h_2)^2 + \sC_{h_0}(h_4)) + (n-2)h_4), 
\end{multlined} \\
	\pa_{\rho}(\sC_h^2(R_h + \kappa h\dot{h}))|_{\rho=0}
	= \dot{\scal} + 2(n-1)\kappa
	(\tfrac12\sC_{h_0}^2(h_2^2)-\sC_{h_0}(h_2)^2 + \sC_{h_0}(h_4)),
\end{gathered}
\end{equation*}
where, using \cite[Theorem 1.174]{Besse}, cf. \cite[pg. 243]{Juhl:Book}, $\dot{\Ric}$ and $\dot{\scal}$ are given by
\begin{equation}\label{eq:DotRic}
\begin{gathered}
	\dot{\Ric} = \pa_t|_{t=0}\Ric(h_0+th_2)
	= \frac12\Delta_{L,h_0}(h_2) - \delta_{h_0}^*(\delta_{h_0}h_2) - \frac12\mathrm{Hess}_{h_0}(\sC_{h_0}(h_2)),\\
	\dot{\scal} = \pa_t|_{t=0}\scal(h_0+th_2)
	= \Delta_{h_0}(\sC_{h_0}(h_2))+\delta_{h_0}\delta_{h_0}h_2 - h_0(\Ric(h_0), h_2)\\
	= \sC_{h_0}(\dot{\Ric})
	+\tfrac12\sC_{h_0}^2(\Ric \owedge h_2)-\sC_{h_0}(h_2)\scal.
\end{gathered}
\end{equation}
Then, from \eqref{eq:JN} we have
\begin{multline}\label{eq:JNDer}
	2\kappa(-\bA_1(\alpha, \kappa)+(n-3)\bB_{1,2}(\alpha, \beta, \kappa))\sC_{h_0}(h_4) \\
	+ \lrpar{\frac\kappa2}
	(-\bA_1(\alpha, \kappa)+(2n-5)\bB_{1,2}(\alpha, \beta, \kappa)) \sC_{h_0}^2(h_2^2) \\
	-\kappa(-\bA_1(\alpha, \kappa)+2(n-2)\bB_{1,2}(\alpha, \beta, \kappa))\sC_{h_0}(h_2)^2 \\
	+
	(\bA_4(\alpha, \kappa) +\bA_3(\beta, \kappa) +(n+1)\bA_4(\beta, \kappa))
	\sC_{h_0}^4(\Weyl_{h_0}^2) 
	+\bB_{1,2}(\alpha, \beta, \kappa) \dot{\scal}
	 =0.
\end{multline} 
and from \eqref{eq:JT} we have 
\begin{multline}\label{eq:JTDer}
	(n-4)\kappa \bA_1(\alpha, \kappa) h_4 
	+ \kappa (\bA_1(\alpha, \kappa) + 2(n-3)\bB_{1,2}(\alpha, \beta, \kappa))h_0 \sC_{h_0}(h_4) \\
	-\kappa \bA_1(\alpha, \kappa) \sC_{h_0}(h_2^2) 
	+ \frac\kappa2(\bA_1(\alpha, \kappa)+(2n-5)\bB_{1,2}(\alpha, \beta, \kappa))h_0 \sC_{h_0}^2(h_2^2)\\
	+2\kappa\bA_1(\alpha, \kappa) h_2\sC_{h_0}(h_2) 
	- \kappa(\bA_1(\alpha, \kappa) +2(n-2)\bB_{1,2}(\alpha, \beta, \kappa))h_0\sC_{h_0}(h_2)^2\\
	+ \bA_3(\alpha, \kappa) \sC_{h_0}^3( \Weyl_{h_0}^2) 
		+ \bB_{3,4}(\alpha, \beta, \kappa)
		h_0\sC_{h_0}^4( \Weyl_{h_0}^2 ) 
	+\bA_1(\alpha, \kappa) \dot{\Ric} + \bB_{1,2}(\alpha, \beta, \kappa) h_0 \dot{\scal} =0.
\end{multline} 
The contraction of the latter is
\begin{multline}\label{eq:JTDerC}
	\kappa (2(n-2)\bA_1(\alpha, \kappa) +  2n(n-3)\bB_{1,2}(\alpha, \beta, \kappa)) \sC_{h_0}(h_4) \\
	+ \frac\kappa2((n-2)\bA_1(\alpha, \kappa)+n(2n-5)\bB_{1,2}(\alpha, \beta, \kappa)) \sC_{h_0}^2(h_2^2)\\
	- \kappa((n-2)\bA_1(\alpha, \kappa) +2n(n-2)\bB_{1,2}(\alpha, \beta, \kappa))\sC_{h_0}(h_2)^2\\
	+ (\bA_3(\alpha, \kappa) + n\bB_{3,4}(\alpha, \beta, \kappa))\sC_{h_0}^4( \Weyl_{h_0}^2 ) 
	+\bA_1(\alpha, \kappa) \sC_h(\dot{\Ric}) + n\bB_{1,2}(\alpha, \beta, \kappa) \dot{\scal} =0.
\end{multline} 

Multiplying \eqref{eq:JNDer} by $n$ and subtracting it from \eqref{eq:JTDerC} yields
\begin{multline*}
	\kappa (4(n-1)\bA_1(\alpha, \kappa) ) \sC_{h_0}(h_4) 
	+ \lrpar{\frac\kappa2}
	(2(n-1)\bA_1(\alpha, \kappa)) \sC_{h_0}^2(h_2^2) 
	- \kappa(2(n-1)\bA_1(\alpha, \kappa) )\sC_{h_0}(h_2)^2 \\
	+\bA_3(\alpha, \kappa) \sC_{h_0}^4( \Weyl_{h_0}^2 )  
	+\bA_1(\alpha, \kappa) \sC_h(\dot{\Ric})  
	=0
\end{multline*} 
and so we find
\begin{equation*}
	\sC_{h_0}(h_4) =
	-\frac14\sC_{h_0}^2(h_2^2) + \frac12\sC_{h_0}(h_2)^2 
	-\frac1{4\kappa(n-1)}\sC_h(\dot{\Ric})
	-\frac{\bA_3(\alpha, \kappa)}{4\kappa(n-1)\bA_1(\alpha, \kappa)}\sC_{h_0}^4(\Weyl_{h_0}^2).
\end{equation*}
Substituting into \eqref{eq:JTDer} we find
\begin{multline}\label{eq:h4}
	h_4 
	= 
	-\frac1{(n-4)}
	\Big( 
	-\frac{h_0\sC_h(\dot{\Ric})}{4\kappa(n-1)}
	+h_0(\tfrac14 \sC_{h_0}^2(h_2^2)
	- \tfrac12 \sC_{h_0}(h_2)^2)
	- \sC_{h_0}(h_2^2) 
	+2 h_2\sC_{h_0}(h_2) 
	+\frac{\dot{\Ric}}{\kappa}
	\Big) \\
	-\frac{\bB_{1,2}(\alpha, \beta, \kappa)h_0}{(n-4) \bA_1(\alpha, \kappa)}	
	\Big(
	\tfrac12 (2n-5) \sC_{h_0}^2(h_2^2)
	- 2(n-2)\sC_{h_0}(h_2)^2 
	+ \frac{\dot{\scal}}{\kappa} \Big)\\
	+2(n-3)
	\Big(
	-\frac14\sC_{h_0}^2(h_2^2) + \frac12\sC_{h_0}(h_2)^2 
	-\frac1{4\kappa(n-1)}\sC_h(\dot{\Ric})
	-\frac{\bA_3(\alpha, \kappa)}{4\kappa(n-1)\bA_1(\alpha, \kappa)}\sC_{h_0}^4(\Weyl_{h_0}^2)
	 \Big) \\
	 \\
	-\frac{
	4(n-1)\bA_3(\alpha, \kappa) \sC_{h_0}^3( \Weyl_{h_0}^2) 
	+ (4(n-1) \bB_{3,4}(\alpha, \beta, \kappa)-\bA_3(\alpha, \kappa))
		h_0\sC_{h_0}^4( \Weyl_{h_0}^2 ) 
	}{4\kappa(n-1)(n-4)\bA_1(\alpha, \kappa)} 
\end{multline} 
%

\section{Graham-Lee existence} \label{sec:GL}

In this section we use the results of Graham-Lee \cite{Graham-Lee} to show that there are many Poincar\'e-Lovelock metrics on the interior of the Euclidean ball. 

\begin{theorem}\label{thm:GLA}
Let $M = \bbB^{n+1},$ $n\geq 4,$ ${\df h}$ the hyperbolic metric on $M$ and $\hat {\df h} = \rho^2{\df h}$ the round metric on $\bbS^n = \pa M.$
For any smooth Riemannian metric $\hat g$ on $\bbS^{n}$ which, for some $\theta>0,$ is sufficiently close in $\cC^{2,\theta}(M, \cS^2(M))$ to $\hat {\df h}$ there is a metric $g \in \cC^{\infty}(M, \cS^2(M))\cap \rho^{-2}\cC^{0}(\bar M, \cS^2(M))$ satisfying 
\begin{equation}\label{eq:GLeq1}
	\begin{cases}
	\cR^{(2q)}_g - \lambda^{(2q)}g = 0\\
	x^2 g\rest{\pa M} \text{ is conformal to } \bar g,
	\end{cases}
\end{equation}
and, for any $\alpha$ such that $\mathrm{LimSec}(\alpha) \neq \emptyset,$ there is a metric 
$g \in \cC^{\infty}(M, \cS^2(M))\cap \rho^{-2}\cC^{0}(\bar M, \cS^2(M))$ satisfying
\begin{equation}\label{eq:GLeq2}
	\begin{cases}
	\sum \alpha_q (\cE^{(2q)}_g - (1-\tfrac{n+1}{2q})\lambda^{(2q)}g) =0 \\
	x^2 g\rest{\pa M} \text{ is conformal to } \bar g.
	\end{cases}
\end{equation}
\end{theorem}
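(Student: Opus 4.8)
The plan is to follow the method of Graham and Lee \cite{Graham-Lee} for the Einstein equation on the ball, the crucial point being that after gauge-fixing the linearization of the Lovelock equation at a constant sectional curvature metric is a non-zero constant multiple of the gauge-fixed linearized Einstein operator analyzed there. Fix $\kappa\in\mathrm{LimSec}(\alpha)$, which is non-empty by hypothesis, and set $\df h_\kappa=\kappa^{-1}\df h$: a complete metric of constant sectional curvature $-\kappa$ on $\bbB^{n+1}$, conformally compact with conformal infinity $[\hat{\df h}]$, which is a solution of \eqref{eq:GLeq2} precisely because $\kappa\in\mathrm{LimSec}(\alpha)$ (see \S\ref{sec:FG}). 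For \eqref{eq:GLeq1} with a single $q$ such that $2q\le n$ one instead works with $\df h$ itself (so $\kappa=1$), which satisfies $\cR^{(2q)}_{\df h}=\lambda^{(2q)}\df h$ by \S\ref{sec:FG}, the relevant non-vanishing constant then being $\lrpar{-\tfrac12}^{q-1}\tfrac{(n-2)!}{2}\tfrac{(2q)!}{(n-2q)!}$.

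Given a metric $\hat g$ on $\bbS^n$ close in $\cC^{2,\theta}$ to $\hat{\df h}$, the first step is to construct an approximate solution $\bar g=\bar g(\hat g)$: a conformally compact metric on $\bbB^{n+1}$ with conformal infinity $[\hat g]$, depending continuously on $\hat g$ and equal to $\df h_\kappa$ when $\hat g=\hat{\df h}$, so that $\bar g-\df h_\kappa$ is small in a suitable weighted H\"older space when $\hat g$ is close to $\hat{\df h}$. This can be carried out as in \cite[\S5]{Graham-Lee} or by truncating the formal Fefferman--Graham expansion produced in \S\ref{sec:FG}. Next, following \cite{DeLimaSantos:Defs}, one gauge-fixes by means of the Bianchi one-form $\cB_{\bar g}(g)=\delta_g\bar g+\tfrac12\,d(\tr_g\bar g)$ relative to $\bar g$ and considers
\begin{equation*}
	\cP(g)=\sum\alpha_q\big(\cE^{(2q)}_g-(1-\tfrac{n+1}{2q})\lambda^{(2q)}g\big)+\delta_g^*\cB_{\bar g}(g),
	\qquad
	\cP^{(2q)}(g)=\cR^{(2q)}_g-\lambda^{(2q)}g+\delta_g^*\cB_{\bar g}(g),
\end{equation*}
which, after the usual rescaling by a power of $\rho$, are smooth maps between weighted H\"older spaces (being polynomial in the curvature of $g$). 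A zero $g$ of $\cP$ with $\cB_{\bar g}(g)=0$ solves \eqref{eq:GLeq2}, and similarly for $\cP^{(2q)}$ and \eqref{eq:GLeq1}; the gauge term is what makes the linearization of $\cP$ at $\df h_\kappa$ elliptic, even though the Lovelock tensors are not elliptic in general.

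The core of the argument is the identification of $D\cP\rest{\df h_\kappa}$. Since $\cR^{(2q)}_g=\sC_g^{2q-1}R_g^q$ and $R_{\df h_\kappa}=-\tfrac\kappa2\df h_\kappa^2$ by \eqref{eq:CurvCstSec}, each factor of $R$ that is not differentiated in the linearization contributes a power of $\df h_\kappa^2$, and the contraction identities of Lemma \ref{lem:DoubleForms} collapse the part of the linearization of $\cR^{(2q)}$ at $\df h_\kappa$ that is second order in $h$ into an explicit scalar times the linearization of $\cR^{(2)}=\Ric$, the remaining terms being algebraic in $h$. Summing over $q$, the coefficient of $\dot{\Ric}$ assembles into $\bA_1(\alpha,\kappa)$ and the algebraic and gauge terms assemble so that
\begin{equation*}
	D\cP\rest{\df h_\kappa}=\bA_1(\alpha,\kappa)\,L_\kappa,
\end{equation*}
where $L_\kappa$ is exactly the gauge-fixed linearized Einstein operator of \cite{Graham-Lee} at the constant curvature metric $\df h_\kappa$, and likewise $D\cP^{(2q)}\rest{\df h}$ is the corresponding non-zero multiple of $L_1$. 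Since $\kappa\in\mathrm{LimSec}(\alpha)$ forces $\bA_1(\alpha,\kappa)\neq0$, the mapping properties established in \cite{Graham-Lee} show that $D\cP\rest{\df h_\kappa}$ is an isomorphism $\rho^{\eps}\cC^{2,\theta}(M,\cS^2(M))\to\rho^{\eps}\cC^{0,\theta}(M,\cS^2(M))$ for a suitable weight $\eps\in(0,n)$. By continuity of the linearization in the base metric, $D\cP\rest{\bar g}$ is then also an isomorphism once $\hat g$ is sufficiently close to $\hat{\df h}$, and the inverse function theorem (following \cite{Graham-Lee}) produces $g$ with $\cP(g)=0$ and $g-\bar g$ small in $\rho^{\eps}\cC^{2,\theta}(M,\cS^2(M))$; in particular $x^2g\rest{\pa M}$ lies in $[\hat g]$.

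It remains to recover the gauge, that is, to show $\cB_{\bar g}(g)=0$. Applying $\delta_g$ to $\cP(g)=0$ and using that $\sum\alpha_q\cE^{(2q)}_g$ is divergence-free, being a linear combination of Lovelock tensors (\cite[Proposition 4.11]{Besse}), one finds that the one-form $V=\cB_{\bar g}(g)$ satisfies the homogeneous second-order elliptic equation $\delta_g\delta_g^*V=0$; the indicial roots of $\delta_g\delta_g^*$ at $\pa M$ together with the decay $V\in\rho^{\eps}\cC^{1,\theta}$ inherited from the construction admit no non-trivial solution, so $V\equiv0$ and $g$ solves \eqref{eq:GLeq2} (resp. \eqref{eq:GLeq1}). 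Interior smoothness $g\in\cC^{\infty}(M,\cS^2(M))$ follows from elliptic regularity for the gauge-fixed equation near $\df h_\kappa$, and $g\in\rho^{-2}\cC^{0}(\bar M,\cS^2(M))$ from conformal compactness. The principal obstacle is the linearization step: one must verify within the double-form calculus that the higher Lovelock terms degenerate at a constant curvature metric precisely into a multiple of the linearized Einstein operator with coefficient $\bA_1(\alpha,\kappa)$, and also confirm that the failure of ellipticity of the Lovelock operator away from $\df h_\kappa$ obstructs neither the inverse function theorem---which uses invertibility of the linearization only at the reference metric---nor the gauge recovery, which rests only on the genuine divergence-freeness of the Lovelock tensors.
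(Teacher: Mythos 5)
Your overall strategy is the paper's: gauge-fix, identify the linearization at a constant curvature metric as $\bA_1(\alpha,\kappa)$ times the Graham--Lee operator, run the inverse function theorem in weighted H\"older spaces, and recover the gauge from the Bianchi identity. But there is a genuine gap at precisely the step you single out as the principal obstacle. With the gauge term $\delta_g^*\cB_{\bar g}(g)$ alone, the identity $D\cP\rest{\df h_\kappa}=\bA_1(\alpha,\kappa)L_\kappa$ is false once some $\alpha_q\neq 0$ with $q\geq 2.$ The linearization of $\cR^{(2q)}_g$ at a constant curvature metric (Lemma \ref{lem:LinCurv}) contains, besides the term $\delta_{g_0}^*\delta_{g_0}\cG^{(2)}_{g_0}(r)$ that $\delta_g^*\cB$ can cancel, a pure-trace second-order term $g_0\,\delta_{g_0}\delta_{g_0}\cG^{(2)}_{g_0}(r)$ whose coefficient is proportional to $(q-1)$ (with a further such contribution from the $\scal^{(2q)}(g)\,g$ part of $\cE^{(2q)}$); by \eqref{eq:LinE} the summed coefficient for the Lovelock combination is a nonzero multiple of $\bA_1(\alpha,\kappa).$ This is exactly why the paper's gauge term \eqref{eq:DefPhi} is $(c_1\delta_g^*+c_2\,g\delta_g)$ applied to the Bianchi one-form with $c_2\neq 0$: for $\beta_q=-\alpha_q/(2q)$ it collapses to $-\bA_1(\alpha,\kappa)\,\cG^{(2)}_g\delta_g^*\omega,$ not $-\bA_1(\alpha,\kappa)\,\delta_g^*\omega.$ With your gauge the linearized operator retains an uncancelled second-order term coupling the trace-free part of $r$ into the trace direction, so it is not block-diagonal in the trace/trace-free splitting, it is not the operator whose indicial roots and isomorphism properties Graham--Lee establish, and the appeal to their mapping theorems does not apply as stated. (For $q=1$ the offending coefficient vanishes, which is why $\delta_g^*$ alone suffices in the Einstein case and why the discrepancy is easy to miss.)

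Two smaller points. First, once the gauge term is $\cG^{(2)}_g\delta_g^*\omega,$ applying $\delta_g$ to the equation yields $\delta_g\cG^{(2)}_g\delta_g^*\omega=0,$ and the paper kills $\omega$ by the Bochner/maximum-principle argument of \cite[Lemma 2.2]{Graham-Lee}, which requires only boundedness of $\omega$ but does require $\Ric(g)\leq K<0$ --- this is why uniformly negative Ricci curvature is arranged in Theorem \ref{thm:GL41}. Your proposed equation $\delta_g\delta_g^*V=0$ is not the one that arises, and the asserted decay $V=\cO(\rho^{\eps})$ is delicate: the gauge one-form is a priori only bounded (it is $\rho^{-1}$ times a quantity vanishing at $\pa M$), so a decay-plus-indicial-roots argument needs justification that you do not supply. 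Second, for \eqref{eq:GLeq1} the tensor $\cR^{(2q)}_g$ is \emph{not} divergence-free; the gauge recovery must first apply the gravitation operator $\cG^{(2q)}_g,$ whose composition with $\delta_g$ annihilates $\cR^{(2q)}_g$ by the second Bianchi identity and annihilates $\lambda^{(2q)}g$ by metric compatibility, as in part (a) of Lemma \ref{lem:GL22}; your divergence argument as written covers only \eqref{eq:GLeq2}.
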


(In \cite{Graham-Lee}, the equation \eqref{eq:GLeq1} is treated with $q=1.$ Solving \eqref{eq:GLeq2} with $\alpha = e_q$ (i.e., $\alpha_j = \delta_{jq}$) gives a solution to \eqref{eq:GLeq1}. We treat both equations in parallel as it makes it simpler to compare with  \cite{Graham-Lee}.)

To compensate for diffeomorphism invariance, we will study a perturbation of the equation of the previous section,
\begin{equation*}
	Q_{(\alpha,\beta)}(g,t) 
	= \sum \alpha_q (\cR^{(2q)}_g - \lambda^{(2q)}g) + \beta_q (\ell^{(2q)}_g - (n+1)\lambda^{(2q)})g -\Phi_{(\alpha,\beta)}(g,t)=0,
\end{equation*}
where $t$ is an auxiliary metric and $\Phi_{\alpha, \beta}(g,t)$ an operator specified below \eqref{eq:DefPhi}.
We will show that the linearization of $Q_{(\alpha,\beta)}(g,t)$ is asymptotically equal to a linear combination of $(\Delta_g + 2n)$ in pure-trace directions and $(\Delta_g-2)$ in trace-free directions.

Once the linearizations are computed, the arguments in \cite{Graham-Lee} will apply virtually unchanged.
Given a metric $\hat g$ on $\bbS^{n+1},$ we define an asymptotically hyperbolic metric $T(\hat g)$ extending the conformal class of $\hat g$ into $M = \bbB^{n+1}$ and equal to ${\df h}$ away from $\pa \bbB^{n+1}$ in \eqref{eq:DefT} below.
Using the linearization of $Q_{(\alpha,\beta)}(g,t),$ and an analysis of the corresponding `indicial operators', Graham-Lee constructed an operator $S(\hat g)$ depending smoothly on $\hat g$ such that, e.g.,
\begin{equation*}
	Q_{(\alpha,\beta)}(S(\hat g), T(\hat g)) = \cO(\rho^{n-2})
\end{equation*}
essentially by showing that the construction of the asymptotic expansion in the previous section can be carried out smoothly.
Using these approximate solutions, the arguments in \cite{Graham-Lee} show that as long as $\hat g$ is sufficiently close to the round metric and
\begin{equation*}
	\sum \lambda^{(2q)}
	( q(\alpha_q+(n+1)\beta_q) ) \neq 0, \quad
	\bA_1(\alpha, \kappa)\neq 0,
\end{equation*}
there is a metric $g$ extending the conformal class of $\hat g,$ such that
\begin{equation*}
	Q_{(\alpha,\beta)}(g, T(\hat g)) = 0.
\end{equation*}
Finally, in Lemma \ref{lem:GL22} below we show that 
if $(\alpha, \beta)$ is given by $(e_q,0)$ or if $(\alpha,\beta)$ satisfies $\beta_q = -\tfrac1{2q}\alpha_q,$ then 
\begin{equation*}
	Q_{(\alpha,\beta)}(g, T(\hat g)) =0 \implies \Phi_{(\alpha,\beta)}(g, T(\hat g))=0.
\end{equation*}
$ $\\

We start by computing the linearizations of the generalized Ricci tensors and Lovelock scalars.
These are due to \cite{DeLimaSantos:Defs} at constant curvature metrics and \cite{CaulaDeLimaSantos} for slightly more general metrics.

Let us introduce the following notation, with $g$ and $t$ two metrics on $M,$
\begin{equation*}
\begin{aligned}
	\delta_g:& \text{ symmetric 2-tensors } \lra \text{ one-forms }, \quad
	(\delta_gt)_i = -g^{jk}t_{ij,k}, \\
	\delta_g:& \text{ one-forms } \lra \text{ functions }, \quad
	\delta_g\omega = -g^{jk}\omega_{j,k}, \\
	\delta_g^*:& \text{ one-forms } \lra \text{ symmetric 2-tensors }, \quad
	(\delta_g^*\omega)_{ij} = \tfrac12(\omega_{i,j} + \omega_{j,i}), \\
	gt^{-1}:& \text{ one-forms } \lra \text{ one-forms }, \quad
	(gt^{-1}\omega)_i = g_{ij}(t^{-1})^{jk}\omega_k, \\
	\cG_g^{(2q)}:& \text{ symmetric 2-tensors } \lra \text{ symmetric 2-tensors }, \quad
	\cG_g^{(2q)}(\Phi)_{ij} = \Phi_{ij} - \tfrac1{2q}g^{k\ell}\Phi_{k\ell}g_{ij}.
\end{aligned}
\end{equation*}
The latter, known as the  $2q$-gravitation operator, has the key property that it takes $\cR^{(2q)}_g$ to $\cE^{(2q)}_g,$ the $(2q)^{\text{th}}$ Einstein-Lovelock tensor, and hence the second Bianchi identities read
\begin{equation*}
	\delta_g\cG_g^{(2q)}(\cR^{(2q)}) = 0.
\end{equation*}
We point out that, if $\omega$ is a one-form, then
\begin{equation*}
	(\cG_g^{(2)}\delta_g^*\omega)_{ij} = \tfrac12(\omega_{i,j}+\omega_{j,i}-g^{st}\omega_{s,t}g_{ij})
	=\delta_g^*\omega+\tfrac12g\delta_g\omega
\end{equation*}

\begin{lemma}\label{lem:LinCurv}
Let $(M, g_0)$ be an asymptotically hyperbolic manifold, $\rho$ a boundary defining function that is special for $g_0,$ and let $r$ be a symmetric two tensor of the form $r = \rho^N \bar r$ with $\bar r \in \cC^2(\bar M, \cS^2(M)).$

The linearization of the map $g \mapsto \cR^{(2q)}_g-\lambda^{(2q)}g$ at $g_0$ in the direction of $r$ satisfies
\begin{multline}\label{eq:LinL}
	\cD \lrpar{ \cR^{(2q)}_g -\lambda^{(2q)}g}_{g_0}(r) \\
	= \frac{\lambda^{(2q)}}{n(n-1)}
	\Big( 
	(
	(2q-nq-1)\sC_{g_0}(r) 
	-(q-1)
	( \tfrac12\Delta_{g_0}(\sC_{g_0}(r)) +\delta_{g_0}\delta_{g_0} \cG_{g_0}^{(2)}(r) )){g_0} \\
	- (n-2q+1)
	(\tfrac12\Delta_{g_0}(r)-\delta_{g_0}^*\delta_{g_0}\cG_{g_0}^{(2)}(r)-r)
	\Big) + \cO(\rho^{N+1}).
\end{multline}
If $g_0$ has constant sectional curvature then the $\cO(\rho^{N+1})$ term is identically zero.

The linearization of the map $g \mapsto (\scal^{(2q)}(g)-(n+1)\lambda^{(2q)})g$ at $g_0$ in the direction of $r$ satisfies
\begin{multline}\label{eq:Linell}
	\cD \lrpar{ (\scal^{(2q)}(g)-(n+1)\lambda^{(2q)})g }_{g_0}(r) \\
	= -\frac{q\lambda^{(2q)}}n
	\Big(n\sC_{g_0}(r) 
	+ \tfrac12\Delta_{g_0}(\sC_{g_0}(r)) +\delta_{g_0}\delta_{g_0} \cG_{g_0}^{(2)}(r) 
	 \Big)g_0 + \cO(\rho^{N+1}).
\end{multline}
If $g_0$ has constant sectional curvature then the $\cO(\rho^{N+1})$ term is identically zero.
\end{lemma}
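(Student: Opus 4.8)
The plan is to exploit that both quantities are algebraic in the curvature double form — $\cR^{(2q)}_g=\sC_g^{2q-1}(R_g^q)$ and $\scal^{(2q)}(g)=\sC_g(\cR^{(2q)}_g)=\sC_g^{2q}(R_g^q)$ — so that their linearizations are read off from the product rule together with the linearizations of the Riemann $(2,2)$-form and of the contraction operator (which depends on $g$ through $g^{-1}$). With $g_s=g_0+sr$,
\begin{equation*}
	\cD\lrpar{\cR^{(2q)}_g}_{g_0}(r)
	=q\,\sC_{g_0}^{2q-1}\lrpar{(\cD R_{g_0}(r))\,R_{g_0}^{q-1}}
	+\lrpar{\cD\sC_{g_0}^{2q-1}}(r)\lrpar{R_{g_0}^q},
\end{equation*}
using Kulkarni--Nomizu commutativity on symmetric double forms, the last term being the purely algebraic contribution of $\partial_s g_s^{-1}|_{s=0}=-g_0^{-1}rg_0^{-1}$. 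Since the curvature involves only two derivatives of the metric, this presents the linearization as a differential operator of order two in $r$ whose coefficients are universal polynomial expressions in $g_0$, $g_0^{-1}$ and $R_{g_0}$ alone (no covariant derivatives of $R_{g_0}$ enter). It is therefore enough to compute the operator where $R_{g_0}$ is as simple as possible and separately to control its dependence on $R_{g_0}$.

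First I would take $g_0$ of constant sectional curvature $-1$, so that $R_{g_0}=-\tfrac12g_0^2$ by \eqref{eq:CurvCstSec} and every $R_{g_0}^{\ell}$ is a multiple of $g_0^{2\ell}$. Then the algebraic term is a combination of contractions of powers of the metric against $r$, and the term $q\,\sC_{g_0}^{2q-1}\bigl((\cD R_{g_0}(r))\,g_0^{2q-2}\bigr)$ collapses, via Lemma \ref{lem:DoubleForms} applied to $\sC_g^p(g^k\omega)$ with $\omega=\cD R_{g_0}(r)\in\Omega^{2\otimes2}(M)$, to the single and double contractions of $\cD R_{g_0}(r)$ itself. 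For the latter I would invoke the classical linearization of the Riemann tensor \cite[Theorem 1.174]{Besse} — which at constant curvature differs from a universal $\nabla^2r$ expression only by an algebraic $g_0\ast r$ term — and observe that after the contractions one is left precisely with the combinations $\tfrac12\Delta_{g_0}r-\delta_{g_0}^*\delta_{g_0}\cG_{g_0}^{(2)}(r)-r$ and $\tfrac12\Delta_{g_0}\sC_{g_0}(r)+\delta_{g_0}\delta_{g_0}\cG_{g_0}^{(2)}(r)$ familiar from $\cD\Ric$ and $\cD\scal$, cf. \eqref{eq:DotRic}; this is the specialization already recorded in \cite{DeLimaSantos:Defs}. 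It is cleaner to first split $\cD R_{g_0}(r)$ into its scalar, trace-free Ricci and Weyl parts as in Remark \ref{rmk:Effectives}: a short computation with Lemma \ref{lem:DoubleForms} shows the Weyl part is annihilated by the iterated contractions against $g_0$, which is why only $\sC_{g_0}(r)$ and $r$ and their second order images appear. Assembling the binomial and factorial factors — the powers $(-\tfrac12)^q$ together with these factors reconstitute $\lambda^{(2q)}/(n(n-1))$ and the multiplicities reduce to $(2q-nq-1)$, $(q-1)$ and $(n-2q+1)$ — then yields \eqref{eq:LinL} with the remainder identically zero; as a check, at $q=1$ this reproduces the linearization of $\Ric_g+ng$ used in \cite{Graham-Lee}.

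To reach a general asymptotically hyperbolic $g_0$ with $\rho$ special, I would write $R_{g_0}=-\tfrac12g_0^2+\Psi_{g_0}$, where $\Psi_{g_0}$ is the double form assembling the Weyl tensor and the trace-free Ricci tensor of $g_0$; asymptotic hyperbolicity forces $\Psi_{g_0}$ and its covariant derivatives to vanish at $\pa M$. Substituting this into the polynomial expression for the linearization, the difference from the constant-curvature operator is a linear operator in $r$ of order at most two each of whose coefficients is divisible by $\Psi_{g_0}$; applied to $r=\rho^N\bar r$ with $\bar r\in\cC^2(\bar M,\cS^2(M))$, and counting the $\rho^2$ gained at every contraction against $g_0^{-1}$ exactly as in \cite{Graham-Lee} and \cite{CaulaDeLimaSantos}, this produces a term that is $\cO(\rho^{N+1})$ and that vanishes when $\Psi_{g_0}\equiv0$. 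This completes \eqref{eq:LinL}.

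Finally I would derive \eqref{eq:Linell} by contracting \eqref{eq:LinL}. From $\scal^{(2q)}(g)=\sC_g(\cR^{(2q)}_g)$, linearizing the outer contraction against $\cR^{(2q)}_{g_0}=\lambda^{(2q)}g_0+\cO(\rho)$ gives $\cD\scal^{(2q)}_{g_0}(r)=-\lambda^{(2q)}\sC_{g_0}(r)+\sC_{g_0}\bigl(\cD\cR^{(2q)}_{g_0}(r)\bigr)+\cO(\rho^{N+1})$, the error being the pairing of $r=\rho^N\bar r$ with $\cR^{(2q)}_{g_0}-\lambda^{(2q)}g_0$; substituting \eqref{eq:LinL}, using $\sC_{g_0}(g_0)=n+1$ and collapsing the factorials gives the displayed scalar. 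Multiplying by $g_0$ and adding $\bigl(\scal^{(2q)}(g_0)-(n+1)\lambda^{(2q)}\bigr)r=\cO(\rho^{N+1})$, the contribution of differentiating the explicit $g$ factor, yields \eqref{eq:Linell}. I expect the only genuine difficulty to lie inside the constant-curvature step — organizing the iterated contractions of $(\cD R_{g_0}(r))\,g_0^{2q-2}$ through Lemma \ref{lem:DoubleForms} and checking that every combinatorial factor coalesces into the precise coefficients displayed in \eqref{eq:LinL}--\eqref{eq:Linell}; the asymptotically hyperbolic remainder is then a routine weight count in the spirit of \cite{Graham-Lee}.
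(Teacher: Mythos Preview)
Your proposal is correct and follows essentially the same route as the paper: split the linearization via the product rule into the contribution from differentiating the $2q-1$ contractions and that from differentiating the curvature factors, replace $R_{g_0}$ by $-\tfrac12 g_0^2$ up to a remainder controlled by weight-counting, and reduce via Lemma~\ref{lem:DoubleForms} to the single and double contractions of $\dot R$ supplied by \cite[Theorem~1.174]{Besse}. The one shortcut in the paper you might borrow is to compute the algebraic piece $\cL_0^{(2q)}(r)$ by differentiating the identity $\sC_g^{2q-1}\bigl((-\tfrac12 g^2)^q\bigr)=\lambda^{(2q)}g$ at $g_0$ in the direction $r$, rather than tracking the individual $\partial_s g^{-1}$ terms; the paper also recomputes the scalar case $\bar\cL_0^{(2q)}+\bar\cM_0^{(2q)}$ in parallel rather than contracting \eqref{eq:LinL}, but your contraction argument is equally valid.
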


In particular, this implies that
the linearization of $g \mapsto \cE^{(2q)}_g - (1-\tfrac{n+1}{2q})\lambda^{(2q)}g$ at $g_0$ in the direction of $r$ satisfies 
\begin{multline}\label{eq:LinE}
	\cD(\cE^{(2q)}_g - (1-\tfrac{n+1}{2q})\lambda^{(2q)}g)_{g_0}(r) \\
	= \frac{(n-2q+1)\lambda^{(2q)}}{n(n-1)}
	\Big( 
	\tfrac12{g_0}
	(\tfrac12\Delta_{g_0}(\sC_{g_0}(r)) +\delta_{g_0}\delta_{g_0} \cG_{g_0}^{(2)}(r) +(n-2)\sC_{g_0}(r)) \\
	- (\tfrac12\Delta_{g_0}r-\delta_{g_0}^*\delta_{g_0}\cG_{g_0}^{(2)}(r)-r)
	 \Big) + \cO(\rho^{N+1}),
\end{multline}
where, 
if $g_0$ has constant sectional curvature, then the $\cO(\rho^{N+1})$ term is identically zero.

\begin{proof}
Let $g=g(s)$ be a family of metrics on $M$ with $g(0) = g_0,$ $g'(0) = r.$

We write the linearization of $\cR^{(2q)}_g$ at $g_0$ as a sum of two operators according to
\begin{equation*}
\begin{multlined}
	\cD\lrpar{ \cR^{(2q)}_g }_{g_0}(r)
	= \frac{\pa}{\pa s}\rest{s=0}\lrpar{ \sC_g^{2q-1}(R^{q}) } \\
	= \frac{\pa}{\pa s}\rest{s=0}\lrpar{ \sum g^{a_1b_1}g^{a_2b_2}\cdots g^{a_{2q-1}b_{2q-1}} 
	R_{c_1d_1e_1f_1}(g)\cdots R_{c_{q}d_qe_qf_q}(g) } \\
	= \sum \sum {g_0}^{a_1b_1} \cdots \frac{\pa}{\pa s}\rest{s=0}(g^{a_kb_k}) \cdots {g_0}^{a_{2q-1}b_{2q-1}} 
	R_{c_1d_1e_1f_1}(g_0)\cdots R_{c_{q}d_qe_qf_q}(g_0) \\
	+ \sum \sum {g_0}^{a_1b_1} \cdots {g_0}^{a_{2q-1}b_{2q-1}} 
	R_{c_1d_1e_1f_1}(g_0)\cdots \frac{\pa}{\pa s}\rest{s=0}(R_{c_qd_qe_qf_q}(g)) \cdots R_{c_qd_qe_qf_q}(g_0) \\
	= \cL^{(2q)}(r) + \cM^{(2q)}(r).
\end{multlined}
\end{equation*}
For $\cL^{(2q)}(r)$ note that the factors $g_0^{a_ib_i}$ are $\cO(\rho^2),$ 
$\frac{\pa}{\pa s}\rest{s=0}(g^{a_kb_k}) = -g_0^{a_k\wt a_k}r_{\wt a_k \wt b_k} g_0^{\wt b^kb^k}$ is $\cO(\rho^{4+N})$ and each factor of $R_{c_id_ie_if_i}(g_0)$ is equal to 
$(-\tfrac12 g_0^2)_{c_id_ie_if_i} + \cO(\rho^{-3}).$
Hence
\begin{multline*}
	\cL^{(2q)}(r) \\
	= \sum \sum g_0^{a_1b_1} \cdots \frac{\pa}{\pa s}\rest{s=0}(g^{a_kb_k}) \cdots g_0^{a_{2q-1}b_{2q-1}} 
	(-\tfrac12 g_0^2)_{c_1d_1e_1f_1}\cdots (-\tfrac12 g_0^2)_{c_{q}d_qe_qf_q}  + \cO(\rho^{N+1}) \\
	= \cL^{(2q)}_0(r) + \cO(\rho^{N+1})
\end{multline*}
and if $g_0$ has constant sectional curvature then $\cL^{(2q)}(r) = \cL^{(2q)}_0(r).$
We can use Lemma \ref{lem:DoubleForms} to see that $\cL^{(2q)}_0(r)$ satisfies
\begin{equation*}
\begin{gathered}
	\frac{\pa}{\pa s}\rest{s=0}(\lrpar{ \sC^{2q-1}_{g}((-\tfrac12{g}^2)^q) }
	= \cL_{0}^{(2q)}(r) + (2q)(-\tfrac12)^q \sC^{2q-1}_{g_0}( g_0^{2q-1} r) \\
	\implies
	\cL_{0}^{(2q)}(r) 
	= \frac{(-1)^q}{2^q}\frac{(2q)!(m-2)!}{(m-2q)!} (2q-1) (r-{g_0}\sC_{g_0}(r)).
\end{gathered}
\end{equation*}

Similarly, in the expression for $\cM^{(2q)}(r)$ note that $\dot{R_g} = \pa_s|_{s=0}(R_g)$ is $\cO(\rho^{t-2})$ (e.g., from \cite[Theorem 1.174(c)]{Besse}), hence 
\begin{multline*}
	\cM^{(2q)}(r) 
	= \sum \sum g_0^{a_1b_1} \cdots g_0^{a_{2q-1}b_{2q-1}} \\
	(-\tfrac12 g_0^2)_{c_1d_1e_1f_1}\cdots 
	\frac{\pa}{\pa s}\rest{s=0}(R_{c_qd_qe_qf_q}(g)) \cdots 
	(-\tfrac12 g_0^2)_{c_qd_qe_qf_q} + \cO(\rho^{N+1}) \\
	= \cM^{(2q)}_0(r) + \cO(\rho^{N+1})
\end{multline*}
and if $g_0$ has constant sectional curvature then $\cM^{(2q)}(r) = \cM^{(2q)}_0(r).$
We can compute $\cM^{(2q)}_0(r)$ as
\begin{multline*}
	\cM_{0}^{(2q)}(r) 
	=q\sC_{g_0}^{2q-1}((-\tfrac12g_0^2)^{q-1}\dot{R_g}) \\
	=  -\frac{(-1)^{q}}{2^{q}} \frac{(2q)!(m-3)!}{(m-2q)!} \lrpar{
	(q-1)g_0\sC_{g_0}^2(\dot{R_g}) + (m-2q)\sC_{g_0}(\dot{R_g}) }
\end{multline*}
and hence
\begin{equation}\label{eq:LinLpre}
\begin{gathered}
\begin{multlined}
	\cD\lrpar{ \cR^{(2q)}_g }_{g_0}(r)
	= \frac{(-1)^{q}}{2^{q}} \frac{(2q)!(m-3)!}{(m-2q)!} \Big( (m-2)(2q-1)(r-{g_0}\sC_{g_0}(r)) \\
	-(q-1){g_0}\sC_{g_0}^2(\dot{R_g}) - (m-2q)\sC_{g_0}(\dot{R_g}) \Big) + \cO(\rho^{N+1}) 	
\end{multlined} \\
\begin{multlined}
	= \frac{\lambda^{(2q)}}{n(n-1)}
	 \Big( (n-1)(2q-1)(r-{g_0}\sC_{g_0}(r)) \\
	-(q-1){g_0}\sC_{g_0}^2(\dot{R_g}) - (n-2q+1)\sC_{g_0}(\dot{R_g}) \Big) + \cO(\rho^{N+1}) 	
\end{multlined} 
\end{gathered}
\end{equation}
The variation of the curvature tensor has contractions \cite[Theorem 1.174]{Besse}, \cite[(3.7), (3.8)]{DeLimaSantos:Defs}
\begin{equation*}
\begin{gathered}
	\sC_{g_0}(\dot{R_g}) = \tfrac12\Delta_{g_0}r-\delta_{g_0}^*\delta_{g_0}\cG_{g_0}^{(2)}(r)
		+\tfrac12( \cR_g^{(2)}\circ r + r \circ \cR_g^{(2)} ) \\
	\sC_{g_0}^2(\dot{R_g}) = \tfrac12\Delta_{g_0}(\sC_{g_0}(r)) +\delta_{g_0}\delta_{g_0} \cG_{g_0}^{(2)}(r) + g_0(\cR_{g_0}^{(2)}, r)
\end{gathered}
\end{equation*}
and we note that 
\begin{equation*}
\begin{gathered}
	\tfrac12(\cR_g^{(2)}\circ r + r \circ \cR_g^{(2)})_{ij} 
	=-(m-1)r + \cO(\rho^{N+1}) \\
	g_0(\cR_{g_0}^{(2)}, r)
	= -(m-1)\sC_{g_0}(r) + \cO(\rho^{N+1})
\end{gathered}
\end{equation*}
with the $\cO(\rho^{N+1})$ terms vanishing if $g_0$ has constant sectional curvature.
Substituting these expressions into \eqref{eq:LinLpre} yields \eqref{eq:LinL}.\\

Similarly decomposing 
$\cD(\scal^{(2q)}(g))_{g_0}(r) = \bar\cL_0^{(2q)}(r) + \bar\cM_0^{(2q)}(r) + \cO(\rho^{N+1})$ we find
\begin{equation*}
\begin{gathered}
	\frac{\pa}{\pa s}\rest{s=0}\lrpar{ \sC^{2q}_{g}((-\tfrac12{g}^2)^q) }
	= \bar\cL_{0}^{(2q)}(r) + (2q)(-\tfrac12)^q \sC^{2q}_{g_0}( g_0^{2q-1} r) \\
	\implies
	\bar\cL_{0}^{(2q)}(r) 
	= -(2q)\frac{(-1)^q}{2^q}\frac{(2q)!(m-1)!}{(m-2q)!} \sC_{g_0}(r)
\end{gathered}
\end{equation*}
and
\begin{equation*}
	\bar\cM_{0}^{(2q)}(r) 
	=q\sC_{g_0}^{2q}((-\tfrac12{g_0}^2)^{q-1}\dot{R_g}) \\
	= q \lrpar{-\frac12}^{q-1}\frac{(2q)!}{(m-2q)!}\frac{(m-2)!}2 \sC_{g_0}^2(\dot{R_g})
\end{equation*}
so that
\begin{equation*}
\begin{gathered}
	\cD(\scal^{(2q)}(g))_{g_0}(r) 
	= q \lrpar{-\frac12}^{q-1}\frac{(2q)!}{(m-2q)!}\frac{(m-2)!}2 
	\Big(2(m-1)\sC_{g_0}(r) + \sC_{g_0}^2(\dot{R_g}) \Big) + \cO(\rho^{N+1}) \\
	= -\frac{q}n  \lambda^{(2q)}
	\Big(n\sC_{g_0}(r) 
	+ \tfrac12\Delta_{g_0}(\sC_{g_0}(r)) +\delta_{g_0}\delta_{g_0} \cG_{g_0}^{(2)}(r)
	 \Big) + \cO(\rho^{N+1}).
\end{gathered}
\end{equation*}
\end{proof}

To compensate for the diffeomorphism invariance of these tensors, we will perturb them by adding an operator of the form
\begin{equation*}
	(c_1\delta_g^*+c_2g\delta_g)(gt^{-1}\delta_g\cG^{(2)}_g(t))
\end{equation*}
where $t$ is an auxiliary metric.

\begin{lemma}[{\cite[Lemma 2.3, Proposition 2.10]{Graham-Lee}}] \label{lem:LinPert}
For metrics $(g,t),$ a symmetric $2$-tensor $r,$ and constants $c_1,$ $c_2,$ the linearization of the map 
$(g,t) \mapsto (c_1\delta_g^*+c_2g\delta_g)(gt^{-1}\delta_g\cG^{(2)}_g(t))$ with respect to the first variable in the direction of $r$ is
\begin{equation*}
	D_1((c_1\delta_g^*+c_2g\delta_g)(gt^{-1}\delta_g\cG^{(2)}_g(t)))_{(g,t)}(r)
	= (c_1\delta_g^*+c_2g\delta_g)(-\delta_g\cG^{(2)}_g(r) + \sC(r)-\sD(r))
	+ \sB_{c_1,c_2}(r)
\end{equation*}
where, with covariant derivatives with respect to $g,$
\begin{equation*}
\begin{gathered}
	C_{ij}^k = \tfrac12(t^{-1})^{k\ell}(t_{i\ell,j}+t_{j\ell,i}-t_{ij,\ell}), \quad
	D^k = g^{ij}C_{ij}^k = -(t^{-1}\delta_g\cG^{(2)}_gt)^k, \\
\begin{multlined}
	(\sB_{c_1,c_2}(r))_{jk}
	=c_1 \tfrac12D^{t}(r_{kt,j}+r_{jt,k}-r_{kj,t}) \\
	-c_2 \Big( r^{ab}D_{a,b}g_{jk} 
	+\tfrac12D^{t}g^{ab}(r_{at,b}+r_{bt,a}-r_{ab,t})
	- g^{ab}D_{a,b}r_{jk} \Big), 
\end{multlined}\\
	(\sC(r))_j = g_{jk}C^k_{ab}r^{ab}, \quad
	(\sD(r))_j = D^kr_{jk}.
\end{gathered}
\end{equation*}
If $g$ and $t$ are asymptotically hyperbolic metrics such that $\rho^2g|_{\pa M}=\rho^2t|_{\pa M}$ and $r = \rho^N \bar r,$ with $\bar r\in \cC^2(\bar M, \cS^2(M)),$ then
\begin{equation*}
	D_1((c_1\delta_g^*+c_2g\delta_g)(gt^{-1}\delta_g\cG^{(2)}_g(t)))_{(g,t)}(r)	
	= (c_1\delta_g^*+c_2g\delta_g)(-\delta_g\cG^{(2)}_g(r)) + \cO(\rho^{N+1}),
\end{equation*}
and if moreover $g$ and $t$ are equal on $M$ 
then the $\cO(\rho^{N+1})$ term vanishes.
\end{lemma}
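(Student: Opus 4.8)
The plan is to treat the perturbation operator as a composition $\Psi(g,t) = L_g(V_{g,t})$, where $L_g = c_1\delta_g^* + c_2 g\delta_g$ is the first-order operator from one-forms to symmetric two-tensors and $V_{g,t} = gt^{-1}\delta_g\cG_g^{(2)}(t)$ is a one-form, and then to differentiate this composition in the first slot by the product rule. The first thing I would record is a more convenient description of $V_{g,t}$: since $\cG_g^{(2)}(t)_{ij} = t_{ij} - \tfrac12 (g^{k\ell}t_{k\ell})g_{ij}$, a two-line computation gives $\delta_g\cG_g^{(2)}(t) = \delta_g t + \tfrac12 d(g^{k\ell}t_{k\ell})$, and comparing with the coordinate expression for $C^k_{ij}$ shows $g^{ij}C^k_{ij} = -(t^{-1}\delta_g\cG_g^{(2)}(t))^k = D^k$; equivalently $C^k_{ij} = \Gamma(t)^k_{ij} - \Gamma(g)^k_{ij}$ is the difference of the two Levi-Civita connections. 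Lowering an index with $g$, this says precisely that $V_{g,t}$ is the one-form $-D^\flat$, $(D^\flat)_j = g_{jk}D^k$.

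Next I would carry out the linearization via $D_1\Psi_{(g,t)}(r) = (D_1 L_g(r))(V_{g,t}) + L_g(D_1 V_{g,t}(r))$. For the second term, I linearize $D^k = g^{ij}C^k_{ij}$ in $g$ with $t$ held fixed, using $D_1(g^{ij})(r) = -r^{ij}$, the standard variation of the connection $\dot\Gamma^k_{ij}(r) = \tfrac12 g^{k\ell}(r_{i\ell;j}+r_{j\ell;i}-r_{ij;\ell})$ (so that $D_1 C^k_{ij}(r) = -\dot\Gamma^k_{ij}(r)$, the only $g$-dependence of $C^k_{ij}$ being through $\nabla^g$; cf. \cite[Theorem 1.174]{Besse}), the lowering variation $D_1(g_{jk})(r) = r_{jk}$, and the contraction identity $g^{ij}\dot\Gamma^k_{ij}(r) = -g^{k\ell}(\delta_g\cG_g^{(2)}(r))_\ell$; collecting terms gives $D_1 V_{g,t}(r) = -\delta_g\cG_g^{(2)}(r) + \sC(r) - \sD(r)$ with $\sC,\sD$ exactly as in the statement. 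For the first term, I vary the Christoffel symbols inside $\delta_g^*$ and $\delta_g$ and the explicit metric factor in $g\delta_g$: $(D_1\delta_g^*(r)(\omega))_{ij} = -\dot\Gamma^k_{ij}(r)\omega_k$ and $(D_1(g\delta_g)(r)(\omega))_{ij} = r_{ij}(\delta_g\omega) + g_{ij}(r^{k\ell}\omega_{k;\ell} + g^{k\ell}\dot\Gamma^m_{k\ell}(r)\omega_m)$; evaluating at $\omega = V_{g,t} = -D^\flat$ and simplifying yields precisely $\sB_{c_1,c_2}(r)$. Adding the two contributions gives the first displayed identity.

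For the asymptotic statement I would first observe that $C$, hence $D$, is $\cO(\rho)$ as a $g_0$-tensor whenever $\bar g = \rho^2 g$ and $\bar t = \rho^2 t$ induce the same metric on $\pa M$: by the conformal transformation law for the Levi-Civita connection the $\log\rho$ contributions in $\Gamma(g)$ and $\Gamma(t)$ coincide and cancel in the difference, leaving $C^k_{ij} = (\bar\Gamma(\bar t)^k_{ij} - \bar\Gamma(\bar g)^k_{ij}) + (\bar t_{ij}\bar t^{k\ell} - \bar g_{ij}\bar g^{k\ell})\pa_\ell\log\rho$, in which the second summand is the product of something vanishing on $\pa M$ with $\cO(\rho^{-1})$, so $C^k_{ij}$ has bounded coordinate components, i.e. $|C|_{g_0} = \cO(\rho)$ and likewise $|D|_{g_0} = \cO(\rho)$. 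Since $\sC(r)$, $\sD(r)$ and $\sB_{c_1,c_2}(r)$ are algebraic in $C$ (equivalently $D$) and linear in $r$ and its first $g_0$-covariant derivatives, and $|r|_{g_0},\,|\nabla^{g_0}r|_{g_0} = \cO(\rho^N)$, they are all $\cO(\rho^{N+1})$; as $\delta_g$, $\delta_g^*$ preserve the order of vanishing measured in the $g$-norm this is not affected by applying $L_g$, and one obtains $D_1\Psi_{(g,t)}(r) = (c_1\delta_g^* + c_2 g\delta_g)(-\delta_g\cG_g^{(2)}(r)) + \cO(\rho^{N+1})$. Finally, if $g \equiv t$ on $M$ then $C^k_{ij} \equiv 0$, so $D\equiv 0$ and $\sC(r), \sD(r), \sB_{c_1,c_2}(r)$ vanish identically.

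The genuinely laborious step is the bookkeeping in the product-rule computation: there are four separate $g$-dependencies to track — the connection in $\delta_g^*$, the connection in $\delta_g$, the explicit metric in $g\delta_g$, and the combination (contraction by $g^{-1}$ followed by lowering by $g$) turning $C$ into $-D^\flat$ — and one must be careful with sign and symmetrization conventions so that the many terms collapse exactly into $-\delta_g\cG_g^{(2)}(r) + \sC(r) - \sD(r)$ and $\sB_{c_1,c_2}(r)$. Once this is in place, and given the standard mapping properties of $\delta_g,\delta_g^*$ on asymptotically hyperbolic manifolds, the asymptotic claim is a routine power count; indeed this is essentially \cite[Lemma 2.3, Proposition 2.10]{Graham-Lee}, reproduced here for convenience.
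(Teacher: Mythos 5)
Your proposal is correct and follows essentially the same route as the paper: product rule on the composition, variation of the Christoffel symbols to get $D_1V_{g,t}(r)=-\delta_g\cG^{(2)}_g(r)+\sC(r)-\sD(r)$ and $\sB_{c_1,c_2}(r),$ and a power count in $C$ and $D$ for the asymptotics; the only difference is that the paper simply cites \cite[Lemma 2.3, Proposition 2.10]{Graham-Lee} for the $c_1$ piece and computes only the new $g\delta_g$ variation, whereas you re-derive everything. (Your inclusion of the factor $g_{jk}$ on the middle term of the $c_2$ part of $\sB_{c_1,c_2}(r)$ is the correct reading of the formula.)
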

\begin{proof}
For $c_1=1,$ $c_2=0,$ this is shown in \cite{Graham-Lee}.
So it suffices to compute
\begin{multline*}
	\pa_s|_{s=0}
	((g_s\delta_{g_s})(gt^{-1}\delta_g\cG^{(2)}_g(t)))
	= 
	-\pa_s|_{s=0}
	((g_s\delta_{g_s})gD) \\
	= - r^{ab}D_{a,b}g_{jk} 
	+\tfrac12D^{t}g^{ab}(r_{at,b}+r_{bt,a}-r_{ab,t})
	- g^{ab}D_{a,b}r_{jk}
\end{multline*}
and note that if $g$ and $t$ are asymptotically hyperbolic metrics with the same leading term at $\pa M$ and $r$ is as above, then $D^k = \cO(\rho^2),$ $D^k_{\phantom{k},s} = \cO(\rho),$ and hence the right hand side of this expression is $\cO(\rho^{N+1}).$ It  vanishes if  $g=t$ since then both $C$ and $D$ vanish.
\end{proof}

In view of Lemmas \ref{lem:LinCurv} and \ref{lem:LinPert}, we define
\begin{multline}\label{eq:DefPhi}
	\Phi_{(\alpha,\beta)}(g,t) \\
	= -
	\sum\frac{\lambda^{(2q)}}{n(n-1)}
	\Big( \alpha_q (n-2q+1) \delta_g^* - (\alpha_q(q-1)+\beta_q(n-1)q)g\delta_g \Big)
	(gt^{-1}\delta_g\cG^{(2)}_g(t)),
\end{multline}
and, as anticipated above,
\begin{equation*}
	Q_{(\alpha,\beta)}(g,t) 
	= \sum \alpha_q (\cR^{(2q)}_g - \lambda^{(2q)}g) + \beta_q (\scal^{(2q)}(g) - (n+1)\lambda^{(2q)})g -\Phi_{(\alpha,\beta)}(g,t).\end{equation*}
We have shown the following.

\begin{lemma}\label{lem:Linear}
Assume $g_0$ and $t$ are asymptotically hyperbolic metrics such that $\rho^2g_0|_{\pa M}=\rho^2t|_{\pa M}$ and $r = \rho^N \bar r,$ where $\bar r\in \cC^2(\bar M, \cS^2(M))$ has decomposition 
\begin{equation*}
	r= u g_0 + r_0 \Mwith \sC_{g_0}(r_0)=0.
\end{equation*}
The linearization of $g \mapsto Q_{(\alpha,\beta)}(g,t)$ at $(g_0,g_0)$ in the direction of $r$ is 
\begin{multline}\label{eq:LinPureL}
	-\sum\frac{\lambda^{(2q)}}{2n(n-1)} 
	\Big( q(n-1)(\alpha_q+(n+1)\beta_q) (\Delta_g+2n)(ug_0)
	+ (n-2q+1)\alpha_q(\Delta_g-2)(r_0) \Big) \\
	+ \cO(\rho^{N+1})
\end{multline}
If $g_0$ has constant sectional curvature then the $\cO(\rho^{N+1})$ term is identically zero.
\end{lemma}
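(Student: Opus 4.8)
The plan is to assemble the statement from Lemmas~\ref{lem:LinCurv} and~\ref{lem:LinPert} together with the specific coefficients fixed in \eqref{eq:DefPhi}. First I would linearize the curvature part: summing \eqref{eq:LinL} against $\alpha_q$ and \eqref{eq:Linell} against $\beta_q$ expresses $\cD\big(\sum\alpha_q(\cR^{(2q)}_g-\lambda^{(2q)}g)+\beta_q(\scal^{(2q)}(g)-(n+1)\lambda^{(2q)})g\big)_{g_0}(r)$, up to $\cO(\rho^{N+1})$ (vanishing if $g_0$ has constant sectional curvature), as a combination of $\sC_{g_0}(r)$, $\Delta_{g_0}(\sC_{g_0}(r))$, $\Delta_{g_0}r$, $r$ (with or without a factor $g_0$) together with the two ``gauge'' second-order terms $\delta_{g_0}^*\delta_{g_0}\cG_{g_0}^{(2)}(r)$ and $g_0\,\delta_{g_0}\delta_{g_0}\cG_{g_0}^{(2)}(r)$; reading off coefficients, the former appears with $\sum_q\frac{\lambda^{(2q)}}{n(n-1)}\alpha_q(n-2q+1)$ and the latter with $-\sum_q\frac{\lambda^{(2q)}}{n(n-1)}(\alpha_q(q-1)+\beta_q(n-1)q)$.

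Next I would linearize $\Phi_{(\alpha,\beta)}$. Since in \eqref{eq:DefPhi} the operator $gt^{-1}\delta_g\cG_g^{(2)}(t)$ and the first-order operators $\delta_g^*$, $g\delta_g$ do not depend on $q$, we may write $\Phi_{(\alpha,\beta)}(g,t)=(C_1\delta_g^*+C_2 g\delta_g)(gt^{-1}\delta_g\cG_g^{(2)}(t))$ with $C_1=-\sum_q\frac{\lambda^{(2q)}}{n(n-1)}\alpha_q(n-2q+1)$ and $C_2=\sum_q\frac{\lambda^{(2q)}}{n(n-1)}(\alpha_q(q-1)+\beta_q(n-1)q)$, and apply Lemma~\ref{lem:LinPert} with these constants at $(g_0,g_0)$. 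Because $g=t=g_0$ on $M$, the $\cO(\rho^{N+1})$ error there vanishes identically and $D_1\Phi_{(\alpha,\beta)}(g_0,g_0)(r)=(C_1\delta_{g_0}^*+C_2 g_0\delta_{g_0})(-\delta_{g_0}\cG_{g_0}^{(2)}(r))$. Hence in $\cD Q_{(\alpha,\beta)}(g,t)_{(g_0,g_0)}(r)=\cD(\text{curvature part})_{g_0}(r)-D_1\Phi_{(\alpha,\beta)}(g_0,g_0)(r)$ the extra contribution $+(C_1\delta_{g_0}^*+C_2 g_0\delta_{g_0})(\delta_{g_0}\cG_{g_0}^{(2)}(r))$ cancels the two gauge terms of the previous step exactly --- this is precisely why \eqref{eq:DefPhi} was set up this way.

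It then remains to insert the decomposition $r=ug_0+r_0$, $\sC_{g_0}(r_0)=0$, into what survives (a combination of $\sC_{g_0}(r)$, $\Delta_{g_0}(\sC_{g_0}(r))$, $\Delta_{g_0}r$, $r$), using $\sC_{g_0}(g_0)=\dim M=n+1$ and the fact that $g_0$ is parallel and Einstein, so that $\Delta_{g_0}$ commutes with tensoring by $g_0$ and preserves the pointwise pure-trace/trace-free splitting. The trace-free part collapses to $-\sum_q\frac{\lambda^{(2q)}}{2n(n-1)}(n-2q+1)\alpha_q(\Delta_{g_0}-2)(r_0)$, and in the pure-trace part the scalar coefficients simplify through identities such as $-(q-1)(n+1)-(n-2q+1)=-q(n-1)$ and $(2q-nq-1)(n+1)+(n-2q+1)=-qn(n-1)$, yielding the coefficient $-\sum_q\frac{\lambda^{(2q)}}{2n(n-1)}q(n-1)(\alpha_q+(n+1)\beta_q)$ in front of $(\Delta_{g_0}+2n)(ug_0)$; together these give \eqref{eq:LinPureL}. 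The main obstacle is bookkeeping the many constants and checking that the two gauge-term cancellations really are exact; the stated $\cO(\rho^{N+1})$ remainder, and its vanishing for constant sectional curvature, is inherited term by term from Lemmas~\ref{lem:LinCurv} and~\ref{lem:LinPert}.
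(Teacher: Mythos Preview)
Your proposal is correct and follows exactly the approach the paper intends: the lemma is stated in the paper immediately after the phrase ``We have shown the following,'' meaning it is a direct combination of Lemmas~\ref{lem:LinCurv} and~\ref{lem:LinPert} with the choice of coefficients in~\eqref{eq:DefPhi}, and you have spelled out precisely that combination, including the cancellation of the gauge terms and the algebraic simplifications in the pure-trace and trace-free parts. One harmless slip: you do not need $g_0$ to be Einstein for $\Delta_{g_0}$ to preserve the pure-trace/trace-free splitting --- parallelism of $g_0$ for its own Levi-Civita connection suffices, so that $\sC_{g_0}$ commutes with the rough Laplacian.
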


In particular if $\beta_q = -\tfrac1{2q}\alpha_q$ the linearization is
\begin{equation}\label{eq:LinMixedL}
	\frac{\bA_1(\alpha, \kappa)}4
	\Big( -(n-1)(\Delta_{g_0}+2n)(u{g_0})+
	2(\Delta_{g_0}-2)(r_0)
	\Big).
\end{equation}

Recall the notation:
If $\bar g_i$ are metrics, all assumed to be of class $\cC^k$ on $M,$ then 
\begin{equation*}
	\sE^k(\bar g_1, \ldots, \bar g_N)
\end{equation*}
will denote any tensor whose components in any coordinate system smooth up to $\pa M$ are polynomials, with coefficients in $\CI(\bar M)$ in the components of the $\bar g_i,$ $\bar g_i^{-1},$ and their partial derivatives, such that in each term the total number of derivatives of the $\bar g_i$ that appear is at most $k.$

If $g$ is conformally compact, then
\begin{equation*}
	R_{jk} = -\rho^2(n\bar g^{it}\rho_i\rho_t)\bar g_{jk} + \rho^{-1}\sE^1(\bar g) + \sE^2(\bar g)
\end{equation*}
and more generally, from \eqref{eq:R2q},
\begin{equation*}
	\sC_g^{2q-1}(R^q) = \rho^{-2}\sum_{j=1}^{2q} \rho^j\sE^j(\bar g), \Mand
	\sC_g^{2q}(R^q) = \sum_{j=1}^{2q} \rho^j\sE^j(\bar g)
\end{equation*}

\begin{lemma}
For $g$ and $t$ conformally compact metrics,
\begin{multline}\label{eq:LeadTerm}
	(c_1\delta_g^*+c_2g\delta_g)(gt^{-1}\delta_g\cG^{(2)}_g(t)) \\
	= \rho^{-2}(\tfrac12 c_1(B_k\rho_j + B_j \rho_k) + c_2 g^{st}B_s\rho_t g_{jk}) + \rho^{-1}\sE^1(\bar g, \bar t) + \sE^2(\bar g, \bar t)
\end{multline}
where $B = [ \sC_{\bar g}(\bar t) \bar g\bar t^{-1} - (n+1) ] \; d\rho.$
In particular, 
$(c_1\delta_g^*+c_2g\delta_g)(gt^{-1}\delta_g\cG^{(2)}_g(t)) = \cO(\rho^{-1})$
if $\bar g\rest{\pa M}=\bar t|_{\pa M}.$

\end{lemma}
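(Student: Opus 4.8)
The plan is to reduce the whole statement to a computation with the conformal rescaling by $\rho^{2}$ and the corresponding change of Levi-Civita connection. Write $\bar g=\rho^{2}g$ and $\bar t=\rho^{2}t$, of class $\cC^{k}$ up to $\pa M$, and recall $\dim M=n+1$. Two elementary facts are needed. First, since $g^{k\ell}t_{k\ell}=\bar g^{k\ell}\bar t_{k\ell}=\sC_{\bar g}(\bar t)$, the $2$-gravitation operator is conformally natural in this weight: $\cG^{(2)}_{g}(t)=\rho^{-2}\,\cG^{(2)}_{\bar g}(\bar t)=\rho^{-2}P$, where $P:=\bar t-\tfrac12\sC_{\bar g}(\bar t)\,\bar g$ is a $\cC^{k}$ section of $\cS^{2}(M)$. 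Second, the operator $gt^{-1}$, when acting on one-forms, equals $\bar g\bar t^{-1}$, because the factors $\rho^{\pm2}$ cancel.

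Next I would compute $\delta_{g}(\rho^{-2}P)$ from the conformal change of Christoffel symbols, $\Gamma^{k}_{ij}(g)=\bar\Gamma^{k}_{ij}-\rho^{-1}\big(\delta^{k}_{i}\rho_{j}+\delta^{k}_{j}\rho_{i}-\bar g_{ij}\bar g^{k\ell}\rho_{\ell}\big)$. Carrying the factor $\rho^{-2}$ through $(\delta_{g}S)_{i}=-g^{jk}\nabla^{g}_{k}S_{ij}$, every term involving a $\bar g$-covariant derivative of $P$ collects into $\sE^{1}(\bar g,\bar t)$, while the genuinely singular part has weight $\rho^{-1}$ and is algebraic in $\bar g,\bar t$ and $d\rho$; substituting $P=\bar t-\tfrac12\sC_{\bar g}(\bar t)\bar g$ turns it into $\rho^{-1}\big((n+1)\,\bar t(\nabla_{\bar g}\rho)-\sC_{\bar g}(\bar t)\,d\rho\big)$, where $\bar t(\nabla_{\bar g}\rho)$ is the one-form obtained by contracting $\bar t$ with the $\bar g$-gradient of $\rho$. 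Applying $gt^{-1}=\bar g\bar t^{-1}$ sends $\bar t(\nabla_{\bar g}\rho)\mapsto d\rho$ and $d\rho\mapsto\bar g\bar t^{-1}d\rho$, so
\[
	gt^{-1}\delta_{g}\cG^{(2)}_{g}(t)=-\rho^{-1}B+\sE^{1}(\bar g,\bar t),\qquad B=\big[\sC_{\bar g}(\bar t)\,\bar g\bar t^{-1}-(n+1)\big]d\rho.
\]

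Finally I would feed this one-form $\omega$ into $c_{1}\delta_{g}^{*}+c_{2}g\delta_{g}$. By the same Christoffel identity, $\delta_{g}^{*}(\rho^{-1}B)$ and $g\delta_{g}(\rho^{-1}B)$ each have a most singular part of weight $\rho^{-2}$ that is linear-algebraic in $B$, $d\rho$ and $\bar g$, whereas the image of the $\sE^{1}(\bar g,\bar t)$ remainder of $\omega$, together with the subleading pieces of $\delta_{g}^{*}(\rho^{-1}B)$ and $g\delta_{g}(\rho^{-1}B)$ (those carrying a $\bar g$-Christoffel or a derivative of $B$, hence of $\bar g$ or $\bar t$), all lie in $\rho^{-1}\sE^{1}(\bar g,\bar t)+\sE^{2}(\bar g,\bar t)$. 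Collecting the $\rho^{-2}$ pieces yields the stated formula. For the final assertion, if $\bar g|_{\pa M}=\bar t|_{\pa M}$ then $\bar g\bar t^{-1}|_{\pa M}=\Id$ and $\sC_{\bar g}(\bar t)|_{\pa M}=n+1$, so $B$ vanishes on $\pa M$; writing $B=\rho\widetilde B$ with $\widetilde B$ of class $\cC^{k-1}$, the $\rho^{-2}$ term becomes $\rho^{-1}$ times an element of $\sE^{1}(\bar g,\bar t)$, and the whole expression is $\cO(\rho^{-1})$.

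The only obstacle is bookkeeping rather than any genuine difficulty: two successive first-order operators applied to tensors already weighted by negative powers of $\rho$ generate many terms, and one must isolate the few genuinely most singular ones, produced by the $\rho^{-1}$ part of $\Gamma(g)-\Gamma(\bar g)$ contracted against $d\rho$, from everything that the conformal connection change degrades into the remainder classes $\rho^{-1}\sE^{1}(\bar g,\bar t)$ and $\sE^{2}(\bar g,\bar t)$. This is the same mechanism that underlies the expansions $\sC_{g}^{2q-1}(R^{q})=\rho^{-2}\sum_{j=1}^{2q}\rho^{j}\sE^{j}(\bar g)$ recorded just before the statement, so no new ingredient beyond careful index manipulation with $\Gamma(g)-\Gamma(\bar g)$ is required.
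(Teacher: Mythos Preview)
Your proposal is correct and follows essentially the same route as the paper. The paper simply quotes the identity $(gt^{-1}\delta_g\cG^{(2)}_g t)_k=-\rho^{-1}B_k+\sE^1(\bar g,\bar t)$ from Graham--Lee \cite[Proof of Proposition 2.5]{Graham-Lee}, whereas you rederive it from the conformal change of Christoffel symbols; after that, both arguments apply $(c_1\delta_g^*+c_2g\delta_g)$ and read off the $\rho^{-2}$ leading term and the remainder classes in the same way, and both conclude by noting $B=\cO(\rho)$ when $\bar g|_{\pa M}=\bar t|_{\pa M}$.
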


\begin{proof}
Graham-Lee \cite[Proof of Proposition 2.5]{Graham-Lee} compute that 
\begin{equation*}
	(gt^{-1}\delta_g\cG^{(2)}_gt)_k = -\rho^{-1}B_k + \sE^1(\bar g, \bar t).
\end{equation*}
Applying $(c_1\delta_g^*+c_2g\delta_g)$ to this expression yields \eqref{eq:LeadTerm}.
If $\bar g\rest{\pa M}=\bar t|_{\pa M}$ then $B=\cO(\rho).$
\end{proof}

Recall, from \cite[\S 3]{Graham-Lee} the following notation for spaces of functions.
Consider 
$\hat \Omega$ a bounded open subset of $\bbR^{n+1}$ with smooth boundary and 
$\Omega$ an open subset of $M.$ 
Let $d_x$ denote the Euclidean distance from $x$ to $\pa \hat \Omega$ and denote for $s \in \bbR,$ $k \in \bbN,$ $\gamma\in(0,1),$
\begin{equation*}
\begin{gathered}
	\norm{u}^{(s)}_{k,0;\Omega}
	= \sum_{j=0}^k\sum_{|\xi|=j} \norm{d^{-s+j}D^{\xi}u}_{L^{\infty}(\Omega)}\\
	\norm{u}^{(s)}_{k,\gamma;\Omega}
	= \norm{u}^{(s)}_{k,0;\Omega}
	+ \sum_{|\xi|=k}
	\Big[ \min(d_x^{-s+k+\gamma}, d_y^{-s+k+\gamma})\frac{|\pa^{\xi}u(x)-\pa^{\xi}u(y)|}{|x-y|^{\alpha}}\Big].
\end{gathered}
\end{equation*}
We denote by $\Lambda^s_{k,0}(\Omega),$ $\Lambda^s_{k,\gamma}(\Omega),$  the Banach spaces of functions in $\cC^k(\Omega),$ with finite $\norm{\cdot}^{(s)}_{k,0;\Omega}$ or finite $\norm{\cdot}^{(s)}_{k,\gamma;\Omega},$ respectively.
These give rise to Banach spaces of functions on $M,$ denoted $\Lambda^s_{k,0}(M),$ $\Lambda^s_{k,\gamma}(M),$ see \cite[Proposition 3.3]{Graham-Lee}.\\

We define an extension operator from  boundary metrics to interior metrics as follows.
Let $h$ be an asymptotically hyperbolic metric on $M$ with $\bar h = \rho^2h \in \CI(\bar M, \cS^2(M)),$ choose a non-negative cut-off function $\phi\in\CI(\bar M)$ supported in the set $\cU$ on which the flow along $\bar h$-geodesics normal to $\pa M$ is a local diffeomorphism, and which is identically equal to one in a neighborhood of $\pa M.$
Define
\begin{equation}\label{eq:DefT}
	E_h(\hat g) = \phi \bar g + (1-\phi)\bar h, \quad T(\hat g) = T_{\rho,h}(\hat g) = \rho^{-2}E_h(\hat g)
\end{equation}
where $\bar g$ is the extension of $\hat g$ from $\pa M$ obtained by parallel translation and requiring $\bar g(\nu, \cdot) =d\rho,$ with $\nu$ the inward pointing normal to $\pa M$ corresponding to $\bar h.$
Thus $E_h(\hat g)$ is a metric on $M$ extending $\hat g$ such that $\rho^{-2}E_h(\hat g)$ is an asymptotically hyperbolic metric on $M.$\\

Fix now $M = \bbB^{n+1},$ $\rho(\zeta) = \tfrac12(1-|\zeta|^2),$ and $h = {\df h},$ the hyperbolic metric.

Given $\hat g \in \cC^{k,\gamma}(\pa M, \cS^2(\pa M)),$ a metric on $\pa M,$ we can employ the argument in \cite[pg 203-205]{Graham-Lee} virtually unchanged to construct approximate solutions to $Q_{\alpha,\beta}(g,T(\hat g))=0.$ We start with $g_1 = \rho^{-2}E_{\df h}(\hat g),$ for which it is easy to see from \eqref{eq:LeadTerm} that $Q(g_1, \rho^{-2}E_{\df h}(\hat g))= \cO(\rho^{-1})$ and then use the linearization in Lemma \ref{lem:Linear} and the indicial root computation in \cite[\S 2]{Graham-Lee} to construct successive approximations resulting in:
\begin{proposition}[{\cite[Theorem 2.11]{Graham-Lee}}] \label{lem:GL211}
There is a smooth operator 
\begin{equation*}
	S: \cC^{k,\gamma}(\pa M, \cS^2(\pa M)) \lra \Lambda^{-2}_{k-\mu,\gamma}(M,\cS^2(M)),
\end{equation*}
where $\mu= \min(k-2,n-1),$
such that $Q(S(\hat g), T(\hat g)) = \cO(\rho^{m-1})$ and $S({\df h}) = {\df h}.$
The map $\hat g \mapsto Q(S(\hat g), T(\hat g))$ is smooth from $\cC^{k,\gamma}(\pa M, \cS^2(\pa M))$ into $\Lambda^{\mu-1}_{k-\mu-2,\gamma}(M,\cS^2(M)).$
\end{proposition}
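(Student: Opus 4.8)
The plan is to run the successive-approximation scheme of \cite[pp.~203--205]{Graham-Lee} essentially verbatim, the one genuinely new ingredient being Lemma \ref{lem:Linear}: up to nonzero scalar factors, the indicial behaviour of $Q_{(\alpha,\beta)}$ at an asymptotically hyperbolic metric is the same as that of the gauge-fixed Einstein equation. First I would set $g_1 = \rho^{-2}E_{\df h}(\hat g) = T(\hat g)$ as the initial approximation. Since $\rho^2 g_1|_{\pa M} = \rho^2 T(\hat g)|_{\pa M}$, the perturbation term satisfies $\Phi_{(\alpha,\beta)}(g_1, T(\hat g)) = \cO(\rho^{-1})$ by \eqref{eq:LeadTerm}, while the curvature terms $\sum \alpha_q(\cR^{(2q)}_{g_1} - \lambda^{(2q)}g_1) + \beta_q(\scal^{(2q)}(g_1) - (n+1)\lambda^{(2q)})g_1$ are $\cO(\rho^{-1})$ by the asymptotically-hyperbolic analysis of \S\ref{sec:FG} (here $|d\rho/\rho|^2_{g_1}|_{\pa M} = 1 \in \mathrm{LimSec}(\alpha,\beta)$, so the a priori $\rho^{-2}$ term cancels) together with the expansion $\sC_{g_1}^{2q-1}(R^q) = \rho^{-2}\sum_{j=1}^{2q}\rho^j\sE^j(\bar g_1)$; thus $Q_{(\alpha,\beta)}(g_1, T(\hat g)) = \cO(\rho^{-1})$.

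The inductive improvement is the heart of the matter. Suppose an approximation $g_j$ with $Q_{(\alpha,\beta)}(g_j, T(\hat g)) = \cO(\rho^{j-1})$ has been constructed. I would isolate the coefficient of $\rho^{j-1}$, split it into its $g_j$-trace and $g_j$-trace-free parts, and look for a correction $g_{j+1} = g_j + \rho^j \bar r_j$ removing that coefficient. By Lemma \ref{lem:Linear}, the linearization of $g \mapsto Q_{(\alpha,\beta)}(g, T(\hat g))$ at $(g_j, T(\hat g))$ in the direction $\rho^j(u g_j + (r_j)_0)$, with $(r_j)_0$ trace-free, is, to leading order, the nonzero constant $-\tfrac1{2n}\sum \lambda^{(2q)} q(\alpha_q+(n+1)\beta_q)$ times $(\Delta_{g_j}+2n)(u g_j)$ on the pure-trace part plus the nonzero constant $\tfrac12\bA_1(\alpha,\kappa)$ times $(\Delta_{g_j}-2)((r_j)_0)$ on the trace-free part. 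These are exactly the indicial operators analysed in \cite[\S2]{Graham-Lee}; since $\sum \lambda^{(2q)} q(\alpha_q+(n+1)\beta_q) \neq 0$ and $\bA_1(\alpha,\kappa) \neq 0$ by hypothesis, their indicial roots coincide with those in the Einstein setting, so the correction equations for $u$ and $(r_j)_0$ are solvable repeatedly until the error has been reduced to $\cO(\rho^{m-1})$; the exceptional indicial root at $s = m-1 = n$---responsible for the $\rho^n\log\rho$ coefficient in the metric when $n$ is even and for the freedom in $h_n$---is what prevents going further.

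Iterating the correction, with the number of steps and the loss of derivatives per step exactly as in \cite{Graham-Lee}---this is what the cutoff $\mu = \min(k-2, n-1)$ records---produces a metric $S(\hat g) \in \Lambda^{-2}_{k-\mu,\gamma}(M, \cS^2(M))$ with $Q_{(\alpha,\beta)}(S(\hat g), T(\hat g)) = \cO(\rho^{m-1})$, this error moreover lying in $\Lambda^{\mu-1}_{k-\mu-2,\gamma}(M, \cS^2(M))$. Smoothness of $\hat g \mapsto S(\hat g)$ holds because each correction step is the composition of the map $\hat g \mapsto Q_{(\alpha,\beta)}(g_j, T(\hat g))$, which is polynomial---hence smooth between the relevant Banach spaces---in the derivatives of $\bar g_j$ and $\overline{T(\hat g)}$, followed by the extraction of a coefficient in the $\rho$-expansion at $\pa M$ and by the bounded solution operator of an indicial ordinary differential equation; a finite composition of smooth maps is smooth, and the same reasoning gives smoothness of $\hat g \mapsto Q_{(\alpha,\beta)}(S(\hat g), T(\hat g))$ into $\Lambda^{\mu-1}_{k-\mu-2,\gamma}(M, \cS^2(M))$. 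Finally, when $\hat g = \hat{\df h}$ one has $E_{\df h}(\hat{\df h}) = \bar{\df h}$, so $g_1 = {\df h}$, and $Q_{(\alpha,\beta)}({\df h}, {\df h}) = 0$ since the hyperbolic metric solves the curvature equation exactly and $\Phi_{(\alpha,\beta)}({\df h}, {\df h}) = 0$ because $g = t$; no corrections are needed, so $S({\df h}) = {\df h}$.

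I expect the one real obstacle to be the verification that the indicial-root structure of $Q_{(\alpha,\beta)}$ is unchanged from the Einstein case; the iteration mechanism, the two-derivative loss per step, and the weighted H\"older estimates of \cite[\S3]{Graham-Lee} are then imported wholesale. But that verification is already in hand: Lemma \ref{lem:Linear} reduces it to the nonvanishing of the two scalar prefactors $\sum \lambda^{(2q)} q(\alpha_q+(n+1)\beta_q)$ and $\bA_1(\alpha,\kappa)$, which is precisely the standing hypothesis---and it is the potential vanishing of $\bA_1(\alpha,\kappa)$ that would degenerate the trace-free indicial operator and wreck the iteration, which is why that case is set aside.
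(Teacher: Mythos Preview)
Your proposal is correct and follows essentially the same approach as the paper: the paper's argument is simply the one sentence preceding the proposition, namely to start with $g_1 = T(\hat g)$, note that $Q(g_1, T(\hat g)) = \cO(\rho^{-1})$ via \eqref{eq:LeadTerm}, and then run the Graham--Lee iteration of \cite[pp.~203--205]{Graham-Lee} using Lemma~\ref{lem:Linear} to identify the indicial operators with those of the Einstein case. You have correctly elaborated this sketch, including the identification of the scalar prefactors, the two stopping mechanisms (regularity loss versus the indicial root at $n$) encoded in $\mu = \min(k-2, n-1)$, and the verification that $S(\hat{\df h}) = \df h$.
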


Let $\hat {\df h}$ denote the round metric on $\bbS^{n}.$

\begin{theorem}[{\cite[Theorem 4.1]{Graham-Lee}}] \label{thm:GL41}
Let $M= \bbB^{n+1},$ $n\geq 4,$ $k\geq 2,$  $\theta\in(0,1),$ and
let $(\alpha, \beta)$ be such that
\begin{equation*}
	\sum q\lambda^{(2q)} (\alpha_q+(n+1)\beta_q) \neq 0
	\Mand
	\bA_1(\alpha, \kappa)\neq 0.
\end{equation*}
There exists $\eps>0$ such that, if $\hat g$ is a smooth metric on $\pa M$ with
\begin{equation*}
	\norm{\hat g-\hat {\df h}}_{k,\theta} <\eps,
\end{equation*}
there is a metric $g$ on $M$ with uniformly negative Ricci curvature such that 
\begin{equation*}
\begin{gathered}
	\rho^2 g \in \cC^{n-1,\gamma}(\bar M, \cS^2(M)) \Mforany 0<\gamma <\min(1-\tfrac12(n-\sqrt{n^2-8}), k+\theta-2n+2), \\
	\rho^2g\rest{T\pa M} = \hat g, 
	\quad \Mand \quad
	Q_{\alpha, \beta}(g, T(\hat g)) =0.
\end{gathered}
\end{equation*}
\end{theorem}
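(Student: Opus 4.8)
The plan is to run the Graham--Lee argument \cite{Graham-Lee} essentially verbatim, feeding in the linearization of Lemma \ref{lem:Linear} in place of the gauge-fixed linearized Einstein operator. Fix $M = \bbB^{n+1}$, $\rho(\zeta) = \tfrac12(1-|\zeta|^2)$, and $h = \df h$ the hyperbolic metric; for a boundary metric $\hat g$ close to the round metric $\hat{\df h}$, write $t = T(\hat g)$ for the asymptotically hyperbolic extension \eqref{eq:DefT}. By Proposition \ref{lem:GL211} there is a smooth operator $S$ with $S(\df h) = \df h$ whose output $g_0 = S(\hat g)$ satisfies $Q_{(\alpha,\beta)}(g_0, T(\hat g)) = \cO(\rho^{m-1})$, $m = n+1$, and the assignment $\hat g \mapsto Q_{(\alpha,\beta)}(S(\hat g), T(\hat g))$ is smooth into a weighted Hölder space $\Lambda^{\mu-1}_{k-\mu-2,\gamma}(M, \cS^2(M))$. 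I would look for the exact solution in the form $g = g_0 + r$ and solve $Q_{(\alpha,\beta)}(g_0 + r, T(\hat g)) = 0$ for a small symmetric $2$-tensor $r$ by the implicit function theorem on the scale $\Lambda^s_{k,\gamma}(M,\cS^2(M))$.

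\textbf{Ellipticity and invertibility of the linearization.} The operator $Q_{(\alpha,\beta)}$ itself is not elliptic, since the second-order part of the higher Lovelock tensors is degenerate; the point is that this degenerate part is subleading near $\df h$. When $\hat g$ is $\cC^{k,\theta}$-close to $\hat{\df h}$, the metrics $g_0$ and $T(\hat g)$ are $\cC^2$-close to $\df h$, so Lemma \ref{lem:Linear} (with $\kappa = 1$) gives that $D_1 Q_{(\alpha,\beta)}$ at $(g_0, T(\hat g))$ equals, modulo $\cO(\rho^{N+1})$, the operator $c_{\mathrm{tr}}(\Delta_{g_0}+2n)$ on pure-trace directions $u g_0$ and $c_{\mathrm{tf}}(\Delta_{g_0}-2)$ on trace-free directions $r_0$, with $c_{\mathrm{tr}} = -\tfrac1{2n}\sum q\lambda^{(2q)}(\alpha_q+(n+1)\beta_q)$ and $c_{\mathrm{tf}}$ a nonzero multiple of $\bA_1(\alpha,1)$; see \eqref{eq:LinPureL}, and \eqref{eq:LinMixedL} in the Lovelock case. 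The two displayed hypotheses are precisely $c_{\mathrm{tr}}\neq 0$ and $c_{\mathrm{tf}}\neq 0$, so the leading part of $D_1 Q_{(\alpha,\beta)}$ is, on each summand, a nonzero constant times a shifted (scalar, resp.\ tensor) Laplacian---hence elliptic, with the same indicial operators as in \cite[\S2]{Graham-Lee}. The weighted Hölder mapping theory of \cite[\S3]{Graham-Lee} then shows that $D_1 Q_{(\alpha,\beta)}$ at $(\df h,\df h)$ is an isomorphism between the appropriate spaces $\Lambda^s_{k,\gamma}(M,\cS^2(M))$ for $s$ in the admissible range lying between consecutive indicial roots.

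\textbf{Perturbation to an exact solution.} The map $\Psi(\hat g, r) = Q_{(\alpha,\beta)}(S(\hat g)+r, T(\hat g))$ is smooth near $(\hat{\df h}, 0)$, and $\Psi(\hat{\df h},0) = 0$: indeed $S(\hat{\df h}) = \df h = T(\hat{\df h})$, the hyperbolic metric satisfies $\cR^{(2q)}_{\df h} = \lambda^{(2q)}\df h$ and $\scal^{(2q)}(\df h) = (n+1)\lambda^{(2q)}$, and $\Phi_{(\alpha,\beta)}(\df h,\df h) = 0$ because $\delta_{\df h}\cG^{(2)}_{\df h}(\df h) = 0$ (the tensor $\cG^{(2)}_{\df h}(\df h)$ is a constant multiple of $\df h$, which is divergence free). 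Since $D_r\Psi(\hat{\df h},0)$ is the isomorphism above, the implicit function theorem supplies an $\eps>0$ so that for $\|\hat g - \hat{\df h}\|_{k,\theta} < \eps$ there is a unique small $r(\hat g) \in \Lambda^s_{k,\gamma}(M,\cS^2(M))$ with $\Psi(\hat g, r(\hat g)) = 0$; put $g = S(\hat g) + r(\hat g)$. Because $r$ decays at $\pa M$ and $S(\hat g)$ extends the conformal class of $\hat g$, we get $\rho^2 g|_{T\pa M} = \hat g$; the stated boundary regularity $\rho^2 g \in \cC^{n-1,\gamma}(\bar M, \cS^2(M))$ follows from the mapping properties of $S$ in Proposition \ref{lem:GL211} together with the weight of $r$, bootstrapped as in \cite[\S4]{Graham-Lee}; and $g$ has uniformly negative Ricci curvature since it is $\cC^2$-close to $\df h$, whose Ricci tensor is $-n\,\df h$. (In the special cases $(\alpha,\beta) = (e_q,0)$ or $\beta_q = -\tfrac1{2q}\alpha_q$ one checks, as in Lemma \ref{lem:GL22}, that the gauge term vanishes as well, $\Phi_{(\alpha,\beta)}(g, T(\hat g)) = 0$, so $g$ solves the ungauged equations of Theorem \ref{thm:GLA}; for Theorem \ref{thm:GL41} as stated only $Q_{(\alpha,\beta)}(g,T(\hat g)) = 0$ is needed.)

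\textbf{Main obstacle.} Almost all of the analytic input---the indicial-root analysis and the isomorphism statement for the linearization on the weighted Hölder scale, the construction of the approximate solutions in Proposition \ref{lem:GL211}, and the final fixed-point step---is taken over from \cite{Graham-Lee} without change. The genuinely new points, and the only places care is needed, are: (i) showing that the two leading coefficients $c_{\mathrm{tr}}$ and $c_{\mathrm{tf}}$ are nonzero precisely under the displayed hypotheses (a short identity relating $\sum \lambda^{(2q)}(n-2q+1)\alpha_q$ to $\bA_1(\alpha,1)$, and $\sum q\lambda^{(2q)}(\alpha_q+(n+1)\beta_q)$ to the pure-trace coefficient); and (ii) confirming that the non-elliptic second-order part of the higher Lovelock tensors lands entirely in the $\cO(\rho^{N+1})$ remainder of Lemma \ref{lem:Linear}, so that the linearized operator is genuinely elliptic near $\df h$ and the Graham--Lee machinery applies unchanged. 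Both were in effect already settled in the proof of Lemma \ref{lem:Linear}, so once they are invoked the argument reduces to citing \cite{Graham-Lee}.
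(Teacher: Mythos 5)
Your proposal is correct and follows essentially the same route as the paper: approximate solutions from Proposition \ref{lem:GL211}, identification of the linearization at $(\df h,\df h)$ as nonzero multiples of $\Delta+2n$ and $\Delta-2$ via Lemma \ref{lem:Linear} (with the two displayed hypotheses exactly guaranteeing the two coefficients are nonzero), the Graham--Lee weighted H\"older isomorphism, and an inverse/implicit function theorem step, followed by the same regularity and negative-Ricci conclusions. The only cosmetic difference is that the paper applies the inverse function theorem to the augmented map $\sQ(\hat g,r)=(\hat g, Q_{(\alpha,\beta)}(S(\hat g)+r,T(\hat g)))$ rather than the implicit function theorem to $\Psi(\hat g,r)$, which is equivalent.
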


\begin{proof}
We summarize the proof of \cite[Theorem 4.1]{Graham-Lee}.

Set $\mu = \min(k-2,n-1),$ $\gamma_n = 1 -\tfrac12(n-\sqrt{n^2-8}),$ and let $s = \mu+1$ if $\mu+1<n$ and otherwise $s \in (n-1, n-1+\gamma_n).$
Let $L=D_1(Q_{\alpha,\beta}(g,t))_{({\df h}, {\df h})}(r),$ so that from Lemma \ref{lem:Linear}, $L$ is a non-zero multiple of $\Delta_{\df h}+2n$ on pure-trace tensors (relative to $\df h$) and a non-zero multiple of $\Delta_{\df h}-2$ on ${\df h}$-trace-free tensors.
For this choice of $s,$ \cite[Corollary 3.11]{Graham-Lee} implies that
\begin{equation*}
	L:\Lambda^{s-2}_{k-\mu,\theta}(M,\cS^2(M)) \lra \Lambda^{s-2}_{k-\mu-2,\theta}(M, \cS^2(M))
\end{equation*}
is an isomorphism and 
$Q_{(\alpha,\beta)}(S(\hat g), T(\hat g)) \in \Lambda^{s-2}_{k-\mu-2,\theta}(M, \cS^2(M)).$

Define
\begin{equation*}
\begin{gathered}
	\sB \subseteq \cC^{k,\theta}(\pa M, \cS^2(\pa M)) \times \Lambda^{s-2}_{k-\mu,\theta}(M, \cS^2(M)) \\
	\sB = \{ (\hat g, r): \hat g \text{ is pos.def. on }\pa M, \; S(\hat g) \text{ is defined, } \Mand S(\hat g)+r \text{ is pos.def. on }M\},
\end{gathered}
\end{equation*}
and a map
\begin{equation*}
\begin{gathered}
	\sQ: \sB \lra \cC^{k,\theta}(\pa M, \cS^2(\pa M)) \times \Lambda^{s-2}_{k-\mu-2,\theta}(M, \cS^2(M)) \\
	\sQ(\hat g, r) = (\hat g, Q_{(\alpha,\beta)}(S(\hat g)+r, T(\hat g))).
\end{gathered}
\end{equation*}
As in \cite[pg. 221]{Graham-Lee}, it follows from Lemma \ref{lem:GL211} and \cite[Proposition 3.3]{Graham-Lee} that $\sQ$ is smooth, satisfies $\sQ(\hat{\df h}, 0) = (\hat{\df h}, 0),$ and its linearization about $(\hat {\df h},0),$
\begin{multline*}
	D\sQ_{(\hat {\df h}, 0)}: \cC^{k,\theta}(\pa M, \cS^2(\pa M)) \times \Lambda^{s-2}_{k-\mu,\theta}(M, \cS^2(M)) \\
	\lra \cC^{k,\theta}(\pa M, \cS^2(\pa M)) \times \Lambda^{s-2}_{k-\mu-2,\theta}(M, \cS^2(M)),
\end{multline*}
is given by
\begin{multline*}
	D\sQ_{(\hat {\df h}, 0)}(\hat q, r) 
	= (\hat q, D_1Q{({\df h}, {\df h})}(DS_{\hat {\df h}}\hat q+r) + D_2Q_{({\df h}, {\df h})}(DT_{\hat {\df h}}\hat q)) 
	= (\hat q, Lr + K\hat q), \\
	\Mwhere K\hat q = D_1Q_{({\df h}, {\df h})}(DS_{\hat {\df h}}\hat q) + D_2Q_{({\df h}, {\df h})}(DT_{\hat {\df h}}\hat q)).
\end{multline*}
We have $K\hat q \in \Lambda^{s-2}_{k-\mu-2,\theta}(M, \cS^2(M))$ (since $Q_{(\alpha,\beta)}(S(\hat g), T(\hat g)) \in \Lambda^{s-2}_{k-\mu-2,\theta}(M, \cS^2(M))$ for every $\hat g$) and so the equation 
\begin{equation*}
	D\sQ_{(\hat {df h}, 0)}(\hat q, r) = (\hat w, v)
\end{equation*}
has a unique solution given by $\hat q = \hat w$ and $r = L^{-1}(v-K\hat w).$
The map $(\hat w, v) \mapsto (\hat q, r)$ is bounded as a map
\begin{equation*}
	\cC^{k,\theta}(\pa M, \cS^2(\pa M)) \times \Lambda^{s-2}_{k-\mu-2,\theta}(M, \cS^2(M)) \lra
	\cC^{k,\theta}(\pa M, \cS^2(\pa M)) \times \Lambda^{s-2}_{k-\mu,\theta}(M, \cS^2(M)),
\end{equation*}
so by the inverse function theorem $\sQ$ is locally invertible in some neighborhood of $(\hat{\df h},0).$ 
Thus if $g$ is sufficiently close to ${\df h}$ we can solve the equation $\sQ(\hat g_1, r) = (\hat g, 0),$ i.e., find $g = S(\hat g)+r$ such that
\begin{equation*}
	Q_{\alpha,\beta}(g, T(\hat g))=0.
\end{equation*}
Since $\hat g$ is smooth, $\rho^2S(\hat g)$ and $\rho^2T(\hat g)$ are smooth in $\bar M,$ so
\begin{equation*}
	g = S(\hat g)+r \in \rho^{-2}\CI(\bar M, \cS^2(M)) + \rho^{-2}\Lambda^s_{k-\mu,\theta}(M, \cS^2(M)) 
		\subseteq \cC^{2,\theta}(M, \cS^2(M)).
\end{equation*}
As $L$ is elliptic, $g \in \CI(M, \cS^2(M)).$

Since $s\geq 1,$ we always have $\rho^2 g \in \CI(\bar M, \cS^2(M))+ \Lambda^1_{2,\theta}(M, \cS^2(M)),$ so $\bar g$ is continuous on $\bar M.$ As in \cite{Graham-Lee}, using \cite[Proposition 3.3]{Graham-Lee} allows us to see that $\bar g$ is Lipschitz and in $\cC^{n-1,s-n+1}(\bar M, \cS^2(M)).$
Shrinking the neighborhood of $\hat {\df h}$ if necessary, $\bar g$ can be made arbitrarily close to $\bar h$ in the $\Lambda^0_{k-\mu,\theta}(M, \cS^2(M))$ norm and in particular, $g$ will have strictly negative Ricci curvature. 
\end{proof}

\begin{lemma}[{\cite[Lemma 2.2]{Graham-Lee}}] \label{lem:GL22}
Let $g$ be a conformally compact metric of class $\cC^3$ on $M$ such that, for some $K <0,$
\begin{equation*}
	\Ric(g)(V,V)\leq K|V|^2_g \Mforall V \in TM,
\end{equation*}
and such that in coordinates smooth up to the boundary $\pa_k \bar g_{ij}$ and $\rho\pa_k\pa_s \bar g_{ij}$ are bounded. 
Let $t$ be a conformally compact metric of class $\cC^3$ on $M,$ such that $\bar t \in \cC^2(\bar M,\cS^2(M)).$

a) If $\cR_g^{(2q)} - \lambda^{(2q)}g - \Phi_{(e_q, 0)}(g,t)=0$ then $\cR_g^{(2q)} = \lambda^{(2q)}g.$

b) If $\alpha$ is such that $\mathrm{LimSec}(\alpha)\neq \emptyset$ and $\beta_q = -\tfrac1{2q}\alpha_q,$ then
\begin{equation*}
	\sum \alpha_q (\cE^{(2q)}_g - (1-\tfrac{n+1}{2q})\lambda^{(2q)}g) -\Phi_{(\alpha, \beta)}(g,t) =0
	\implies
	\sum \alpha_q (\cE^{(2q)}_g - (1-\tfrac{n+1}{2q})\lambda^{(2q)}g) =0
\end{equation*}
\end{lemma}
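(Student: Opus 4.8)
The strategy is to run the maximum-principle argument of Graham-Lee \cite[Lemma 2.2]{Graham-Lee} for both parts at once. The crucial preliminary observation is that the perturbation $\Phi_{(\alpha,\beta)}$ in \eqref{eq:DefPhi} was arranged so that, after applying the gravitation operator $\cG^{(2q)}_g$, it becomes a constant multiple of the Graham-Lee gauge term $\cG^{(2)}_g\delta_g^*\omega$, where $\omega := g\,t^{-1}\delta_g\cG^{(2)}_g(t)$. Writing $\Phi_{(\alpha,\beta)}(g,t) = c_1\,\delta_g^*\omega + c_2\, g\delta_g\omega$ with $c_1,c_2$ read off from \eqref{eq:DefPhi}, the identity $\sum\lambda^{(2q)}\alpha_q(n-2q+1) = -n(n-1)\bA_1(\alpha,1)$ gives $c_1 = \bA_1(\alpha,1)$, or $\bA_1(e_q,1)$ in part (a). In part (b), $\beta_q = -\tfrac1{2q}\alpha_q$ forces $\alpha_q(q-1)+\beta_q(n-1)q = -\tfrac12\alpha_q(n-2q+1)$, hence $c_2 = \tfrac12 c_1$ and $\Phi_{(\alpha,\beta)}(g,t) = c_1\,\cG^{(2)}_g\delta_g^*\omega$ directly. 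In part (a) one does not have $c_2 = \tfrac12 c_1$; instead one applies $\cG^{(2q)}_g$, uses $\cG^{(2q)}_g(\delta_g^*\omega) = \delta_g^*\omega + \tfrac1{2q}(\delta_g\omega)g$ and $\cG^{(2q)}_g(g\delta_g\omega) = (1-\tfrac{n+1}{2q})(\delta_g\omega)g$, and invokes the relation $c_1(1-q) = c_2(n+1-2q)$ (which holds for $(\alpha,\beta)=(e_q,0)$ by direct inspection of \eqref{eq:DefPhi}) to conclude $\cG^{(2q)}_g\Phi_{(e_q,0)}(g,t) = c_1\,\cG^{(2)}_g\delta_g^*\omega$ as well. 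We may assume $c_1 \neq 0$: in part (b), $c_1 = 0$ forces $c_2 = 0$, so $\Phi_{(\alpha,\beta)} \equiv 0$ and there is nothing to prove, while in part (a) $\bA_1(e_q,1) = 0$ only when $2q = n+1$, the degenerate topological case, which we set aside.

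Next I use the second Bianchi identity for Lovelock tensors, $\delta_g\cG^{(2q)}_g(\cR^{(2q)}_g) = \delta_g\cE^{(2q)}_g = 0$. In part (b), the equation $Q_{(\alpha,\beta)}(g,t)=0$ reads $\sum\alpha_q\big(\cE^{(2q)}_g - (1-\tfrac{n+1}{2q})\lambda^{(2q)}g\big) = \Phi_{(\alpha,\beta)}(g,t)$; applying $\delta_g$ and using $\delta_g\cE^{(2q)}_g = 0$ and $\delta_g g = 0$ gives $\delta_g\Phi_{(\alpha,\beta)}(g,t) = 0$. In part (a), applying $\cG^{(2q)}_g$ to $\cR^{(2q)}_g - \lambda^{(2q)}g - \Phi_{(e_q,0)}(g,t)=0$ gives $\cE^{(2q)}_g - (1-\tfrac{n+1}{2q})\lambda^{(2q)}g = \cG^{(2q)}_g\Phi_{(e_q,0)}(g,t)$, and then $\delta_g$ gives $\delta_g\cG^{(2q)}_g\Phi_{(e_q,0)}(g,t) = 0$. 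By the first paragraph, in both cases this says $c_1\,\delta_g\cG^{(2)}_g\delta_g^*\omega = 0$. Finally, the Weitzenb\"ock identity $2\delta_g\delta_g^*\omega = \nabla^*\nabla\omega + d\delta_g\omega - \Ric_g(\omega)$ (with $\nabla^*\nabla$ the rough Laplacian), together with $\delta_g\big((\delta_g\omega)g\big) = -d\delta_g\omega$, gives $\delta_g\cG^{(2)}_g\delta_g^*\omega = \tfrac12\big(\nabla^*\nabla\omega - \Ric_g(\omega)\big)$---the $d\delta_g\omega$ terms cancel---so since $c_1 \neq 0$ we obtain $\nabla^*\nabla\omega = \Ric_g(\omega)$.

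From here the argument is exactly that of \cite{Graham-Lee}. The Bochner formula and $\nabla^*\nabla\omega = \Ric_g(\omega)$ give $\nabla^*\nabla(|\omega|^2_g) = 2\Ric_g(\omega,\omega) - 2|\nabla\omega|^2_g \leq 2K|\omega|^2_g \leq 0$, so $|\omega|^2_g$ is subharmonic on $M$. The estimate \eqref{eq:LeadTerm} together with the hypotheses that $\pa_k\bar g_{ij}$ and $\rho\,\pa_k\pa_s\bar g_{ij}$ be bounded show that $\omega_k$ is bounded in coordinates smooth up to $\pa M$, so $|\omega|^2_g = g^{ij}\omega_i\omega_j = \cO(\rho^2) \to 0$ at $\pa M$; hence the maximum principle forces $\omega \equiv 0$. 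Consequently $\Phi_{(\alpha,\beta)}(g,t) = c_1\delta_g^*\omega + c_2 g\delta_g\omega = 0$: in part (a) the equation reduces to $\cR^{(2q)}_g = \lambda^{(2q)}g$, and in part (b) to $\sum\alpha_q\big(\cE^{(2q)}_g - (1-\tfrac{n+1}{2q})\lambda^{(2q)}g\big) = 0$, as claimed.

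I expect the main obstacle to be the reduction in the first paragraph: verifying that the coefficients in \eqref{eq:DefPhi} are precisely those for which $\cG^{(2q)}_g\Phi_{(e_q,0)}$ (part (a)) and $\Phi_{(\alpha,\beta)}$ itself (part (b)) collapse to a multiple of $\cG^{(2)}_g\delta_g^*\omega$, equivalently for which the $d\delta_g\omega$ term drops out after taking $\delta_g$. This is the only place where the Lovelock structure, as opposed to the Einstein structure treated in \cite{Graham-Lee}, enters the argument; once it is in place, the Bianchi computation, the Weitzenb\"ock identity, and the subharmonicity/maximum-principle step are verbatim the Einstein ones. A secondary technical point, inherited from \cite{Graham-Lee} and the reason a maximum principle rather than an energy identity is used (the relevant integrals need not converge), is the boundary decay $|\omega|_g \to 0$; this uses \eqref{eq:LeadTerm} and, in the application, the fact that $\bar g$ and $\bar t$ agree on $\pa M$.
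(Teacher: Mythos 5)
Your proof is correct and follows essentially the same route as the paper: reduce $\Phi_{(\alpha,\beta)}$ (after applying $\cG^{(2q)}_g$ in part (a)) to a multiple of $\cG^{(2)}_g\delta_g^*\omega$, use the second Bianchi identity to obtain $\delta_g\cG^{(2)}_g\delta_g^*\omega=0$, and conclude $\omega=0$ by the Graham--Lee argument. The only minor deviation is the last step: the paper invokes \cite[Theorem 3.5]{Graham-Lee}, which needs only boundedness of $|\omega|^2_g$ together with $\Delta_g|\omega|^2_g\le 2K|\omega|^2_g$, whereas your appeal to the boundary decay $|\omega|^2_g=\cO(\rho^2)$ quietly uses $\bar g\rest{\pa M}=\bar t\rest{\pa M}$, which is not among the lemma's hypotheses (though it does hold in the application).
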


\begin{proof}
In the pure Lovelock setting, since $\cR^{(2q)}_{g} + \lambda g - \Phi^{(2q)}(g,t)$ vanishes, and 
$\delta_g\cG^{(2q)}_g$ kills the first term by the second Bianchi identity and the second term by the metric property of the connection, we must have
\begin{equation*}
	\delta_g \cG^{(2q)}_g\Phi_{(e_q, 0)}(g,t) =0.
\end{equation*}
Let $\omega$ be the one-form 
$gt^{-1}\delta_g\cG^{(2)}_g (t)$ so that this equation implies the vanishing of
\begin{equation*}
\begin{multlined}
	\delta_g \cG^{(2q)}_g ((n-2q+1) \delta_g^* -(q-1) g \delta_g)\omega \\
	=
	\delta_g \Big( 
	(n-2q+1) \tfrac12(\omega_{i,j}+\omega_{j,i} - \tfrac1q g_{ij} g^{st}\omega_{s,t})
	+(q-1)(1-\tfrac m{2q})g_{ij} g^{st}\omega_{s,t} \Big) \\
	=
	\frac{(n-2q+1)}2 \delta_g  
	(\omega_{i,j}+\omega_{j,i} - g_{ij} g^{st}\omega_{s,t})
	= 
	\frac{(n-2q+1)}2 \delta_g  \cG^{(2)}_g\delta_g^*\omega.
\end{multlined}
\end{equation*}

When $\beta_q = -\tfrac1{2q}\alpha_q,$ we have
\begin{equation*}
	\Phi_{(\alpha,\beta)}(g,t)
	= - \bA_1(\alpha, \kappa)(\delta_g^* + \tfrac12g\delta_g)\omega,
\end{equation*}
and 
the second Bianchi identity implies that
\begin{equation*}
\begin{gathered}
	0 
	= \delta_g
	\Big(
	\sum \alpha_q (\cE^{(2q)}_g - (1-\tfrac{n+1}{2q})\lambda^{(2q)}g) -\Phi_{(\alpha, \beta)}(g,t) \Big) \\
	= \bA_1(\alpha, \kappa)\delta_g(\delta_g^* + \tfrac12g\delta_g)\omega
	= \bA_1(\alpha, \kappa)\delta_g\cG^{(2)}_g\delta_g^*\omega.
\end{gathered}
\end{equation*}

In either case we have
\begin{equation*}
	\delta_g\cG^{(2)}_g\delta_g^*\omega =0,
\end{equation*}
just as in the proof of \cite[Lemma 2.2]{Graham-Lee}.
As explained there, this implies that $|\omega|^2_g$ is bounded and $\Delta_g |\omega|^2_g \leq 2K|\omega|^2_g$
so \cite[Theorem 3.5]{Graham-Lee} implies $\omega=0.$
\end{proof}

As a corollary of Theorem \ref{thm:GL41} and Lemma \ref{lem:GL22} we obtain Theorem \ref{thm:GLA}.

%

\begin{thebibliography}{CESdS13}

\bibitem[AK16]{Aksteiner2016}
Steffen Aksteiner and Yegor Korovin, \emph{New modes from higher curvature
  corrections in holography}, Journal of High Energy Physics \textbf{2016}
  (2016), no.~3, 166.

\bibitem[Alb07]{Albin:RenInd}
Pierre Albin, \emph{A renormalized index theorem for some complete
  asymptotically regular metrics: the {G}auss-{B}onnet theorem}, Adv. Math.
  \textbf{213} (2007), no.~1, 1--52.

\bibitem[Alb09]{Albin:RenInt}
\bysame, \emph{Renormalizing curvature integrals on {P}oincar\'{e}-{E}instein
  manifolds}, Adv. Math. \textbf{221} (2009), no.~1, 140--169.

\bibitem[AM09a]{Albin-Melrose:Fred1}
Pierre Albin and Richard Melrose, \emph{Fredholm realizations of elliptic
  symbols on manifolds with boundary}, J. Reine Angew. Math. \textbf{627}
  (2009), 155--181.

\bibitem[AM09b]{Albin-Melrose:RelChern}
\bysame, \emph{Relative {C}hern character, boundaries and index formulas}, J.
  Topol. Anal. \textbf{1} (2009), no.~3, 207--250.

\bibitem[AW18]{Alaee-Woolgar}
Aghil Alaee and Eric Woolgar, \emph{Formal power series for asymptotically
  hyperbolic {B}ach-flat metrics}, available online at arXiv:1809.06338, 2018.

\bibitem[All40]{Allendoerfer}
Carl~B. Allendoerfer, \emph{The {E}uler number of a {R}iemann manifold}, Amer.
  J. Math. \textbf{62} (1940), 243--248.

\bibitem[AW43]{Allendoerfer-Weil}
Carl~B. Allendoerfer and Andr\'{e} Weil, \emph{The {G}auss-{B}onnet theorem for
  {R}iemannian polyhedra}, Trans. Amer. Math. Soc. \textbf{53} (1943),
  101--129.

\bibitem[Bes08]{Besse}
Arthur~L. Besse, \emph{Einstein manifolds}, Classics in Mathematics,
  Springer-Verlag, Berlin, 2008, Reprint of the 1987 edition.

\bibitem[CE10]{Camanho2010}
Xi{\'a}n~O. Camanho and Jos{\'e}~D. Edelstein, \emph{Causality in {A}d{S}/{CFT}
  and {L}ovelock theory}, Journal of High Energy Physics \textbf{2010} (2010),
  no.~6, 99.

\bibitem[CESdS13]{Camanho2013}
Xi{\'a}n~O. Camanho, Jos{\'e}~D. Edelstein, and Jos{\'e}~M. S{\'a}nchez~de
  Santos, \emph{Lovelock theory and the {A}d{S}/{CFT} correspondence}, General
  Relativity and Gravitation \textbf{46} (2013), no.~1, 1637.

\bibitem[CdLS13]{CaulaDeLimaSantos}
Tiago Ca\'ula, Levi~Lopes de~Lima, and Newton~Luis Santos, \emph{Deformation
  and rigidity results for the {$2k$}-{R}icci tensor and the
  {$2k$}-{G}auss-{B}onnet curvature}, Math. Nachr. \textbf{286} (2013),
  no.~17-18, 1752--1777.

\bibitem[CQY06]{Chang-Qing-Yang}
Sun-Yung~A. Chang, Jie Qing, and Paul Yang, \emph{Renormalized volumes for
  conformally compact {E}instein manifolds}, Sovrem. Mat. Fundam. Napravl.
  \textbf{17} (2006), 129--142.

\bibitem[CMS82]{Cheeger-Muller-Schrader:Lattice}
Jeff Cheeger, Werner M\"{u}ller, and Robert Schrader, \emph{Lattice gravity or
  {R}iemannian structure on piecewise linear spaces}, Unified theories of
  elementary particles ({M}unich, 1981), Lecture Notes in Phys., vol. 160,
  Springer, Berlin, 1982, pp.~176--188.

\bibitem[CMS84]{Cheeger-Muller-Schrader:Curvature}
\bysame, \emph{On the curvature of
  piecewise flat spaces}, Comm. Math. Phys. \textbf{92} (1984), no.~3,
  405--454.

\bibitem[CMS86]{Cheeger-Muller-Schrader:Kinematic}
\bysame, \emph{Kinematic and tube formulas
  for piecewise linear spaces}, Indiana Univ. Math. J. \textbf{35} (1986),
  no.~4, 737--754.

\bibitem[Che66]{Chern:Kinematic}
Shiing-shen Chern, \emph{On the kinematic formula in integral geometry}, J.
  Math. Mech. \textbf{16} (1966), 101--118.

\bibitem[CGGLO18]{Chernicoff:Q}
Mariano Chernicoff, Gaston Giribet, Nicol\'as Grandi, Edmundo Lavia, and Julio
  Oliva, \emph{{$Q$} curvature and gravity}, Phys. Rev. D \textbf{98} (2018),
  104023.

\bibitem[CDLS05]{Chrusciel-Delay-Lee-Skinner}
Piotr~T. Chru\'{s}ciel, Erwann Delay, John~M. Lee, and Dale~N. Skinner,
  \emph{Boundary regularity of conformally compact {E}instein metrics}, J.
  Differential Geom. \textbf{69} (2005), no.~1, 111--136.

\bibitem[dBKP10]{deBoer2010}
Jan de~Boer, Manuela Kulaxizi, and Andrei Parnachev, \emph{{A}d{S}7/{CFT}6,
  {G}auss-{B}onnet gravity, and viscosity bound}, Journal of High Energy
  Physics \textbf{2010} (2010), no.~3, 87.

\bibitem[dHSS01]{deHaroSkenderisSolodukhin}
Sebastian de~Haro, Sergey~N. Solodukhin, and Kostas Skenderis,
  \emph{{Holographic reconstruction of space-time and renormalization in the
  AdS / CFT correspondence}}, Commun. Math. Phys. \textbf{217} (2001),
  595--622.

\bibitem[dLS10]{DeLimaSantos:Defs}
Levi~Lopes de~Lima and Newton~Lu{\'i}s Santos, \emph{Deformations of
  {$2k$}-{E}instein structures}, J. Geom. Phys. \textbf{60} (2010), no.~9,
  1279--1287.

\bibitem[DGH08]{Djadli-Guillarmou-Herzlich}
Zindine Djadli, Colin Guillarmou, and Marc Herzlich, \emph{Op\'{e}rateurs
  g\'{e}om\'{e}triques, invariants conformes et vari\'{e}t\'{e}s
  asymptotiquement hyperboliques}, Panoramas et Synth\`eses [Panoramas and
  Syntheses], vol.~26, Soci\'{e}t\'{e} Math\'{e}matique de France, Paris, 2008.

\bibitem[FG85]{Fefferman-Graham:Conf}
Charles Fefferman and C.~Robin Graham, \emph{Conformal invariants},
  Ast\'erisque (1985), no.~Num\'ero Hors S\'erie, 95--116, The mathematical
  heritage of \'Elie Cartan (Lyon, 1984).

\bibitem[FG02]{Fefferman-Graham:Q}
\bysame, \emph{{$Q$}-curvature and {P}oincar\'e metrics}, Math. Res. Lett.
  \textbf{9} (2002), no.~2-3, 139--151.

\bibitem[FG12]{Fefferman-Graham:Ambient}
\bysame, \emph{The ambient metric}, Annals of Mathematics Studies, vol. 178,
  Princeton University Press, Princeton, NJ, 2012.

\bibitem[Gra00]{Graham:Vol}
C.~Robin Graham, \emph{Volume and area renormalizations for conformally compact
  {E}instein metrics}, The {P}roceedings of the 19th {W}inter {S}chool
  ``{G}eometry and {P}hysics'' ({S}rn\'\i , 1999), no.~63, 2000, pp.~31--42.

\bibitem[GH05]{Graham-Hirachi:Obst}
C.~Robin Graham and Kengo Hirachi, \emph{The ambient obstruction tensor and
  {$Q$}-curvature}, Ad{S}/{CFT} correspondence: {E}instein metrics and their
  conformal boundaries, IRMA Lect. Math. Theor. Phys., vol.~8, Eur. Math. Soc.,
  Z\"{u}rich, 2005, pp.~59--71.

\bibitem[GJMS92]{GJMS}
C.~Robin Graham, Ralph Jenne, Lionel~J. Mason, and George A.~J. Sparling,
  \emph{Conformally invariant powers of the {L}aplacian. {I}. {E}xistence}, J.
  London Math. Soc. (2) \textbf{46} (1992), no.~3, 557--565.

\bibitem[GL91]{Graham-Lee}
C.~Robin Graham and John~M. Lee, \emph{Einstein metrics with prescribed
  conformal infinity on the ball}, Adv. Math. \textbf{87} (1991), no.~2,
  186--225.

\bibitem[GZ03]{Graham-Zworski}
C.~Robin Graham and Maciej Zworski, \emph{Scattering matrix in conformal
  geometry}, Invent. Math. \textbf{152} (2003), no.~1, 89--118. \MR{1965361}

\bibitem[Gra04]{Gray:Tubes}
Alfred Gray, \emph{Tubes}, second ed., Progress in Mathematics, vol. 221,
  Birkh\"{a}user Verlag, Basel, 2004, With a preface by Vicente Miquel.

\bibitem[GKP98]{Gubser-Klebanov-Polyakov}
S.~S. Gubser, I.~R. Klebanov, and A.~M. Polyakov, \emph{Gauge theory
  correlators from non-critical string theory}, Phys. Lett. B \textbf{428}
  (1998), no.~1-2, 105--114.

\bibitem[Gui05]{Guillarmou:Mero}
Colin Guillarmou, \emph{Meromorphic properties of the resolvent on
  asymptotically hyperbolic manifolds}, Duke Math. J. \textbf{129} (2005),
  no.~1, 1--37.

\bibitem[GZ95]{Guillope-Zworksi}
Laurent Guillop\'{e} and Maciej Zworski, \emph{Polynomial bounds on the number
  of resonances for some complete spaces of constant negative curvature near
  infinity}, Asymptotic Anal. \textbf{11} (1995), no.~1, 1--22.

\bibitem[HS98]{Henningson-Skenderis}
M.~Henningson and K.~Skenderis, \emph{The holographic {W}eyl anomaly}, J. High
  Energy Phys. (1998), no.~7, Paper 23, 12.

\bibitem[Hit74]{Hitchin:Einstein}
Nigel Hitchin, \emph{Compact four-dimensional {E}instein manifolds}, J.
  Differential Geometry \textbf{9} (1974), 435--441.

\bibitem[Hot39]{Hotelling}
Harold Hotelling, \emph{Tubes and {S}pheres in n-{S}paces, and a {C}lass of
  {S}tatistical {P}roblems}, Amer. J. Math. \textbf{61} (1939), no.~2,
  440--460.

\bibitem[ISTY00]{ImbimboSchwimmerTheisenYankielowicz}
C~Imbimbo, A~Schwimmer, S~Theisen, and S~Yankielowicz, \emph{Diffeomorphisms
  and holographic anomalies}, Classical and Quantum Gravity \textbf{17} (2000),
  no.~5, 1129--1138.

\bibitem[JSB00]{Joshi-SaBarreto}
Mark~S. Joshi and Ant\^{o}nio S\'{a}~Barreto, \emph{Inverse scattering on
  asymptotically hyperbolic manifolds}, Acta Math. \textbf{184} (2000), no.~1,
  41--86.

\bibitem[Juh09]{Juhl:Book}
Andreas Juhl, \emph{Families of conformally covariant differential operators,
  {$Q$}-curvature and holography}, Progress in Mathematics, vol. 275,
  Birkh\"auser Verlag, Basel, 2009.

\bibitem[KO07]{Kofinas-Olea}
Georgios Kofinas and Rodrigo Olea, \emph{Universal regularization prescription
  for {L}ovelock {A}d{S} gravity}, Journal of High Energy Physics \textbf{2007}
  (2007), no.~11, 069.

\bibitem[Kul72]{Kulkarni}
Ravindra~S. Kulkarni, \emph{On the {B}ianchi {I}dentities}, Math. Ann.
  \textbf{199} (1972), 175--204.

\bibitem[Lab05]{Labbi:pq}
Mohammed~Larbi Labbi, \emph{Double forms, curvature structures and the
  {$(p,q)$}-curvatures}, Trans. Amer. Math. Soc. \textbf{357} (2005), no.~10,
  3971--3992.

\bibitem[Lab07]{Labbi:OnGB}
\bysame, \emph{On {G}auss-{B}onnet curvatures}, SIGMA Symmetry Integrability
  Geom. Methods Appl. \textbf{3} (2007), Paper 118, 11.

\bibitem[Lab08]{Labbi:Var}
\bysame, \emph{Variational properties of the {G}auss-{B}onnet curvatures},
  Calc. Var. Partial Differential Equations \textbf{32} (2008), no.~2,
  175--189.

\bibitem[Lab10]{Labbi:OnGr}
\bysame, \emph{On generalized {E}instein metrics}, Balkan J. Geom. Appl.
  \textbf{15} (2010), no.~2, 69--77.

\bibitem[Lab14]{Labbi:Rmks}
\bysame, \emph{Remarks on {B}ianchi sums and {P}ontrjagin classes}, J. Aust.
  Math. Soc. \textbf{97} (2014), no.~3, 365--382.

\bibitem[Lab15]{Labbi:Weit}
\bysame, \emph{On {W}eitzenb\"ock curvature operators}, Math. Nachr.
  \textbf{288} (2015), no.~4, 402--411.

\bibitem[Lov71]{Lovelock}
David Lovelock, \emph{The {E}instein tensor and its generalizations}, J.
  Mathematical Phys. \textbf{12} (1971), 498--501.

\bibitem[Mal98]{Maldacena}
Juan Maldacena, \emph{The large {$N$} limit of superconformal field theories
  and supergravity}, Adv. Theor. Math. Phys. \textbf{2} (1998), no.~2,
  231--252.

\bibitem[Maz88]{Mazzeo:Hodge}
Rafe Mazzeo, \emph{The {H}odge cohomology of a conformally compact metric}, J.
  Differential Geom. \textbf{28} (1988), no.~2, 309--339.

\bibitem[MM87]{Mazzeo-Melrose:Zero}
Rafe Mazzeo and Richard~B. Melrose, \emph{Meromorphic extension of the
  resolvent on complete spaces with with asymptotically negative curvature}, J.
  Funct. Anal. (1987), 260--310.

\bibitem[PR88]{Penrose-Rindler}
Roger Penrose and Wolfgang Rindler, \emph{Spinors and space-time. {V}ol. 2},
  second ed., Cambridge Monographs on Mathematical Physics, Cambridge
  University Press, Cambridge, 1988, Spinor and twistor methods in space-time
  geometry.

\bibitem[Ske01]{Skenderis:AAdS}
Kostas Skenderis, \emph{Asymptotically {A}nti-de {S}itter spacetimes and their
  stress energy tensor}, International Journal of Modern Physics A \textbf{16}
  (2001), no.~05, 740--749.

\bibitem[STvR09]{Skenderis-Taylor-vanRees}
Kostas Skenderis, Marika Taylor, and Balt~C. van Rees, \emph{Topologically
  massive gravity and the {A}d{S}/{CFT} correspondence}, Journal of High Energy
  Physics \textbf{2009} (2009), no.~09, 045.

\bibitem[ST13]{Smolic2013}
Jelena Smolic and Marika Taylor, \emph{Higher derivative effects for 4d {A}d{S}
  gravity}, Journal of High Energy Physics \textbf{2013} (2013), no.~6, 96.

\bibitem[Tho69]{Thorpe:GB}
John~A. Thorpe, \emph{Some remarks on the {G}auss-{B}onnet integral}, J. Math.
  Mech. \textbf{18} (1969), 779--786.

\bibitem[Vas13]{Vasy:AH}
Andr\'{a}s Vasy, \emph{Microlocal analysis of asymptotically hyperbolic and
  {K}err-de {S}itter spaces (with an appendix by {S}emyon {D}yatlov)}, Invent.
  Math. \textbf{194} (2013), no.~2, 381--513.

\bibitem[Wey39]{Weyl:Tubes}
Hermann Weyl, \emph{On the {V}olume of {T}ubes}, Amer. J. Math. \textbf{61}
  (1939), no.~2, 461--472.

\bibitem[Wit98]{Witten:AdS}
Edward Witten, \emph{Anti de {S}itter space and holography}, Adv. Theor. Math.
  Phys. \textbf{2} (1998), no.~2, 253--291.

\end{thebibliography}

\end{document}